\theoremstyle{plain}
\newtheorem*{mainresult}{Main Result}
\newtheorem{theorem}{Theorem}[section]
\newtheorem{proposition}[theorem]{Proposition}
\newtheorem{lemma}[theorem]{Lemma}
\newtheorem{corollary}[theorem]{Corollary}
\newtheorem{assumption}[theorem]{Assumption}
\theoremstyle{definition}
\newtheorem{remark}[theorem]{Remark}
\newtheorem{example}[theorem]{Example}
\numberwithin{equation}{section}
\newcommand{\linspan}{\mathop{\rm span}\nolimits}
\newcommand{\sign}{\mathop{\rm sign}\nolimits}
\newcommand{\rest}{\left.\kern-2\nulldelimiterspace\right|_}
\newcommand{\norm}[2]{\left|#1\right|_{#2}}
\newcommand{\dnorm}[2]{\left\|#1\right\|_{#2}}
\newcommand{\Id}{{\mathbf1}}
\newcommand{\indf}{1}
\newcommand{\ex}{\mathrm{e}}
\newcommand{\p}{\partial}
\newcommand{\ed}{\mathrm d}
\newcommand{\e}{\varepsilon}
\newcommand*{\Bigcdot}{\raisebox{-.25ex}{\scalebox{1.25}{$\cdot$}}}
\newcommand{\clA}{{\mathcal A}}
\newcommand{\clC}{{\mathcal C}}
\newcommand{\clE}{{\mathcal E}}
\newcommand{\clG}{{\mathcal G}}
\newcommand{\clI}{{\mathcal I}}
\newcommand{\clL}{{\mathcal L}}
\newcommand{\clR}{{\mathcal R}}
\newcommand{\clS}{{\mathcal S}}
\newcommand{\clV}{{\mathcal V}}
\newcommand{\clX}{{\mathcal X}}
\newcommand{\bbN}{{\mathbb N}}
\newcommand{\bbQ}{{\mathbb Q}}
\newcommand{\bbR}{{\mathbb R}}
\newcommand{\bbT}{{\mathbb T}}
\newcommand{\fkB}{{\mathfrak B}}
\newcommand{\fkD}{{\mathfrak D}}
\newcommand{\fkH}{{\mathfrak H}}
\newcommand{\fkK}{{\mathfrak K}}
\newcommand{\fkN}{{\mathfrak N}}
\newcommand{\fkR}{{\mathfrak R}}
\newcommand{\fkS}{{\mathfrak S}}
\newcommand{\fkT}{{\mathfrak T}}
\newcommand{\fkY}{{\mathfrak Y}}
\newcommand{\rmD}{{\mathrm D}}
\newcommand{\bfn}{{\mathbf n}}
\newcommand{\bft}{{\mathbf t}}
\newcommand{\bfv}{{\mathbf v}}
\newcommand{\rmd}{{\mathrm d}}
\newcommand{\rme}{{\mathrm e}}
\newcommand{\fkv}{{\mathfrak v}}
\newcommand{\ovlineC}[1]{\overline C_{\left[#1\right]}}
\definecolor{DarkBlue}{rgb}{0,0.08,0.45}
\definecolor{DarkRed}{rgb}{.65,0,0}
\definecolor{applegreen}{rgb}{0.55, 0.71, 0.0}
\newcounter{mymac@matlab}
\newcommand{\matlab}{MATLAB%
   \ifnum\value{mymac@matlab}<1%
   \textregistered%
   \setcounter{mymac@matlab}{1}%
   \fi%
  }
\newcommand{\black}{ \color{black} }
\begin{document}
\title{Stabilization of nonautonomous parabolic equations by a single moving actuator}
\author{Behzad Azmi$^{\tt1}$}
\author{Karl Kunisch$^{\tt1,\tt2}$}
\author{S\'ergio S.~Rodrigues$^{\tt1}$}

 \thanks{
\vspace{-1em}\newline\noindent
{\sc MSC2020}: 93C05, 93C10, 93C20, 93D20
\newline\noindent
{\sc Keywords}: {moving actuator, switching control, projection based feedback, relaxation metric, receding horizon control}
\newline\noindent
$^{\tt1}$ Johann Radon Institute for Computational and Applied Mathematics,
  Altenbergerstr. 69, 4040 Linz, Austria.
  \newline\noindent
 $^{\tt2}$ Institute for Mathematics and Scientific Computing, University of Graz, Heinrichstr. 36, 8010 Graz, Austria.
    \newline\noindent
  {\sc Emails}:
 ({\tt behzad.azmi,sergio.rodrigues)@ricam.oeaw.ac.at},\quad{\tt  karl.kunisch@uni-graz.at}.
}

\begin{abstract}
It is shown that an internal control based on a moving indicator function
is able to stabilize the state of parabolic equations evolving in rectangular
domains. For proving the stabilizability result, we start with a control obtained
from an oblique projection feedback based on a finite number of static actuators,
then we used the continuity of the state when the control varies in relaxation metric to construct a switching control where
at each given instant of time only one of the static actuators is active, finally we construct the moving control by
traveling between the static actuators.

Numerical computations are performed by a concatenation
procedure following a receding horizon control approach. They confirm the stabilizing performance of the moving control. 
\end{abstract}

\maketitle

%

\pagestyle{myheadings} \thispagestyle{plain} \markboth{\sc B. Azmi, K. Kunisch, and S. S.
Rodrigues}{\sc A single moving stabilizing actuator}

\section{Introduction}

Stabilizability of controlled parabolic-like equations of the form
 \begin{align}\label{sys-y-Intro}
 \dot y+Ay+A_{\rm rc}(t)y=u(t)\Phi(t),\qquad y(0)=y_0,\qquad t>0,
\end{align}
where the state evolves in a Hilbert space~$H$, that is,~$y(t)\in H$ for all~$t\ge0$ is investigated.
The pair~$(u,\Phi)$, with
$u(t)\in\bbR$ and~$\Phi(t)\in H$, with $\norm{\Phi(t)}{H}=1$, is at our disposal.
We shall look for a continuous function~$\Phi\colon[0,+\infty)\to H$, where ~$\Phi(t)$
represents the actuator moving on a compact subset of the unit sphere~$\fkS_H$ in~$H$.

Under suitable assumptions on the operators~$A$ and~$A_{\rm rc}$, to be specified later, and under a suitable stabilizability assumption by means of a finite (possibly large) number of static/fixed actuators the main result of this manuscript is the following. 
\begin{mainresult}
There exist a (signed) magnitude control function~$u=u(t)$ and a continuous
moving actuator~$\Phi=\Phi(t)$ satisfying
\begin{align}
 &u\in L^2((0,+\infty),\bbR),\qquad \Phi(t)\in\fkS_H\quad\mbox{for}\quad t\ge0,\notag\\
& \dot\Phi\in L^\infty((0,+\infty),H){\,\textstyle\bigcap\,}\clC([0,+\infty),H),
 \quad \ddot\Phi\in L^\infty((0,+\infty),H),\notag
\end{align}
and constants~$C\ge1$ and $\mu>0$, such
 that the solution of the system~\eqref{sys-y-Intro}
 satisfies
\begin{align}
 &\norm{y(t)}{H}\le C\ex^{-\mu t}\norm{y_0}{H},\quad\mbox{for all}\quad t\ge0,\label{goal-Intro}
 \intertext{and the mapping~$y_0\mapsto u(y_0)$ is continuous,}
 &\norm{u}{\clC( H,L^2(\bbR_0,\bbR))}<+\infty.\label{goal-Intro-L2}
\end{align}
 \end{mainresult}

Note that, in particular, the  actuator moves in a regular way, with continuous  ``velocity''~$\dot\Phi$, which is meaningful from the applications/physical point of view.

The precise statement of Main Result is given in Corollary~\ref{C:main1}.

\subsection{Example}
As an illustration we consider a parabolic equation whose state evolves in~$H=L^2(\Omega)$, with~$\Omega\subset\bbR^d$, $d\in\{1,2,3\}$,
 a regular bounded domain.
\begin{align}\label{sys-y-parab-Intro}
 \dot y-\nu\Delta y+ay+b\cdot\nabla y=u
 \widehat\indf_{\omega(c)},\quad\clG y\rest{\Gamma}=0,\quad y(0,\Bigcdot)=y_0,
\end{align}
where ~$y=y(t,x)\in\bbR$, $y(t,\Bigcdot)\in L^2(\Omega)$,
$a=a(t,x)\in\bbR$, $b=b(t,x)\in\bbR^d$, and~$\clG$ denotes either Dirichlet or Neumann conditions
on the boundary~$\Gamma$ of~$\Omega$, i.e.  $\clG y\rest{\Gamma}=y(t,\bar x)$ or~$\clG y\rest{\Gamma}=\bfn(\bar x)\cdot\nabla y(t,\bar x)$,
where~$\bfn(\bar x)$ stands for the unit outward vector normal at~$\overline x\in\Gamma$.

We shall apply the abstract Main Result to the more concrete system~\ref{sys-y-parab-Intro}, after writing the later in the form~\eqref{sys-y-Intro}. For this purpose it will be enough to take the operators~$A=-\nu\Delta+\Id$ and~$A_{\rm rc}(t)=(a(t,\Bigcdot)-1)\Id+b(t,\Bigcdot)\cdot\nabla$, and the  actuator chosen  as ~$\Phi(t)=\widehat\indf_{\omega(c(t))}$, where~
$\widehat\indf_{\omega(c(t))}$ denotes  the normalized indicator function whose support is the
rectangle~$\overline{\omega(c(t))}$. This rectangle $\omega(c(t))\coloneqq c(t)+\omega_0\subset\Omega$ is the  translation of
a rectangular reference domain~$\omega_0\subset\bbR^d$, with~$0\in\omega_0$, and~$\dnorm{\omega_0}{}\coloneqq\int_{\omega_0}1\,\ed\bbR^d$. Then
\[
 \widehat\indf_{\omega(c(t))}(x)\coloneqq\begin{cases}
                                          \dnorm{\omega_0}{}^{-\frac12},\;&\mbox{if}\quad x\in\omega(c(t)),\\
                                          0,\;&\mbox{if}\quad x\notin\overline{\omega(c(t))},
                                         \end{cases}\qquad\mbox{and}\qquad\norm{\widehat\indf_{\omega(c(t))}}{L^2(\Omega)}=1.
\]

To simplify the exposition let us also assume that~$0$ is the center of mass of~$\omega_0$, so that we can simply say that
$0$ is the center of~$\omega_0$. Since~$c(t)\in\omega(c(t))$, this  justifies to call ~$c(t)\in\bbR^d$  the {\em center} of the actuator.
Hence the motion of the actuator~$\Phi(t)=\widehat\indf_{\omega(c(t))}$ is  described  by the center of~$\omega(c(t))$.
See Figure~\ref{fig.movingAct}, where we have taken~$\omega_0\subset\bbR^d$ as a small rectangular domain.

\setlength{\unitlength}{.003\textwidth}
\newsavebox{\Rectfw}%
\savebox{\Rectfw}(0,0){%
\linethickness{1.5pt}
{\color{black}\polygon(0,0)(120,0)(120,80)(0,80)(0,0)}%
}%
\newsavebox{\Rectfg}%
\savebox{\Rectfg}(0,0){%
{\color{lightgray}\polygon*(0,0)(120,0)(120,80)(0,80)(0,0)}%
}%

\newsavebox{\Rectref}%
\savebox{\Rectref}(0,0){%
{\color{white}\polygon*(0,0)(120,0)(120,80)(0,80)(0,0)}%
{\color{lightgray}\polygon*(45,30)(75,30)(75,50)(45,50)(45,30)}%
}%

\newsavebox{\Circref}%
\savebox{\Circref}(0,0){%
{\color{lightgray}\circle*{1}}%
}

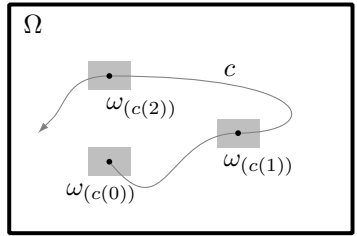
\begin{figure}[th!]
\begin{center}
\begin{picture}(500,100)
 \put(100,0){\usebox{\Rectfw}}%
 \put(105,5){\scalebox{.5}{\usebox{\Rectref}}}
 \put(150,15){\scalebox{.5}{\usebox{\Rectref}}}
 \put(105,35){\scalebox{.5}{\usebox{\Rectref}}}

 {\color{gray}
 \moveto(110,15)%
 \cbezier(135,25)(155,0)(160,35)(180,35)%
 \cbezier(180,35)(210,35)(200,55)(135,55)%
 \cbezier(135,55)(120,55)(120,45)(115,40)%
  \put(115,40){\vector(-1,-1){5}}
 }
 \put(135,25){\circle*{2}}
 \put(180,35){\circle*{2}}
 \put(135,55){\circle*{2}}

 \put(120,13.5){$\omega_{(c(0))}$}
 \put(175,23.5){$\omega_{(c(1))}$}
 \put(134,43.5){$\omega_{(c(2))}$}

 \put(175,55){$c$}

 \put(105,70){$\Omega$}
\end{picture}
\end{center}
\caption{An internal moving actuator with support~$\overline{\omega(c(t))}\subset\overline{\Omega}$.} \label{fig.movingAct}
\end{figure}

The main result of this paper, when applied to~\eqref{sys-y-parab-Intro},
implies the following Theorem~\ref{T:main-Intro}, concerning parabolic equations evolving in the bounded rectangular domain
\begin{subequations}\label{subsetsOmega-Intro}
\begin{align}
 \Omega&\coloneqq\bigtimes_{n=1}^d(0,L_n)\subset\bbR^d,\qquad L\coloneqq (L_1,L_2,\dots,L_d)\in(0,+\infty)^d\subset\bbR^d.
 \label{subsetsOmega-Intro.0}
\intertext{For any given~$r\in[0,1]$ we further  define the subsets}
 r\Omega&\coloneqq\bigtimes_{n=1}^d(0,rL_n)\subset\Omega,
 \qquad(1-r)\Omega+\tfrac{r}2L\coloneqq \bigtimes_{n=1}^d(\tfrac{r}{2}L_n,L_n-\tfrac{r}{2}L_n)
 \subset\Omega,\\
 \omega_0&\coloneqq r\Omega-\tfrac{r}{2}L=\bigtimes_{n=1}^d(-\tfrac{r}{2}L_n,\tfrac{r}{2}L_n).
\end{align}
\end{subequations}
 Observe that~$c+\omega_0\subset\Omega$ if, and only if, $c\in(1-r)\Omega+\tfrac{r}2L$.

\begin{theorem}\label{T:main-Intro}
 Let~$\Omega$ be a bounded
 rectangular domain  as in~\eqref{subsetsOmega-Intro.0}, and let
 $a\in L^\infty((0,+\infty)\times\Omega,\bbR)$ and~$b\in
 L^\infty((0,+\infty)\times\Omega,\bbR^d)$. Then for each  sufficiently small~$r\in(0,1)$, and
 for each  initial state~$y_0\in L^2(\Omega)$, each initial
 actuator position~$c(0)=c_0\in(1-r)\Omega+\tfrac{r}2L$ with initial actuator velocity~$\dot c(0)=0\in\bbR^d$,
 there exists an actuator motion function~$c$ and a magnitude control function~$u$, with
 \[
  \black    c(t)\in(1-r)\Omega+\tfrac{r}2L,\qquad\mbox{and}\qquad u(t)\in\bbR
 \]
such that the solution of~\eqref{sys-y-parab-Intro}, with
 \[
  \omega(c(t))\coloneqq r\Omega+c(t)-\tfrac{r}{2}L=\bigtimes_{n=1}^d(c_n(t)-\tfrac{r}{2}L_n,c_n(t)+\tfrac{r}{2}L_n)
 \]
 satisfies
\begin{subequations}\label{goal.parab-Intro}
 \begin{align}\label{goal.parab-Intro-exp}
 &\norm{ y(t,\Bigcdot)}{L^2(\Omega)}\le C\ex^{-\mu t}\norm{y_0}{L^2(\Omega)},\qquad\mbox{for all}\qquad t\ge0,
 \intertext{with}
 &u\in L^2((0,+\infty),\bbR),\quad\dot c\in L^\infty((0,+\infty),\bbR^d),\quad\mbox{and}\quad
 \ddot c\in L^\infty((0,+\infty),\bbR^d).\label{goal.parab-Intro-L2}
\end{align}
\end{subequations}
Furthermore, the mapping~$y_0\mapsto u(y_0)$ is continuous from~$L^2(\Omega)$
into~$L^2((0,+\infty),\bbR)$ with  $\norm{u(y_0)}{L^2((0,+\infty),\bbR)}\le C_u\norm{y_0}{H}$ and
$\norm{\dot{c}}{L^\infty(\bbR_0,\bbR^M)}+\norm{\ddot{c}}{L^\infty(\bbR_0,\bbR^M)}\le C_c$. Above
the constants $C$, $C_u$, $C_c$, and  $\mu>0$ are independent  of $y_0$ and $c_0$.
\end{theorem}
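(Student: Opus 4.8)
The goal is to reduce Theorem~\ref{T:main-Intro} to the abstract Main Result (Corollary~\ref{C:main1}), so the plan is essentially a verification that the concrete parabolic system~\eqref{sys-y-parab-Intro} on the rectangular domain fits the abstract hypotheses. First I would fix $A=-\nu\Delta+\Id$ with the prescribed (Dirichlet or Neumann) boundary conditions; this is a positive self-adjoint operator on $H=L^2(\Omega)$ with compact resolvent, so it is diagonalizable and admits fractional powers, and $A_{\rm rc}(t)=(a(t,\Bigcdot)-1)\Id+b(t,\Bigcdot)\cdot\nabla$ is, for each fixed $t$, relatively bounded with respect to $A^{1/2}$ with a bound uniform in $t$ thanks to $a,b\in L^\infty$; this is the standard setting in which the perturbed operator generates an analytic semigroup and the required parabolic estimates hold. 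One then needs to confirm that $(A,A_{\rm rc})$ satisfies whatever structural/measurability assumptions the abstract theorem imposes (analyticity, the time-regularity of $t\mapsto A_{\rm rc}(t)$ in the appropriate operator topology, and the a priori bounds), which follows directly from $a,b\in L^\infty((0,+\infty)\times\Omega)$.

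Second, I would verify the finite-static-actuator stabilizability assumption that the abstract Main Result presupposes. Here the domain being a box is essential: for a rectangle one has an explicit eigenbasis of $A$ given by products of sines/cosines, and one must exhibit, for $r$ small enough, a finite family of normalized indicator functions $\widehat\indf_{\omega(c_j)}$, with centers $c_j\in(1-r)\Omega+\tfrac r2 L$, whose span (after oblique projection onto the unstable eigenspace $E_N=\linspan\{\phi_1,\dots,\phi_N\}$ of the uncontrolled operator) is all of $E_N$. The natural argument is: (i) choose $N$ so that the spectrum beyond $\lambda_{N+1}$ gives the desired decay rate $\mu$; (ii) show that for sufficiently small $r$ the indicator functions, suitably placed, have full-rank projection onto $E_N$ — this is a transversality/nondegeneracy statement for the Gram-type matrix $(\langle \widehat\indf_{\omega(c_j)},\phi_i\rangle)_{i,j}$, which for a box can be made explicit and reduces to showing that products of the one-dimensional integrals $\int_{c_n-rL_n/2}^{c_n+rL_n/2}\phi^{(n)}_i$ do not all vanish; as $r\to0$ these behave like $rL_n\,\phi^{(n)}_i(c_n)$ up to higher order, and since a finite set of eigenfunctions cannot share a common zero of all their products, appropriate centers $c_j$ can be selected so the matrix is nonsingular. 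This yields the oblique projection feedback with the claimed continuity $y_0\mapsto u(y_0)$ and the exponential decay, for the multi-actuator system.

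Third, having the abstract Main Result in force, I would simply translate its conclusions back to the parabolic setting: the continuous moving actuator $\Phi(t)=\widehat\indf_{\omega(c(t))}$ corresponds to a continuous center motion $c(t)$, and the regularity $\dot\Phi\in L^\infty\cap\clC$, $\ddot\Phi\in L^\infty$ transfers to $\dot c\in L^\infty$, $\ddot c\in L^\infty$ (the map $c\mapsto\widehat\indf_{\omega(c)}$ is not differentiable in $H$, so here one has to be a little careful: the abstract construction must actually produce $\Phi$ of the special indicator form, which it does because the static actuators are indicator functions and the ``traveling'' interpolation keeps the support a translate of $\omega_0$ — the velocity bound on $c$ is what is really being asserted, equivalently a Lipschitz-in-time bound on the support's center). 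The bounds $\norm{u(y_0)}{L^2}\le C_u\norm{y_0}{H}$, $\norm{\dot c}{L^\infty}+\norm{\ddot c}{L^\infty}\le C_c$ with constants independent of $y_0$ and $c_0$ are inherited verbatim from the abstract statement, once one notes that the dependence on the initial actuator position $c_0$ there is uniform over the compact admissible set $(1-r)\Omega+\tfrac r2 L$; the hypothesis $\dot c(0)=0$ matches the abstract requirement on the initial actuator velocity.

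**Main obstacle.** The routine parts are the semigroup/perturbation estimates and the final translation of notation. The genuine work — and the reason the rectangular domain hypothesis appears — is the second step: establishing that finitely many small box-shaped indicator actuators, with centers constrained to $(1-r)\Omega+\tfrac r2 L$, are stabilizing, i.e. that the oblique projection onto the finite-dimensional unstable subspace restricted to their span is surjective. Equivalently, one must rule out that all relevant eigenfunction integrals over small boxes degenerate simultaneously; on a box this is tractable via the explicit trigonometric eigenbasis and a Vandermonde/tensor-product nondegeneracy argument, but it is where the real estimate lives, and getting the smallness condition on $r$ and the placement of the centers right is the crux.
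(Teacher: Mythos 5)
Your overall architecture matches the paper's: write~\eqref{sys-y-parab-Intro} in the abstract form with $A=-\nu\Delta+\Id$ and $A_{\rm rc}(t)=(a(t,\Bigcdot)-1)\Id+b(t,\Bigcdot)\cdot\nabla$, check Assumptions~\ref{A:A0sp}--\ref{A:A1} (routine), establish the static-actuator squeezing property of Assumption~\ref{A:MstaticAct}, invoke the abstract result, and transfer the regularity of $\Phi$ to the center $c$ via roads of the form~\eqref{roads-parab}. Your parenthetical remark that $c\mapsto\widehat\indf_{\omega(c)}$ is not differentiable in $L^2(\Omega)$, so that what is really asserted is a bound on $\dot c$ and $\ddot c$, is exactly the reading the paper adopts.

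The genuine gap is in your second step. You propose to verify the static stabilizability by (i) truncating at the unstable spectrum of the uncontrolled operator and (ii) checking a rank condition for the Gram matrix of the indicators against the unstable eigenfunctions. That is the classical argument for \emph{autonomous} systems; here $a=a(t,x)$ and $b=b(t,x)$ depend on time, so there is no invariant finite-dimensional unstable subspace and the dynamics projected onto a fixed eigenspace $E_N$ do not decouple from the complement --- the cross terms generated by $A_{\rm rc}(t)$ destroy the modal decomposition, and the paper explicitly warns that autonomous tools do not carry over. The paper instead obtains Assumption~\ref{A:MstaticAct} from the oblique-projection feedback $P_{U_M}^{E_M^\perp}\left(A_{\rm rc}y-\lambda y\right)$ of~\cite{Rod20-eect}; the nondegeneracy you identify (pairing of small-box indicators with eigenfunctions of the Laplacian on the rectangle) does enter there, but as the condition that $L^2(\Omega)=U_M\oplus E_M^\perp$ with $\sup_{M}\norm{P_{U_M}^{E_M^\perp}}{\clL(H)}<+\infty$, combined with a Lyapunov-type argument valid for nonautonomous reaction--convection terms rather than with a spectral truncation; one must also extract from the feedback form the uniform-in-$k$ bound $\fkK$ on the coefficient functions $v_{k,j}$ required by Assumption~\ref{A:MstaticAct} (the paper's \eqref{control2.0}--\eqref{control2.1}), which your outline does not address. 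A second, smaller omission: Theorem~\ref{T:main0} requires $\Phi(0)=\widehat\Phi_1$ and $y_0\in V$, whereas Theorem~\ref{T:main-Intro} allows an arbitrary admissible $c_0$ and $y_0\in L^2(\Omega)$; the paper bridges both points with Corollary~\ref{C:main1}, switching the control off on $[0,1]$ while the actuator travels from $c_0$ to the first static position and using parabolic smoothing to land in $V$ at time $t=1$, a step your plan does not account for.
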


\medskip
Besides the theoretical result we also discuss the numerical computation and implementation of a stabilizing
control input based on a moving indicator function.
Note that the control input~$u(t)\widehat\indf_{\omega(c(t))}$
depends nonlinearly on the control functions~$(u,c)$. In order to realize the geometrical constraint $\omega(c(t))\subset \Omega$,
which can be obtained through   constraints on the velocity~$\dot c$ and acceleration~$\ddot c$, it will be convenient to introduce
a new auxiliary function
\[\eta=\ddot c+\varsigma\dot c+\epsilon c,\qquad\mbox{for given}\quad \epsilon\ge0,\quad\varsigma\ge 0.\]
We shall consequently  consider system~\eqref{sys-y-parab-Intro} in the extended
form
\begin{subequations}\label{sys-y-parab-Intro-Ext}
 \begin{align}
 \dot y-\nu\Delta y+ay+b\cdot\nabla y&=u\widehat\indf_{\omega(c)},\quad\clG y\rest{\Gamma}=0,&\quad& y(0,\Bigcdot)=y_0,\\
 \ddot c+\varsigma\dot c+\epsilon c&=\eta,&\quad& c(0)=c_0,\quad \dot c(0)= 0,
 \end{align}
\end{subequations}
with proper constraints on the newly introduced additional control $\eta$, in order to force the actuator to move in an appropriate way.
 Note that looking for~$c$ is equivalent to looking for~$\eta$,
 as soon as the initial actuator position~$c_0$ is given.
An analogous extension argument is used in~\cite{PhanRod18-mcss,Rod18,Badra09-cocv}, with a first order~{\sc ode},
$\dot c+\epsilon c=\eta$
in order to deal with boundary controls problems.

Observe that system~\eqref{sys-y-parab-Intro} is linear in the state variable~$y$ and nonlinear in the control variable~$(u,c)$.
Instead system~\eqref{sys-y-parab-Intro-Ext} is linear in the control variable~$(u,\eta)$ and nonlinear
in the state variable~$(y,c)$,
because~$c(t)\mapsto\indf_{\omega(c(t))}$ is nonlinear from~$\bbR^d$ into~$L^2(\Omega)$.

In order to compute  the pairs ~$(u,\eta)$, the stabilization
problem will be  formulated as an infinite horizon optimal control  problem (see~\eqref{cost-funct}--\eqref{IHOP}) whose solution
will be   a stabilizing  pair of  ~$(u,\eta)$.
To deal with  the resulting infinite-horizon problem  a receding horizon control framework will be employed.
In this framework, a stabilizing moving control is constructed
through the concatenation of solutions of open-loop
problems defined on overlapping temporal domains covering~$[0,\infty)$.

\subsection{Related literature}
Moving controls have been considered, for example, in~\cite{CastroZuazua05,Khapalov01} where suitable
moving Dirac delta functions are taken as actuators. In~\cite{Khapalov01}, both approximate controllability
and exact null controllability results
are proven for a semilinear 1D parabolic equation by means of  two  moving Dirac functions.
Both Dirac delta functions and indicator functions are typical actuators in applications,
see for instance ~\cite{Khapalov01} (cf.~\cite[Eqs.~(1.2) and~(1.3)]{Khapalov01}).
Such actuators  lead to {\em lumped} controls, which are essentially characterized by the
temporal behavior only. Concerning again the terminology,
in~\cite{Khapalov01} the
Dirac delta functions based controls are called {\em point} controls, and the indicator functions based controls are
called {\em average} controls or {\em zone} controls.
In~\cite{CastroZuazua05} approximate controllability results for higher dimensional linear autonomous parabolic equations,
by means of moving point controls and, more generally, with controls moving in a lower-dimensional submanifold, are presented.
For semilinear 1D parabolic equations evolving in the spatial interval~$(0,1)\in\bbR$,
approximate controllability results have been derived in~\cite{Khapalov99} by means of a single static average
control~$u(t)\indf_{\widehat\omega}$, $\widehat\omega=(l_1,l_2)\subset(0,1)$. The results
are obtained under the condition that $l_1\pm l_2$ are irrational numbers.

Concerning partial differential equations which are not of parabolic type we refer
to~\cite{MartinRosierRouchon13}, where controllability properties
for 1D damped wave equations, under periodic boundary conditions, $\Omega=\bbT$, are derived by means of a control based on a single
moving point actuator~$u(t)\Phi(t)$. The actuator is
either a Dirac delta~$\Phi(t)=\delta_{c(t)}$, see~\cite[Thm.~1.4]{MartinRosierRouchon13}, or a single
moving function~$\Phi(t)=\phi_{c(t)}\in L^2(\bbT)$, see~\cite[Thm.~1.1]{MartinRosierRouchon13} where we can also see that the
function $\phi_{c(t)}$ is required to have zero mean.
{ We refer also to~\cite{CasCinMun14,ChSilvaRosierZuazua14,KunischSouza18} where a
moving average control is considered, but where the magnitude control function~$u=u(t,x)$
depends on both time and space variables. By means of such a moving control, in~\cite{ChSilvaRosierZuazua14} the
approximate controllability of higher dimensional
damped wave equations is derived and, in~\cite{CasCinMun14} the  inner null controllability of
the one-dimensional wave equation is investigated theoretically and numerically.} In~\cite{KunischSouza18} the null controllability
is derived for a 1D coupled {\sc pde-ode} system of FitzHugh--Nagumo type, again with a
magnitude control function~$u=u(t,x)$ depending on both time and space variables.
We recall that such  systems are not null controllable
by means of static average controls.

It is well known that observability properties and null controllability properties are related.  In this respect we
refer  to the observability results in~\cite{Khapalov94} for the autonomous
higher dimensional case with point observations.
We recall that often the tools used to derive controllability/observability results for autonomous
systems are not appropriate or are not valid to
deal with the nonautonomous case. See for example the solution representation in~\cite[Eq.~(2.1)]{Khapalov94},
and the discussion in ~\cite[Sect.~6, \S1]{CastroZuazua05}.

Our result in Theorem~\ref{T:main-Intro} is of different nature, when compared to the ones mentioned above.
Approximate  and null controllability are properties concerning the state~$y(T)$ at a given time~$T$.
Instead, our goal in ~\eqref{goal.parab-Intro} is concerned with the asymptotic behavior of the state (as time goes to~$+\infty$).
Of course, if we have  a control driving the state to~$y(T)=0$ at time~$t=T$,
then by switching the control off,
for $t>T$, results in  a stabilizing
control.  Thus exact controllability is a stronger property than stabilizability.

On the other hand for practical considerations controls driving the system to~$0$ at time~$T$
may not be enough for applications, since, due to noise or computational error, the control may
not drive the state exactly to the origin. If the latter is unstable and the control is nonetheless
switched off then the state may diverge as time tends to infinity.
Therefore,  a control is still needed which  stabilizes the
state once it is close to the origin, or which   keeps it in a small neighborhood of~$0$ which is proportional
to the magnitude of noise and disturbances.

Moving indicator functions have also been considered in~\cite{Demetriou10}, where the goal is not the stabilizability of a given unstable free dynamics (as in this manuscript), but rather  to speed up the stabilization and/or counteract the effect of external disturbances (sources). Though the nominal systems under study in~\cite{Demetriou10} are stable parabolic equations, the proposed control design is interesting for applications.

\subsection{On  (lack of) stabilizability with a single static actuator}
In this section we provide examples  where a single static actuator
is not sufficient to stabilize the system, no matter what its shape or placement in the spatial domain is.
This negative result can be
seen as a motivation for our work in this manuscript,
where we show that we can still stabilize the
system if we are allowed to dynamically move a given indicator function as actuator.

Here  we consider only the particular case of controlled autonomous diffusion-reaction
systems of the form
\begin{subequations}\label{sys-y-axy}
 \begin{align}
 &\tfrac{\p}{\p t} y(t,x)-\nu\Delta y(t,x)+a(x)y(t,x)=u(t)\Psi(x),\qquad t>0,\\
 & y(0,\Bigcdot)=y_0,\qquad \clG y\rest\Gamma=0,
 \end{align}
 \end{subequations}
evolving in a regular enough bounded domain~$\Omega\subset\bbR^d$, $d\in\bbN_0$, and with
\begin{align}
 &\nu>0,\qquad a_0\in\bbR,\qquad\Psi\in L^2(\Omega),\\
 &u\in L^2_{\rm loc}((0,+\infty),\bbR),\qquad  y_0\in L^2(\Omega),\qquad\mbox{and}\qquad
a\in L^\infty(\Omega).
 \end{align}
In~\eqref{sys-y-axy} above $\clG$ stands for either the Dirichlet or the  Neumann trace operator.

 Let~$\{\widetilde e_i\mid i=1,2,\dots\}$ be a countable complete linearly independent
 system of eigenfunctions of the operator
 \[
 \clA\coloneqq-\nu\Delta +a(x)\Id\colon\rmD(\clA)\to L^2(\Omega),
 \]
with domain~$\rmD(\clA)
 =\{z\in H^2(\Omega)\mid \clG z\rest\Gamma=0\}$. Let~$\widetilde \alpha_i$ be the corresponding eigenvalues
\[
 \clA\widetilde e_i=\widetilde \alpha_i\widetilde e_i,
 \qquad \widetilde \alpha_1\le\widetilde \alpha_2\le\widetilde \alpha_3\le....,
 \qquad \lim_{i\to+\infty}\widetilde \alpha_i=+\infty.
\]
The following result implies that system~\eqref{sys-y-axy} is not exponentially stabilizable, for any given
static actuator~$\Psi\in L^2(\Omega)$.
\begin{proposition}\label{P:NOTstabil1Act}
 If there exists a nonsimple nonpositive eigenvalue~$\widetilde \alpha_j$,
then for each~$\Psi\in L^2(\Omega)$ we can find~$y_0\in L^2(\Omega)$ such that
$\clA y_0=\widetilde \alpha_j y_0$ and~$(y_0,\Psi)_{L^2(\Omega)}=0$. In particular,
 for
all~$u\in L^2_{\rm loc}((0,+\infty),\bbR)$ the weak solution~$y$ of~\eqref{sys-y-axy}
satisfies
$\norm{y(t,\Bigcdot)}{L^2(\Omega)}\ge\rme^{\widetilde \alpha_j t}\norm{y_0}{L^2(\Omega)}$, with~$\widetilde \alpha_j\ge0$
\end{proposition}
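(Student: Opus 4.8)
The plan is to exploit the two hypotheses---that $\widetilde\alpha_j$ is nonpositive and that its eigenspace has dimension at least two---to produce an eigenfunction orthogonal to the actuator, and then show that along such an initial datum the control has no effect on the corresponding mode.

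First I would fix $\Psi\in L^2(\Omega)$ and let $E_j\coloneqq\Ker(\clA-\widetilde\alpha_j\Id)$ be the eigenspace associated with~$\widetilde\alpha_j$. By assumption $\dim E_j\ge 2$, so the orthogonal complement (inside $E_j$) of the single vector obtained by projecting $\Psi$ onto~$E_j$ is nontrivial; pick any unit vector $y_0$ in that complement. Then $\clA y_0=\widetilde\alpha_j y_0$ and, since $y_0\perp \Psi-P_{E_j}\Psi$ trivially and $y_0\perp P_{E_j}^\perp\Psi$ because $y_0\in E_j$, we get $(y_0,\Psi)_{L^2(\Omega)}=(y_0,P_{E_j}\Psi)_{L^2(\Omega)}=0$ by the choice of $y_0$.

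Next I would extract the scalar ODE governing the component of the state along $y_0$. Writing the weak solution of~\eqref{sys-y-axy} and testing against $y_0$ (or, more cleanly, pairing the equation with $y_0$ and using self-adjointness of $\clA$ to move it onto $y_0$), one obtains for $\phi(t)\coloneqq(y(t,\Bigcdot),y_0)_{L^2(\Omega)}$ the identity $\dot\phi(t)=-(\,y(t,\Bigcdot),\clA y_0)_{L^2(\Omega)}+u(t)(\Psi,y_0)_{L^2(\Omega)}=-\widetilde\alpha_j\phi(t)$, using $(\Psi,y_0)_{L^2(\Omega)}=0$. Hence $\phi(t)=\rme^{-\widetilde\alpha_j t}\phi(0)=\rme^{-\widetilde\alpha_j t}\norm{y_0}{L^2(\Omega)}^2$ (recall $y_0$ was taken of unit norm, or carry the general constant). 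By Cauchy--Schwarz, $\norm{y(t,\Bigcdot)}{L^2(\Omega)}\ge |\phi(t)|/\norm{y_0}{L^2(\Omega)}=\rme^{-\widetilde\alpha_j t}\norm{y_0}{L^2(\Omega)}$, and since $\widetilde\alpha_j\le 0$ we have $-\widetilde\alpha_j\ge 0$, so the stated lower bound follows (and in particular the norm does not decay to zero, precluding exponential stabilizability).

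The only mildly delicate point is justifying the ODE for $\phi$ at the level of weak solutions: one should either invoke a standard regularization/density argument (approximate $u$ by smooth controls, pass to the limit using continuous dependence) or note that $t\mapsto(y(t,\Bigcdot),y_0)_{L^2(\Omega)}$ is absolutely continuous with the asserted derivative because $y_0\in\rmD(\clA)$ and the variation-of-parameters formula for the analytic semigroup $\rme^{-t\clA}$ gives $y(t,\Bigcdot)=\rme^{-t\clA}y_0+\int_0^t\rme^{-(t-s)\clA}u(s)\Psi\,\ed s$, whose inner product with $y_0$ is explicitly computable and yields $\phi(t)=\rme^{-\widetilde\alpha_j t}\phi(0)+\int_0^t\rme^{-\widetilde\alpha_j(t-s)}u(s)(\Psi,y_0)_{L^2(\Omega)}\,\ed s$; the integral vanishes by orthogonality. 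Everything else is routine linear algebra and a single application of Cauchy--Schwarz, so I expect no real obstacle beyond stating the semigroup representation cleanly.
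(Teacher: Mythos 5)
Your proof is correct and follows essentially the same route as the paper: produce an eigenfunction of $\widetilde\alpha_j$ orthogonal to $\Psi$ (the paper does this by the explicit combination $\beta_{j+1}\widetilde e_j-\beta_j\widetilde e_{j+1}$, you by projecting $\Psi$ onto the eigenspace and taking an orthogonal direction there), then observe that the component of the state along that eigenfunction evolves free of the control, yielding the non-decaying lower bound $\ex^{-\widetilde\alpha_j t}\norm{y_0}{L^2(\Omega)}$ via Cauchy--Schwarz (the paper uses the equivalent orthogonal splitting $y=q+Q$). Your sign bookkeeping matches the paper's proof (exponent $-\widetilde\alpha_j\ge0$), which is what the statement intends.
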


Next, for the sake of completeness, we present/recall also a positive result for stabilization with an appropriate
single actuator~$\Psi$.
\begin{proposition}\label{P:stabil1Act}
If all the nonpositive eigenvalues of~$\clA$ are simple, and if
none of the corresponding eigenfunctions is orthogonal to~$\Psi$, then
system~\eqref{sys-y-axy} is exponentially stabilizable.
\end{proposition}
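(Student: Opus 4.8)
The plan is to split the state space into a finite-dimensional ``unstable'' part and an exponentially stable complement, to stabilize the finite-dimensional part by classical pole placement, and then to transfer the resulting decay to the stable part via Duhamel's formula.

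Since $a$ is real and bounded, $\clA=-\nu\Delta+a\Id$ is self-adjoint with compact resolvent; hence the $\widetilde e_i$ may be taken orthonormal in $L^2(\Omega)$, the $\widetilde\alpha_i$ are real with $\widetilde\alpha_i\to+\infty$, and only finitely many of them are nonpositive. By hypothesis these are simple, hence pairwise distinct, and we enumerate them as $\widetilde\alpha_1<\dots<\widetilde\alpha_N\le0$ (if $N=0$ the open loop is already exponentially stable and $u\equiv0$ works; so assume $N\ge1$). Replacing the given $\widetilde e_i$ by an orthonormal system does not affect the nonorthogonality hypothesis, because each nonpositive eigenvalue is simple. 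Fix $\lambda$ with $0<\lambda<\widetilde\alpha_{N+1}$, let $P^{\rm u}z\coloneqq\sum_{i=1}^N(z,\widetilde e_i)_{L^2(\Omega)}\widetilde e_i$ be the orthogonal spectral projection onto $H^{\rm u}\coloneqq\linspan\{\widetilde e_1,\dots,\widetilde e_N\}$, and set $P^{\rm s}\coloneqq\Id-P^{\rm u}$, $H^{\rm s}\coloneqq\Ran P^{\rm s}$. Both subspaces are $\clA$-invariant, and the part $\clA^{\rm s}$ of $\clA$ in $H^{\rm s}$ is a self-adjoint operator with $\dnorm{\rme^{-t\clA^{\rm s}}}{}\le\rme^{-\lambda t}$ for $t\ge0$.

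In the coordinates $\widehat y(t)\coloneqq\big((y(t,\Bigcdot),\widetilde e_1)_{L^2(\Omega)},\dots,(y(t,\Bigcdot),\widetilde e_N)_{L^2(\Omega)}\big)\in\bbR^N$, the $H^{\rm u}$-projection of~\eqref{sys-y-axy} is the linear control system $\dot{\widehat y}=D\widehat y+u\,b$, with $D=-\mathrm{diag}(\widetilde\alpha_1,\dots,\widetilde\alpha_N)$ and $b=\big((\Psi,\widetilde e_1)_{L^2(\Omega)},\dots,(\Psi,\widetilde e_N)_{L^2(\Omega)}\big)^{\top}$. The Kalman matrix factors as $[\,b\ \ Db\ \cdots\ D^{N-1}b\,]=\mathrm{diag}(b)\,V$, where $V$ is the Vandermonde matrix on the nodes $-\widetilde\alpha_1,\dots,-\widetilde\alpha_N$; it is invertible because the $\widetilde\alpha_i$ are pairwise distinct ($V$ invertible) and every component of $b$ is nonzero by hypothesis ($\mathrm{diag}(b)$ invertible). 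Thus $(D,b)$ is controllable, and by the pole-placement theorem there is $K\in\bbR^{1\times N}$ with $D+bK$ having spectral abscissa strictly below $-\lambda$. We take the feedback $u(t)\coloneqq K\widehat y(t)=KP^{\rm u}y(t,\Bigcdot)$, which is continuous in $t$.

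Because $P^{\rm u}\Psi$ equals the vector $b$, the closed loop is coupled one-directionally: the $H^{\rm u}$-component solves the autonomous system $\dot{\widehat y}=(D+bK)\widehat y$, so $\norm{P^{\rm u}y(t,\Bigcdot)}{L^2(\Omega)}\le C\rme^{-\lambda t}\norm{P^{\rm u}y_0}{L^2(\Omega)}$, while $y^{\rm s}\coloneqq P^{\rm s}y$ solves $\dot y^{\rm s}+\clA^{\rm s}y^{\rm s}=(K\widehat y(t))\,P^{\rm s}\Psi$; Duhamel's formula, $\dnorm{\rme^{-t\clA^{\rm s}}}{}\le\rme^{-\lambda t}$, and the exponential decay of $t\mapsto K\widehat y(t)$ then give $\norm{y^{\rm s}(t)}{L^2(\Omega)}\le C\rme^{-\mu t}\norm{y_0}{L^2(\Omega)}$ for any fixed $\mu\in(0,\lambda)$. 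Adding the two bounds yields $\norm{y(t,\Bigcdot)}{L^2(\Omega)}\le C\rme^{-\mu t}\norm{y_0}{L^2(\Omega)}$, which is the asserted exponential stabilizability; the constructed $u$ moreover decays exponentially, so $u\in L^2((0,+\infty),\bbR)$. The only place where the two hypotheses are genuinely used is the controllability of $(D,b)$: simplicity furnishes the distinctness of the $\widetilde\alpha_i$ and nonorthogonality furnishes $b_i\ne0$. Everything else is routine; one may alternatively bypass the explicit pole placement and invoke the Fattorini--Hautus criterion, which for the self-adjoint $\clA$ reduces exactly to $\Ker(\clA-\widetilde\alpha_j\Id)\not\perp\Psi$ for every $\widetilde\alpha_j\le0$. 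I expect the only minor nuisance to be bookkeeping the constants in the Duhamel estimate so that one rate $\mu>0$ works for the whole state---the worst case $\int_0^t\rme^{-\lambda(t-\tau)}\rme^{-\lambda\tau}\,\ed\tau=t\rme^{-\lambda t}$ is why one passes from $\lambda$ to a slightly smaller $\mu$.
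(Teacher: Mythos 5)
Your argument is correct, and its core coincides with the paper's: the same spectral decomposition into the span of the eigenfunctions of the nonpositive eigenvalues and its orthogonal complement, and the same verification of controllability of the projected finite-dimensional single-input system via the factorization of the Kalman matrix into $\mathrm{diag}(b)$ times a Vandermonde matrix, which is exactly where simplicity (distinct nodes) and nonorthogonality ($b_i\ne0$) enter. The only divergence is the last step. The paper exploits controllability more directly: it steers the finite-dimensional component to zero on $[0,T]$ and then switches the control off, after which the full state lives in the stable spectral subspace and decays at rate $\widetilde\alpha_{j_0}>0$ with no further estimate needed. You instead close the loop by pole placement and propagate the decay to the stable component via Duhamel; this costs you the (routine) convolution estimate and the passage from $\lambda$ to a slightly smaller $\mu$, but it buys a genuine feedback law $u=KP^{\rm u}y$ rather than an open-loop control, which is arguably the more robust object for the stabilization problem. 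Both routes are valid proofs of the stated exponential stabilizability.
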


The proofs of Propositions~\ref{P:NOTstabil1Act} and~\ref{P:stabil1Act} are given in the Appendix,
Section~\ref{Apx:proofP:NOTYESstabil1Act}.

\subsection{Contents and notation}
In Section~\ref{S:assumptions} we present the assumptions we require  for the operators~$A$ and~$A_{\rm rc}$
in~\eqref{sys-y-Intro}.
Our main
exponential stabilization result is proved in Section~\ref{S:stability}. In Section~\ref{S:proofT:main-Intro}
this is applied to the  concrete parabolic
equations as~\eqref{sys-y-parab-Intro} and Theorem~\ref{T:main-Intro} is proved.
Section~\ref{S:num_impl} is devoted to the numerical computation of a moving control
based on the receding horizon framework  which shows the exponentially stabilizing performance.

\smallskip

Concerning  notation, we write~$\bbR$ and~$\bbN$ for the sets of real numbers and nonnegative
integers, respectively, and  we set $\bbR_r\coloneqq(r,+\infty)$ with $r\in\bbR$, whose closure is
denoted by~$\overline\bbR_r\coloneqq[r,+\infty)$. Finally, we set~$\bbN_0\coloneqq\mathbb N\setminus\{0\}$.

Given two Banach spaces~$X$ and~$Y$, if the inclusion
$X\subseteq Y$ is continuous, we write $X\xhookrightarrow{} Y$. We write
$X\xhookrightarrow{\rm d} Y$, respectively $X\xhookrightarrow{\rm c} Y$, if the inclusion is also dense, respectively compact.

Let $X\subseteq Z$ and~$Y\subseteq Z$ be continuous inclusions, where~$Z$ is a Hausdorff topological space.
Then we  define the Banach spaces $X\times Y$, $X\cap Y$, and $X+Y$,
endowed with the  norms
$|(h,g)|_{X\times Y}:=\bigl(|h|_{X}^2+|g|_{Y}^2\bigr)^{\frac{1}{2}}$,
$|\hat h|_{X\cap Y}:=|(\hat h,\hat h)|_{X\times Y}$, and
$|\tilde h|_{X+Y}:=\inf\limits_{(h,g)\in X\times Y}\bigl\{|(h,g)|_{X\times Y}\mid \tilde h=h+g\bigr\}$,
respectively.
In case we know that $X\cap Y=\{0\}$, we say that $X+Y$ is a direct sum and we write $X\oplus Y$ instead.

For a given interval~$I\subset\bbR$, we denote~$W(I,X,Y)\coloneqq\{f\in L^2(I,X)\mid \dot f\in L^2(I,Y)\}$,
endowed withe the norm~$\norm{f}{W(I,X,Y)}\coloneqq \norm{(f,\dot f)}{L^2(I,X)\times L^2(I,Y)}$.

The space of continuous linear mappings from~$X$ into~$Y$ is denoted by~$\clL(X,Y)$. In case~$X=Y$ we
write~$\clL(X)\coloneqq\clL(X,X)$.

The continuous dual of~$X$ is denoted~$X'\coloneqq\clL(X,\bbR)$. The adjoint of an operator $L\in\clL(X,Y)$ will be denoted $L^*\in\clL(Y',X')$.

The space of continuous functions from~$X$ into~$Y$ is denoted by~$\clC(X,Y)$.

The orthogonal complement to a given subset~$B\subset H$ of a Hilbert space~$H$, with scalar product~$(\Bigcdot,\Bigcdot)_H$,  is
denoted by~$B^\perp\coloneqq\{h\in H\mid (h,s)_H=0\mbox{ for all }s\in B\}$.

Given two closed subspaces~$F\subseteq H$ and~$G\subseteq H$ of the Hilbert space given by~$H=F\oplus G$,
we denote by~$P_F^G\in\clL(H,F)$
the oblique projection in~$H$ onto~$F$ along~$G$. That is, writing $h\in H$ as $h=h_F+h_G$ with~$(h_F,h_G)\in F\times G$,
we have~$P_F^Gh\coloneqq h_F$.
The orthogonal projection in~$H$ onto~$F$ is denoted by~$P_F\in\clL(H,F)$. Notice that~$P_F= P_F^{F^\perp}$.

By
$\overline C_{\left[a_1,\dots,a_n\right]}$ we denote a nonnegative function that
increases in each of its nonnegative arguments~$a_i\ge0$, $1\le i\le n$.

Finally, $C,\,C_i$, $i=0,\,1,\,\dots$, stand for unessential positive constants.

\section{Assumptions}\label{S:assumptions}
The results will follow under general assumptions on the plant dynamics
operators~$A$ and~$A_{\rm rc}$, and on a particular stabilizability
assumption of~\eqref{sys-y-Intro} by means of controls
based on a large enough finite number~$M$ of suitable static actuators.

The Hilbert space~$H$, in which system~\eqref{sys-y-Intro}
is evolving in,
will be set as a pivot space, that is, we identify,~$H'=H$.
Let~$V$ be another Hilbert space
with~$V\subset H$.
\begin{assumption}\label{A:A0sp}
 $A\in\clL(V,V')$ is symmetric and $(y,z)\mapsto\langle Ay,z\rangle_{V',V}$ is a complete scalar product in~$V.$
\end{assumption}

From now on, we suppose that~$V$ is endowed with the scalar product~$(y,z)_V\coloneqq\langle Ay,z\rangle_{V',V}$,
which still makes~$V$ a Hilbert space.
Necessarily, $A\colon V\to V'$ is an isometry.
\begin{assumption}\label{A:A0cdc}
The inclusion $V\subseteq H$ is dense, continuous, and compact.
\end{assumption}

Necessarily, we have that
\[
 \langle y,z\rangle_{V',V}=(y,z)_{H},\quad\mbox{for all }(y,z)\in H\times V,
\]
and also that the operator $A$ is densely defined in~$H$, with domain $\rmD(A)$ satisfying
\[
\rmD(A)\xhookrightarrow{\rm d,\,c} V\xhookrightarrow{\rm d,\,c} H\xhookrightarrow{\rm d,\,c} V'\xhookrightarrow{\rm d,\,c}\rmD(A)'.
\]
Further,~$A$ has a compact inverse~$A^{-1}\colon H\to \rmD(A)$, and we can find a nondecreasing
system of (repeated accordingly to their multiplicity) eigenvalues $(\alpha_n)_{n\in\bbN_0}$ and a corresponding complete basis of
eigenfunctions $(e_n)_{n\in\bbN_0}$:
\begin{equation}\label{eigfeigv}
0<\alpha_1\le\alpha_2\le\dots\le\alpha_n\le\alpha_{n+1}\to+\infty \quad\mbox{and}\quad Ae_n=\alpha_n e_n.
\end{equation}

We can define, for every $\zeta\in\bbR$, the fractional powers~$A^\zeta$, of $A$, by
\[
 y=\sum_{n=1}^{+\infty}y_ne_n,\quad A^\zeta y=A^\zeta \sum_{n=1}^{+\infty}y_ne_n\coloneqq\sum_{n=1}^{+\infty}\alpha_n^\zeta y_n e_n,
\]
and the corresponding domains~$\rmD(A^{|\zeta|})\coloneqq\{y\in H\mid A^{|\zeta|} y\in H\}$, and
$\rmD(A^{-|\zeta|})\coloneqq \rmD(A^{|\zeta|})'$.
We have that~$\rmD(A^{\zeta})\xhookrightarrow{\rm d,\,c}\rmD(A^{\zeta_1})$, for all $\zeta>\zeta_1$,
and we can see that~$\rmD(A^{0})=H$, $\rmD(A^{1})=\rmD(A)$, $\rmD(A^{\frac{1}{2}})=V$.

For the time-dependent operator 
 we assume the following:
\begin{assumption}\label{A:A1}
For almost every~$t>0$ we have~$A_{\rm rc}(t)\in\clL(V, H)$,
and we have a uniform bound, that is, $\norm{A_{\rm rc}}{L^\infty(\bbR_0,\clL(V, H))}\eqqcolon C_{\rm rc}<+\infty.$
\end{assumption}

Finally, we will need the following norm squeezing property, by means of controls based on static actuators.
\begin{assumption}\label{A:MstaticAct}
There exist:
\begin{itemize}
 \item a positive integer~$M$, and positive real numbers~$T>0$ and~$\theta\in(0,1)$,
 \item a linearly independent family~$\{\widehat\Phi_j\mid j\in\{1,2,\dots,M\}\}\subset H$ with~$\norm{\widehat\Phi_j}{H}=1$,
\item a family of
 functions~$\{v_k\in\clL(V,L^\infty((kT,kT+T),\bbR^M))\mid k\in\bbN\}$, with
 $\sup\limits_{k\in\bbN}\norm{v_k}{\clL(V,L^\infty((kT,kT+T),\bbR^M))}\le\fkK$,
 \end{itemize}
 such that:  for all $k\in\bbN$, the solution of
 \begin{align}\label{sys-y-static}
 \dot y+Ay+A_{\rm rc}(t)y=\textstyle\sum\limits_{j=1}^Mv_{k,j}(\fkv)(t)\widehat\Phi_j,\qquad y(kT)=\fkv,\qquad t\in(kT,kT+T),
\end{align}
satisfies
\begin{align}\label{goal-static}
 &\norm{y(kT+T)}{V}\le \theta\norm{\fkv}{V},\qquad\mbox{for all}\quad\fkv\in V.
\end{align}
\end{assumption}

\medskip
\begin{remark}
Assumptions~\ref{A:A0sp}--\ref{A:MstaticAct} are satisfiable
for parabolic equations as~\eqref{sys-y-parab-Intro}
evolving in bounded rectangular domains~$\Omega\subset\bbR^d$. The satisfiability of such assumptions shall be
revisited/proven later on, in Section~\ref{S:proofT:main-Intro}, where we give the proof of Theorem~\ref{T:main-Intro}, concerning
standard parabolic equations.
\end{remark}
\begin{remark}
 Alternatively, in Assumption~\ref{A:A1} we can take a reaction-convection
 term~$A_{\rm rc}(t)\in L^\infty(\bbR_0,\clL(H, V'))$. The proof will however involve
 slightly different steps. Motivations and further details are given later in Section~\ref{sS:RemarkArc}.
\end{remark}

\section{Existence of a moving stabilizing control}\label{S:stability}
Hereafter~$\fkS_H$ denote the unit sphere in~$H$,
\[
 \fkS_H\coloneqq\{h\in H\mid \norm{h}{H}= 1\}.
\]
We prove our main result, which is the following.
\begin{theorem}\label{T:main0}
Under Assumptions~\ref{A:A0sp}--\ref{A:MstaticAct}, there exist a magnitude control function~$u$ and a continuous
moving actuator~$\Phi$ satisfying
\begin{align}
 &u\in L^2(\bbR_0,\bbR),\quad\dot\Phi\in L^\infty(\bbR_0,H),\quad \ddot\Phi\in L^\infty(\bbR_0,H),\notag\\
 &\Phi(0)=\widehat\Phi_1,\quad\dot\Phi(0)=0,\quad\Phi(t)\in\fkS_H\quad\mbox{for}\quad t\ge0,\notag
\end{align}
and constants~$C\ge1$ and $\mu>0$, such
 that the solution of the system~\eqref{sys-y-Intro},
  \begin{align}\label{sys-y0}
  \dot y+Ay+A_{\rm rc}(t)y=u(t)\Phi(t),\qquad y(0)=y_0\in V,\qquad t>0,
 \end{align}
 satisfies~\eqref{goal-Intro},
\begin{subequations}\label{goal0}
\begin{align}
 &\norm{y(t)}{V}\le C\ex^{-\mu t}\norm{y_0}{V},\quad\mbox{for all}\quad t\ge0,\label{goal0-exp}
 \intertext{and the mapping~$y_0\mapsto u(y_0)$ is continuous,}
 &\norm{u}{\clC( V,L^2(\bbR_0,\bbR))}\eqqcolon\fkN_0<+\infty.\label{goal0-L2}
\end{align}
\end{subequations}
 Furthermore, $\norm{\dot\Phi}{L^\infty(\bbR_0,H)}+\norm{\ddot\Phi}{L^\infty(\bbR_0,H)}\le C_\Phi$ with
$C_\Phi$ independent of~$y_0$.
 \end{theorem}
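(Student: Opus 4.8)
The plan is to build the moving actuator in three stages, starting from the finite-dimensional static stabilizability in Assumption~\ref{A:MstaticAct} and progressively regularizing the control.

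\medskip
\textbf{Step 1: From several static actuators to a switching (one-at-a-time) actuator.}
By Assumption~\ref{A:MstaticAct} we have, on each window $(kT,kT+T)$, a control $\sum_{j=1}^M v_{k,j}(\fkv)(t)\widehat\Phi_j$ squeezing the $V$-norm by a factor $\theta<1$. The idea is to replace this simultaneous combination of $M$ actuators by a control in which only one actuator $\widehat\Phi_j$ is active on each subinterval of a fine partition of $(kT,kT+T)$. I would split $(kT,kT+T)$ into $M$ (or more) equal subintervals and, on the $j$-th one, run only the $j$-th actuator. To make the resulting state stay close to the state driven by the original control, I would invoke continuity of the state with respect to the control in the relaxation (or weak$^*$, measure) topology: replacing $u(t)\sum_j c_j\widehat\Phi_j$ by a rapidly switching sequence that has the same ``average'' drives the state arbitrarily close in $\clC([kT,kT+T],V)$ as the switching frequency increases. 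Concretely, one uses the standard fact that if $u^n\rightharpoonup u$ weakly$^*$ in $L^2$ (or in the relaxation metric) then the corresponding mild solutions converge in $\clC([kT,kT+T],H)$, and with a little more work in $\clC([kT,kT+T],V)$ using the smoothing of the analytic semigroup generated by $-A$. Since the squeezing inequality $\norm{y(kT+T)}{V}\le\theta\norm{\fkv}{V}$ is strict, choosing the switching fine enough yields a slightly worse but still contractive factor $\theta'<1$, and the bound $\fkK$ on the $v_k$ gives a uniform-in-$k$ control of the new magnitudes. Iterating over $k\in\bbN$ and concatenating gives a switching control defined on all of $\bbR_0$ with exponential decay of $\norm{y(t)}{V}$ and an $L^2(\bbR_0,\bbR)$ magnitude bounded linearly by $\norm{y_0}{V}$; the $y_0\mapsto u$ map is linear here, hence continuous, giving \eqref{goal0-L2}.

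\medskip
\textbf{Step 2: From a switching actuator to a continuous moving actuator.}
The switching control from Step 1 jumps instantaneously between the points $\widehat\Phi_1,\dots,\widehat\Phi_M$ of $\fkS_H$. To obtain a continuous $\Phi\colon\bbR_0\to\fkS_H$ I would connect consecutive ``stay'' phases at $\widehat\Phi_{j}$ and $\widehat\Phi_{j'}$ by a short ``travel'' phase along a smooth arc on the sphere $\fkS_H$ (e.g. a great-circle geodesic, or a normalized segment; since the family $\{\widehat\Phi_j\}$ is finite and linearly independent, such arcs exist, stay on $\fkS_H$, and can be taken with uniformly bounded first and second derivatives). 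During a travel phase of small length $\tau$ I would simply set the magnitude $u\equiv0$, so that the contribution of the travel phases to the state is only through the free (uncontrolled) dynamics $\dot y+Ay+A_{\rm rc}y=0$ on a short interval; by Assumption~\ref{A:A1} and energy estimates this free flow inflates $\norm{y}{V}$ by at most $\ex^{C_{\rm rc}\tau}$, which is close to $1$ for $\tau$ small. Inserting these short zero-magnitude travel phases between the active phases, and shrinking $T'\coloneqq T+ (\text{total travel time})$ slightly (or re-timing), the composite contraction factor over one period stays below $1$. This produces $\Phi$ with $\Phi(0)=\widehat\Phi_1$, $\dot\Phi(0)=0$ (start and end travel phases at rest), $\dot\Phi\in L^\infty(\bbR_0,H)$, $\ddot\Phi\in L^\infty(\bbR_0,H)$, and $\Phi(t)\in\fkS_H$ for all $t$, with $C_\Phi$ depending only on the fixed arcs and travel schedule, hence independent of $y_0$. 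Concatenating over all periods and tracking constants gives the exponential decay \eqref{goal0-exp} with uniform $C,\mu$.

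\medskip
\textbf{Step 3: Exponential decay in $H$ and continuity of the control map.}
The bound \eqref{goal-Intro} in $H$ follows from \eqref{goal0-exp} together with $V\hookrightarrow H$. The $L^2$ bound and continuity of $y_0\mapsto u(y_0)$ in \eqref{goal0-L2} follow because, after fixing the (state-independent) actuator motion $\Phi$ and the switching/travel schedule, the magnitude control $u$ is produced by the \emph{linear} maps $v_k$ applied to the state at the sampling times, and the state-to-state transition over each window is a bounded linear operator; summing the geometric series in $\theta'$ gives $\norm{u(y_0)}{L^2(\bbR_0,\bbR)}\le \fkN_0\norm{y_0}{V}$, whence continuity. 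I expect the main obstacle to be Step~1: making precise the continuous dependence of the state on the control in the relaxation metric \emph{in the $V$-norm} (not merely in $H$), uniformly over the infinitely many windows, and quantifying how fine the switching must be so that the strict contraction $\theta$ degrades only to some fixed $\theta'<1$ independent of $k$. This requires careful use of maximal parabolic regularity for $\dot y+Ay+A_{\rm rc}(t)y=f$ and of the uniform bounds in Assumptions~\ref{A:A1} and~\ref{A:MstaticAct}; once that estimate is in hand, Steps~2 and~3 are comparatively routine bookkeeping of constants.
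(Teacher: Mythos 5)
Your outline follows essentially the same route as the paper: use Assumption~\ref{A:MstaticAct} window by window, convert the simultaneous $M$-actuator control into a rapidly switching one-actuator control via continuity of the state in the relaxation metric, smooth the switches into a continuous motion on $\fkS_H$ along fixed arcs, and concatenate with a geometric-series argument. The two cosmetic differences (you set $u\equiv 0$ on the short travel arcs, whereas the paper interpolates the magnitude there; and the paper additionally pads the intervals of constancy so that they are nondegenerate before inserting the arcs) are harmless and would work equally well.

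However, the step you yourself flag as ``the main obstacle'' is a genuine gap, and it is not closed by invoking analytic smoothing. To get convergence of the states in $\clC(\overline{I_k},V)$ from convergence of the controls in the relaxation metric, one substitutes $z_N=d_N-\clI_{[N]}$ with $\clI_{[N]}(t)=\int_{kT}^t(\clV_{[N]}-\clV^1)\,\ed s$, and $z_N$ then solves the equation with right-hand side $-A\clI_{[N]}-A_{\rm rc}\clI_{[N]}$; for the $V$-estimate \eqref{cont.YL2} to apply, this right-hand side must lie in $L^2(I_k,H)$, i.e.\ $\clI_{[N]}$ must take values in $\rmD(A)$. Actuators $\widehat\Phi_j$ that are merely in $H$ (e.g.\ indicator functions) do not permit this, and smoothing of the semigroup does not rescue the argument because the forcing difference does not vanish near the endpoint. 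The paper resolves this by first replacing the $\widehat\Phi_j$ by nearby unit vectors $\widetilde\Phi_j\in\rmD(A)\cap\fkS_H$ (a small perturbation in $L^2(I_k,H)$, hence a small perturbation of the state in $V$), running the relaxation-metric argument with the $\widetilde\Phi_j$, and then swapping back to the $\widehat\Phi_j$ at the piecewise-constant stage; your proof needs these two extra approximation steps. A second, smaller error: the map $y_0\mapsto u(y_0)$ is \emph{not} linear, since the switching times and the signed magnitudes $\sign(l_{k,n,j})\Sigma_n(\fkv)$ depend nonlinearly on the averages of $v^1_j(\fkv)$; what survives, and what is actually needed for \eqref{goal0-L2}, is continuity together with the homogeneous bound $\norm{u_k(\fkv)}{L^2(I_k,\bbR)}\le\fkN_1\norm{\fkv}{V}$ uniformly in $k$.
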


Note that Theorem~\ref{T:main0} gives us stabilizability in the $V$-norm. The stabilizability in $H$-norm as stated in~\eqref{goal-Intro}
follows as a consequence.
\begin{corollary}\label{C:main1}
Let $\Phi^*\in \clC^1([0,1],H)$ satisfy
\[
 \ddot\Phi^*\in L^\infty((0,1),H),\quad \Phi^*(1)=\widehat\Phi_1,\quad\dot\Phi^*(1)=0,\quad\Phi^*(t)\in\fkS_H,\quad\mbox{for}\quad t\in[0,1].
\]
Under Assumptions~\ref{A:A0sp}--\ref{A:MstaticAct}, there exist a magnitude control function~$u^{\rm e}$ and a continuous
moving actuator~$\Phi^{\rm e}$ satisfying
\begin{align}
 &u^{\rm e}\in L^2(\bbR_0,\bbR),\quad
 \dot\Phi^{\rm e}\in L^\infty(\bbR_0,H),
 \quad \ddot\Phi^{\rm e}\in L^\infty(\bbR_0,H),\notag\\
 &\Phi^e\rest{[0,1]}=\Phi^*,\quad \Phi^{\rm e}(t)\in\fkS_H\quad\mbox{for}\quad t\ge0,\notag
\end{align}
and constants~$C^{\rm e}\ge1$ and $\mu>0$, such
 that the solution of the system~\eqref{sys-y-Intro},
  \begin{align}\label{sys-y1}
  \dot y+Ay+A_{\rm rc}(t)y=u^{\rm e}(t)\Phi^{\rm e}(t),\qquad y(0)=y_0\in H,\qquad t>0,
 \end{align}
 satisfies~\eqref{goal-Intro},
\begin{subequations}\label{goal1}
\begin{align}
 &\norm{y(t)}{H}\le C^{\rm e}\ex^{-\mu t}\norm{y_0}{H},\quad\mbox{for all}\quad t\ge0,\label{goal1-exp}
 \intertext{and the mapping~$y_0\mapsto u(y_0)$ is continuous,}
 &\norm{u}{\clC( H,L^2(\bbR_0,\bbR))}\eqqcolon\fkN_0^{\rm e}<+\infty.\label{goal1-L2}
\end{align}
\end{subequations}
Moreover, $\norm{\dot\Phi^e}{L^\infty(\bbR_0,H)}+\norm{\ddot\Phi^e}{L^\infty(\bbR_0,H)}
\le \max\left\{C_\Phi,\norm{\dot\Phi^*}{L^\infty((0,1),H)}+\norm{\ddot\Phi^*}{L^\infty((0,1),H)}\right\}$ with
$C_\Phi$ independent of~$(y_0,\Phi^*(0))$.
 \end{corollary}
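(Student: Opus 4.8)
The plan is to let the system evolve freely on $[0,1]$ --- with $u^{\rm e}\equiv0$ and $\Phi^{\rm e}=\Phi^*$ there --- and then to invoke Theorem~\ref{T:main0} from time $t=1$ onward, with the smoothed state $y(1)\in V$ as the new initial datum, finally splicing the two pieces into a single control on $\bbR_0$. The normalizations $\Phi^*(1)=\widehat\Phi_1=\Phi(0)$ and $\dot\Phi^*(1)=0=\dot\Phi(0)$ are precisely what makes the concatenated actuator match to first order at the junction $t=1$. To begin with I would record the regularity estimates for the homogeneous equation $\dot y+Ay+A_{\rm rc}(t)y=0$, $y(0)=y_0\in H$, on $[0,1]$: the basic energy estimate (test with $y$, use Assumption~\ref{A:A1} and Young's inequality) yields $\norm{y}{\clC([0,1],H)}+\norm{y}{L^2((0,1),V)}\le C_0\norm{y_0}{H}$, and testing with $t\,Ay$ (rigorously through a Galerkin approximation) and integrating the differential inequality for $t\mapsto t\norm{y(t)}{V}^2$ upgrades this to $\norm{y(1)}{V}\le C_1\norm{y_0}{H}$. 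In particular the homogeneous solution operator $S\colon y_0\mapsto y(1)$ is linear and continuous from $H$ into $V$, and $\norm{y(t)}{H}\le C_0\norm{y_0}{H}$ for all $t\in[0,1]$.

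Next I would apply Theorem~\ref{T:main0} with initial state $Sy_0\in V$, placing the time origin at $t=1$: the hypotheses are invariant under translation in time (Assumptions~\ref{A:A0sp} and~\ref{A:A0cdc} do not involve time, Assumption~\ref{A:A1} holds for $A_{\rm rc}(\cdot+1)$ with the same bound $C_{\rm rc}$, and the squeezing family of Assumption~\ref{A:MstaticAct} is merely re-indexed along the shifted grid). This furnishes $\widehat u\in L^2((1,+\infty),\bbR)$ and $\widehat\Phi\in\clC([1,+\infty),H)$ with $\widehat\Phi(t)\in\fkS_H$, $\widehat\Phi(1)=\widehat\Phi_1$, $\dot{\widehat\Phi}(1)=0$, $\dot{\widehat\Phi},\ddot{\widehat\Phi}\in L^\infty$, $\norm{\dot{\widehat\Phi}}{L^\infty}+\norm{\ddot{\widehat\Phi}}{L^\infty}\le C_\Phi$, such that $\norm{y(t)}{V}\le C\ex^{-\mu(t-1)}\norm{Sy_0}{V}$ for $t\ge1$ and $\norm{\widehat u(Sy_0)}{L^2}\le\fkN_0\norm{Sy_0}{V}$. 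I then define $u^{\rm e}\coloneqq0$, $\Phi^{\rm e}\coloneqq\Phi^*$ on $[0,1]$ and $u^{\rm e}\coloneqq\widehat u$, $\Phi^{\rm e}\coloneqq\widehat\Phi$ on $[1,+\infty)$. Since $\Phi^*(1)=\widehat\Phi(1)$, the curve $\Phi^{\rm e}$ is continuous on $\bbR_0$; since in addition $\dot\Phi^*(1)=\dot{\widehat\Phi}(1)=0$ and both $\dot\Phi^*$ and $\dot{\widehat\Phi}$ are Lipschitz on their respective intervals, $\dot\Phi^{\rm e}$ is globally Lipschitz, hence $\dot\Phi^{\rm e}\in L^\infty(\bbR_0,H)\cap\clC([0,+\infty),H)$ and $\ddot\Phi^{\rm e}\in L^\infty(\bbR_0,H)$ (a jump of $\ddot\Phi^{\rm e}$ at $t=1$ being harmless). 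Moreover $\Phi^{\rm e}(t)\in\fkS_H$ for $t\ge0$, $\Phi^{\rm e}=\Phi^*$ on $[0,1]$, and the bound on $\norm{\dot\Phi^{\rm e}}{L^\infty}+\norm{\ddot\Phi^{\rm e}}{L^\infty}$ asserted in the statement follows by splitting the supremum at $t=1$ and using the bounds on $\Phi^*$ and $\widehat\Phi$.

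It remains to read off the state and control estimates. Clearly $u^{\rm e}\in L^2(\bbR_0,\bbR)$, since $u^{\rm e}$ vanishes on $[0,1]$ and coincides with $\widehat u$ afterward. For $t\in[0,1]$ one has $\norm{y(t)}{H}\le C_0\norm{y_0}{H}\le C_0\ex^{\mu}\ex^{-\mu t}\norm{y_0}{H}$, while for $t\ge1$, using the embedding $V\hookrightarrow H$, the smoothing bound, and Theorem~\ref{T:main0}, $\norm{y(t)}{H}\le C_2\norm{y(t)}{V}\le C_2C_1C\ex^{-\mu(t-1)}\norm{y_0}{H}$; hence $\norm{y(t)}{H}\le C^{\rm e}\ex^{-\mu t}\norm{y_0}{H}$ for all $t\ge0$, with $C^{\rm e}\ge1$ and the same decay rate $\mu$ as in Theorem~\ref{T:main0}. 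This is~\eqref{goal1-exp}, and in particular~\eqref{goal-Intro}. Finally $y_0\mapsto u^{\rm e}(y_0)$ is the composition of the continuous linear map $S\colon H\to V$ with the continuous map $Sy_0\mapsto\widehat u(Sy_0)$ provided by Theorem~\ref{T:main0}, so it is continuous from $H$ into $L^2(\bbR_0,\bbR)$, and $\norm{u^{\rm e}(y_0)}{L^2(\bbR_0,\bbR)}=\norm{\widehat u(Sy_0)}{L^2}\le C_1\fkN_0\norm{y_0}{H}$; this gives~\eqref{goal1-L2}, hence~\eqref{goal-Intro-L2}.

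The one point deserving an explicit word of justification is the use of Theorem~\ref{T:main0} started from $t=1$ rather than from $t=0$: one has to make sure that the norm-squeezing mechanism behind Assumption~\ref{A:MstaticAct} can be restarted at a time that need not be a multiple of $T$. I expect this to be harmless --- either because the proof of Theorem~\ref{T:main0} is insensitive to the location of the initial instant, or because the squeezing property over the intervals $(kT,(k+1)T)$, $k\in\bbN$, is verified (for the concrete parabolic equations treated in Section~\ref{S:proofT:main-Intro}) uniformly enough to be translated in time --- but it is the spot to watch; the remainder is the routine smoothing-and-concatenation bookkeeping sketched above.
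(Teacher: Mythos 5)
Your proposal is correct and follows essentially the same route as the paper's proof: free evolution with $u\equiv0$ and actuator $\Phi^*$ on $[0,1]$, parabolic smoothing to obtain $y(1)\in V$ with $\norm{y(1)}{V}\le C\norm{y_0}{H}$, then Theorem~\ref{T:main0} applied via the time shift $w(\tau)=y(1+\tau)$ on $[1,+\infty)$, and concatenation using the $\clC^1$-matching of the actuator at $t=1$. The restart issue you flag is handled in the paper only by the same substitution argument (the paper cites the smoothing lemma rather than re-deriving it, but otherwise the two arguments coincide).
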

\begin{proof}
For $t \in [0,1]$ we choose the control $u(t)\Phi^*(t)$ with $u =0$.
Using the smoothing property of parabolic-like equations
(cf.~\cite[Lem.~2.4]{BreKunRod17}),
we arrive at a state~$y(1)\eqqcolon y_1\in V$, with
 \begin{equation}\label{smooth}
  \norm{y(1)}{V}\le \ovlineC{C_{\rm rc}}\norm{y(0)}{H}.
 \end{equation}

In the time interval~$\bbR_1$,
we can find a control $u^1\in L^2(\bbR_1,\bbR)$ and a moving actuator~$\Phi^1$ as in Theorem~\ref{T:main0}, with~$\Phi^1(1)=\widehat\Phi_1$
and~$\dot\Phi^1(1)=0$, giving us
\begin{equation}\label{stabR1}
   \norm{y(t)}{V}\le \ovlineC{C_{\rm rc}}\ex^{-\mu (t-1)}\norm{y(1)}{V},\qquad t\ge1.
 \end{equation}
Indeed it is enough to consider a shift in time variable and use Theorem~\ref{T:main0} to the function
$w(\tau)=y(1+\tau)$, which solves the system
\begin{align}
 \tfrac{\ed}{\ed\tau} w+A w+\widetilde A_{\rm rc}w= u(\tau)\Phi(\tau),
 \quad w(0)=y_1,\quad \tau>0,\notag
\end{align}
with~$\widetilde A_{\rm rc}(\tau)= A_{\rm rc}(1+\tau)$. Hence obtaining
\begin{align}
 \norm{w(\tau)}{V}\le \ovlineC{C_{\rm rc,1}}\ex^{-\mu \tau}\norm{w(0)}{V}, \qquad \tau\ge0,\notag
\end{align}
with~$C_{\rm rc,1}=\norm{\widetilde A_{\rm rc}}{L^\infty(\bbR_0,\clL(V,H))}
= \norm{ A_{\rm rc}}{L^\infty(\bbR_1,\clL(V,H))}\le C_{\rm rc}$ which implies~\eqref{stabR1}, by taking for~$t\ge1$,
$u^1(t)=u(t-1)$ and~$\Phi^1(t)=\Phi(t-1)$.

Next, defining
\begin{align}
u^e(t)&=0&&\quad\mbox{and}\quad&&\Phi^e(t)=\Phi^*(t),&&\quad\mbox{for}\quad t\in[0,1),\notag\\
u^e(t)&=u^1(t) &&\quad\mbox{and}\quad &&\Phi^e(t)=\Phi^1(t),&&\quad\mbox{for}\quad t\ge1,\notag
\end{align}
we obtain, using~\eqref{stabR1} and~\eqref{smooth},
\begin{align}
  \norm{y(t)}{H}&\le\norm{\Id}{\clL(V,H)}
  \norm{y(t)}{V}\le \ovlineC{C_{\rm rc},\norm{\Id}{\clL(V,H)}}\ex^{-\mu (t-1)}\norm{y(1)}{V}\notag\\
  &\le\ovlineC{C_{\rm rc},\norm{\Id}{\clL(V,H)},\mu}\ex^{-\mu t}\norm{y(0)}{H},\qquad t\ge1.\notag
 \end{align}
 and (cf.~\cite[Lem.~2.2]{BreKunRod17})
\begin{align}
  \norm{y(t)}{H}&\le\ovlineC{C_{\rm rc}}\norm{y(0)}{H}\le\ovlineC{C_{\rm rc}}\ex^{\mu}\ex^{-\mu t}\norm{y(0)}{V}\notag\\
  &\le\ovlineC{C_{\rm rc},\mu}\ex^{-\mu t}\norm{y(0)}{H},\qquad t\in[0,1).\notag
 \end{align}
We can see that we can take~$C^{\rm e}$ of the form~$\ovlineC{C_{\rm rc},\norm{\Id}{\clL(V,H)},\mu}$ in~\eqref{goal1-exp}.

Using $\Phi^*(1)=\Phi^1(1)=\widehat\Phi_1$ and~$\dot\Phi^*(1)=\dot\Phi^1(1)=0$, we can conclude
that~$\Phi^e\in\clC^1([0,+\infty),H)$. Finally,
 by Theorem~\ref{T:main0} we have that~$\norm{\dot\Phi^1}{L^\infty(\bbR_1,H)}+\norm{\ddot\Phi^1}{L^\infty(\bbR_1,H)}\le C_\Phi$ with
$C_\Phi$ independent of~$y_1$ (and of~$\Phi^*$).
\end{proof}

We are going to use Assumption~\ref{A:MstaticAct} together with a concatenation argument, and will prove that
Theorem~\ref{T:main0} is a corollary of the following result concerning the restriction of our system to the intervals
\begin{equation}\label{Ik}
 I_k\coloneqq(kT,kT+T),\quad \overline{I_k}\coloneqq[kT,kT+T]\qquad k\in\bbN.
\end{equation}

\begin{theorem}\label{T:main}
 Under Assumptions~\ref{A:A0sp}--\ref{A:MstaticAct}, there exist a magnitude control function~$u_k$ and a continuous
moving actuator~$\Phi_k$ satisfying
\begin{align}
 &u_k\in L^2(I_k,\bbR),\quad\dot\Phi_k\in L^\infty(I_k,H),\quad \ddot\Phi_k\in L^\infty(I_k,H),\quad\Phi_k(t)\in\fkS_H\quad\mbox{for}\quad t\in \overline{I_k},\notag\\
&\Phi_k(kT)=\Phi_k(kT+T)=\widehat\Phi_1,\quad \dot\Phi_k(kT)=\dot\Phi_k(kT+T)=0,\notag
\end{align}
such that the solution of the system
 \begin{align}\label{sys-y-Ik}
  \dot y+Ay+A_{\rm rc}(t)y&=u_k(t)\Phi_k(t),&\qquad y(kT)&=\fkv\in V,
  \end{align}
satisfies
\begin{subequations}\label{goal2}
\begin{align}
 &\norm{y(kT+T)}{V}\le \tfrac{\theta+1}{2}\norm{\fkv}{V},\label{goal2-exp}
 \intertext{and, the mapping~$\fkv\mapsto u_k(\fkv)$ is continuous,}
 &\norm{u_k}{\clC(V,L^2(I_k,\bbR))}\eqqcolon\fkN_1<+\infty.\label{goal2-L2}
\end{align}
with~$\fkN_1$ independent of~$k\in\bbN$.
\end{subequations}
 \end{theorem}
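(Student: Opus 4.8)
The plan is to start from the static-actuator control supplied by Assumption~\ref{A:MstaticAct} on the interval $I_k=(kT,kT+T)$, which gives a control $\sum_{j=1}^M v_{k,j}(\fkv)(t)\widehat\Phi_j$ forcing the $V$-norm contraction $\norm{y(kT+T)}{V}\le\theta\norm{\fkv}{V}$, and to transform it — in two successive steps — into a single \emph{moving} actuator, paying only a controlled loss in the contraction factor (from $\theta$ to $\tfrac{\theta+1}{2}$) and in $L^2$-control cost. After a shift $t\mapsto t-kT$ it suffices to work on $(0,T)$, with $\widetilde A_{\rm rc}(\tau)=A_{\rm rc}(kT+\tau)$ satisfying the same uniform bound $C_{\rm rc}$, so that all estimates produced will be $k$-independent.

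\textbf{Step 1 (multiple static actuators $\to$ switching control with one active static actuator at a time).} The input $\sum_{j=1}^M v_{k,j}(t)\widehat\Phi_j$ is a measurable curve $t\mapsto (v_{k,1}(t),\dots,v_{k,M}(t))\in\bbR^M$. I would approximate the resulting forcing $g(t)\coloneqq\sum_j v_{k,j}(t)\widehat\Phi_j\in H$ by a \emph{switching} forcing $g_{\rm sw}(t)=\widetilde u(t)\widehat\Phi_{\sigma(t)}$, where at each instant only one index $\sigma(t)\in\{1,\dots,M\}$ is active. The natural mechanism is the continuity of the solution map of~\eqref{sys-y-static} with respect to the control in a \emph{relaxation metric} (the weak-$*$-type metric on measures / the metric induced by testing against continuous functions after integration in time): a switching control that oscillates fast enough among the $M$ actuators can approximate, in this metric, any prescribed convex-combination (relaxed) control, in particular $g$ itself — this is the classical chattering/relaxation idea, and the key quantitative input is that the state depends continuously on the control for this metric (to be established via an energy estimate and Grönwall, using Assumptions~\ref{A:A0sp}--\ref{A:A1} and the smoothing of the parabolic semigroup). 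Because $\theta<1$ strictly, choosing the approximation fine enough degrades the contraction factor only to, say, $\tfrac{2\theta+1}{3}<1$, while the $L^2$-norm of $\widetilde u$ stays bounded by a constant times $\fkK\norm{\fkv}{V}$ (each $\widehat\Phi_j$ being unit norm), and the map $\fkv\mapsto\widetilde u(\fkv)$ can be kept linear, hence continuous, with $k$-independent norm.

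\textbf{Step 2 (switching among static actuators $\to$ one moving actuator).} Now I have finitely many ``dwell'' subintervals of $(0,T)$, on the $i$-th of which the actuator sits at one of the fixed positions $\widehat\Phi_{j_i}\in\fkS_H$. I would replace these by a \emph{continuous} path $\Phi_k\colon\overline{I_k}\to\fkS_H$ that stays equal to $\widehat\Phi_{j_i}$ on the bulk of each dwell interval and, in short transition windows inserted between consecutive dwell intervals, travels along the sphere $\fkS_H$ from $\widehat\Phi_{j_i}$ to $\widehat\Phi_{j_{i+1}}$ — for instance along the great-circle arc $\cos\varphi(t)\,\widehat\Phi_{j_i}+\sin\varphi(t)\,\widehat\Phi_{j_i}^{\perp}$ in the 2-plane they span (if they are not collinear; otherwise insert an auxiliary intermediate unit vector), with a $C^2$ profile $\varphi$ starting and ending with zero derivative, so $\dot\Phi_k,\ddot\Phi_k\in L^\infty$ with bounds depending only on $M$, $T$ and the transition-window length. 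During the transition windows I set $u_k=0$, so the forcing there is $0$ and the state merely evolves under the (uniformly bounded) free dynamics; shrinking these windows makes that evolution as close to the identity as desired in operator norm on $V$ (again by the energy/smoothing estimate), so the overall map $\fkv\mapsto y(kT+T)$ still contracts, now by $\tfrac{\theta+1}{2}$. I also arrange the first and last dwell positions, and pad with trivial transition windows at the two ends of $\overline{I_k}$, so that $\Phi_k(kT)=\Phi_k(kT+T)=\widehat\Phi_1$ and $\dot\Phi_k(kT)=\dot\Phi_k(kT+T)=0$. The control $u_k$ is $\widetilde u$ extended by zero on the transition windows, so $\norm{u_k}{L^2(I_k,\bbR)}\le\fkN_1\norm{\fkv}{V}$ with $\fkN_1$ independent of $k$, and $\fkv\mapsto u_k(\fkv)$ is continuous (indeed linear).

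\textbf{Main obstacle.} The delicate point is Step~1: making the relaxation/chattering argument quantitative, i.e.\ proving that the state of~\eqref{sys-y-static} depends continuously — with a modulus that can be chosen uniformly in $k$ and in $\fkv$ on bounded sets of $V$ — on the control measured in the relaxation metric, strongly enough in the $V$-topology at the terminal time to preserve a strict contraction. This requires a careful energy estimate for the difference of two solutions driven by forcings that are close only in the weak (time-integrated) sense, exploiting that the difference of the two forcings is a distributional time-derivative of a small $L^\infty((0,T),H)$ function (an integration-by-parts in time), together with the parabolic smoothing/$V$-regularity of the semigroup generated by $A$ and the uniform bound $C_{\rm rc}$ on $\widetilde A_{\rm rc}$. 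Step~2, by contrast, is essentially a geometric construction on $\fkS_H$ plus a routine Grönwall estimate over short intervals with zero control.
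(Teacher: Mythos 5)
Your two-step plan (chattering/relaxation to a switching control with one active static actuator at a time, then continuous travel on the unit sphere between the actuator positions, with transition windows carved out of the dwell intervals) is exactly the route the paper takes; the paper merely splits it into five sub-steps and keeps a nonzero interpolated magnitude during the travel windows instead of setting $u_k=0$ there (either choice works, and your padding trick for the endpoint conditions on $\Phi_k$ is also what is needed). However, there is a genuine gap at precisely the point you flag as the main obstacle, and it is not repaired by ``parabolic smoothing'' as you suggest. Your integration-by-parts device controls the primitive $\clI(t)=\int_{kT}^{t}\bigl(g_{\rm sw}(s)-g(s)\bigr)\,\ed s$ only in $L^\infty(I_k,H)$, because the actuators $\widehat\Phi_j$ are only in $H$ (indicator functions, in the application). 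After the substitution $z=d-\clI$, where $d$ is the difference of the two states, the equation for $z$ carries the forcing $-A\clI-A_{\rm rc}\clI$; with $\clI(t)\in H$ only, $A\clI(t)$ lies merely in $\rmD(A)'$, so the energy estimate controls $z$, and hence $d(kT+T)=z(kT+T)$, only in a norm strictly weaker than $V$. That is not enough to preserve the $V$-norm contraction \eqref{goal2-exp}, which is the quantity the concatenation argument for Theorem~\ref{T:main0} needs. Smoothing cannot be invoked at the terminal time itself, and invoking it on a final subinterval would require a separate quantitative argument balancing the blow-up of the smoothing constant against the weak-norm smallness; you do not provide one.

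The missing ingredient, which the paper supplies, is to first replace each $\widehat\Phi_j\in H$ by a nearby $\widetilde\Phi_j\in\rmD(A)\cap\fkS_H$ (density of $\rmD(A)$ in $H$ plus a normalization lemma), run the entire relaxation-metric argument with the primitive measured in $L^\infty(I_k,\rmD(A))$ --- so that $-A\clI-A_{\rm rc}\clI$ is small in $L^2(I_k,H)$ and the standard estimate gives smallness of $z$ in $\clC(\overline{I_k},V)$ --- and only afterwards swap the $\widetilde\Phi_j$ back for the $\widehat\Phi_j$, which is a plain $L^2(I_k,H)$ perturbation of the right-hand side. Each of these two extra swaps costs another fixed fraction of $1-\theta$ in the contraction factor, which is why the paper budgets five losses of $\tfrac{1-\theta}{10}$. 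Two smaller points: the paper inserts an explicit step forcing all dwell intervals to have length bounded below before the travel windows are introduced (your construction implicitly assumes this, but some dwell intervals can degenerate to zero length); and the chattering control built with $\fkv$-dependent dwell lengths and signs, as in the paper, is continuous but not linear in $\fkv$ --- your claim of linearity would require fixing the dwell partition independently of $\fkv$, which is possible but is not what the natural construction gives.
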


\begin{proof}[Proof of Theorem~\ref{T:main0}]
We consider the concatenation of controls~$u_k$ given by Theorem~\ref{T:main} as follows
 \[
 u\Phi(y_0)=u_k\Phi_k(y(kT)),\quad\mbox{if}\quad t\in I_k,
   \]
where the construction of~$u$ is to be understood in a sequential manner: first we take~$u\Phi(y_0)\rest{I_0}=u_0\Phi_0(y(0T))=u_0\Phi_0(y_0)$,
then we consider the corresponding state~$y(T)$ at final time~$t=T$, which we then use to define~$u\Phi(y_0)\rest{I_1}=u_1\Phi_1(y(1T))$,
in this way, by concatenation, we have constructed a control on the interval~$I_0\bigcup I_1=(0,2T)$. Once we have constructed
the control~$u\Phi(y_0)\rest{(0,kT)}$ on~$(0,kT)$, we take~$u\Phi(y_0)\rest{I_k}=u_k\Phi_k(y(kT))$
and have a control defined for time~$t\in (0,(k+1)T)$. Eventually we will have~$u\Phi(y_0)$ defined in the entire time
interval~$\bbR_0$.

By~\eqref{goal2-exp},  we find that the solution associated to~$u\Phi(y_0)$ satisfies
\begin{equation}
\label{factor.dec.v}
\norm{y(kT+T)}{V}\le \tfrac{\theta+1}{2}\norm{y(kT)}{V}\le (\tfrac{\theta+1}{2})^{k+1}\norm{y(0)}{V},
\end{equation}
that is, since~$0<\theta<1$,
\begin{equation}\label{exp.dec.nat-t}
\norm{y(kT+T)}{H}\le \ex^{-\overline \mu(k+1)T}\norm{y(0)}{H},\quad\mbox{with}\quad
\overline \mu\coloneqq-\tfrac1{T}\log(\tfrac{\theta+1}2)>0.
\end{equation}
By a standard continuity argument (e.g., see~\cite[Lem.~2.3]{BreKunRod17},
recalling that~$\clC(\overline{I_k},V)\xhookrightarrow{}W(I_k,\rmD(A),H)$), we find that for~$t\ge0$,
\begin{align}
 \norm{y(t)}{V}^2&\le\ovlineC{T,C_{\rm rc}}\left(\norm{y(k T)}{V}^2
 +\norm{u_{k}\Phi}{L^2(I_{k},H)}^2\right),\quad\mbox{with}\quad
 k=\lfloor\tfrac{t}{T}\rfloor,
 \end{align}
 where~$\lfloor r\rfloor$ denotes the integer satisfying~$k\le r<k+1$, $r\in\bbR$.
 Since~$\norm{\Phi(t)}{H}=1$, it follows that, by  using~\eqref{factor.dec.v},
\begin{align}
 \norm{y(t)}{V}^2&\le\ovlineC{T,C_{\rm rc}, M}\left(\norm{y(k T)}{V}^2
 +\norm{u_{k}}{L^2(I_{k},\bbR^M)}^2\right)
 \le\ovlineC{T,C_{\rm rc}, M,\fkN_1}\norm{y(k T)}{V}^2\notag\\
 &\le\ovlineC{T,C_{\rm rc}, M,\fkN_1}\ex^{-\overline\mu k T}\norm{y(0)}{V}^2
 =\ovlineC{T,C_{\rm rc}, M,\fkN_1}\ex^{\overline\mu (t-k T)}
 \ex^{-\overline\mu t}\norm{y(0)}{V}^2\notag\\
 & \le\ovlineC{T,C_{\rm rc}, M,\fkN_1,\overline\mu}
 \ex^{-\overline\mu t}\norm{y(0)}{V}^2,\notag
\end{align}
because~$\ex^{\overline\mu (t-k T)}\le\ex^{\overline\mu  T}=\ovlineC{T,\overline\mu}$. Therefore,~\eqref{goal0-exp} holds true.

It remains to show~\eqref{goal0-L2}. Due to~\eqref{goal2-L2}  and~\eqref{factor.dec.v} we find
\begin{align}
 &\norm{u(y_0)}{L^2(\bbR_0,\bbR)}
 =\textstyle\sum\limits_{k=0}^\infty\norm{u_k(y(kT))}{L^2(I_k,\bbR)}\notag\\
 &\hspace*{2em}\le\fkN_1\textstyle\sum\limits_{k=0}^\infty\norm{y(kT)}{V}
 \le\fkN_1\norm{y(0)}{V} \textstyle\sum\limits_{k=0}^\infty\ex^{-\frac{\overline\mu}{2} kT}
 =\fkN_1\tfrac{1}{1-\ex^{-\frac{\overline\mu}{2}T }}\norm{y_0}{V},\notag
\end{align}
which gives us~\eqref{goal0-L2}, with~$\fkN_0=\fkN_1\tfrac{1}{1-\ex^{-\frac{\overline\mu}{2}T }}$.
\end{proof}

\begin{proof}[Proof of Theorem~\ref{T:main}] Let us fix an arbitrary~$k\in\bbN$.
By Assumption~\ref{A:MstaticAct}, we have that~$\clV_k(\fkv)(t)=\textstyle\sum\limits_{j=1}^Mv_{k,j}(\fkv)(t)\widehat\Phi_j$,
is a control function driving  system~\eqref{sys-y-static} from~$\fkv\in V$ at time~$t=kT$ to a state~$y(kT+T)$
at time~$t=kT+T$, with
a norm squeezed by a factor~$\theta\in(0,1)$.
The proof will follow by successive approximations of such control,
hence we start by denoting~$\clV_k^0=\clV_k$, where the superscript underlines that~$\clV_k^0$ is our starting control. Since~$k$
has been fixed, for simplicity we will omit the subscript~$k$ in the control, $\clV^0=\clV_k^0=\clV_k$.
\begin{equation}\label{v0}
 \clV^0(\fkv)(t)=\textstyle\sum\limits_{j=1}^Mv_j^0(\fkv)(t)\widehat\Phi_j,\qquad v_j^0(\fkv)(t)\coloneqq v_{k,j}(\fkv)(t),
 \qquad (\fkv,t)\in V\times I_k.
\end{equation}

Let us consider our dynamical system~\eqref{sys-y-static} with a general external forcing~$f$ as follows,
\begin{equation}\label{sys-y-fIk}
 \dot y+Ay+A_{\rm rc}y=f,\qquad y(kT)=\fkv\in V,\qquad t\in I_k,
\end{equation}
Denoting by~$y=\fkY_k(\fkv,f)$, $y(t)=\fkY_k(\fkv,f)(t)$ the solution of~\eqref{sys-y-fIk}. We can write
\begin{subequations}\label{prop.v0}
\begin{equation}
 \norm{\fkY_k(\fkv,\clV^0(\fkv))(kT+T)}{V}\le \theta\norm{\fkv}{V}
\end{equation}
where~$0\le\theta<1$. We see that~$\clV^0(\fkv)$ is a control based on the static actuators~$\widehat\Phi_j$, and recall that
\begin{align}
&\clV^0(\fkv)(t)\in\clS_{\widehat\Phi}\coloneqq\linspan\{\widehat\Phi_j\mid 1\le j\le M\}\subset H,\\
&v^0=(v^0_1,v^0_2,\dots,v^0_M)\in\clL(V,L^\infty(I_k,\bbR^M)),
\end{align}
\end{subequations}

The proof is completed into~$\ref{st-movH}$ main steps in Sections~\ref{sS:mainproof.staV}--\ref{sS:mainproof.movH},
where we construct suitable approximations of~$\clV^0$: $\clV^0\approx \clV^1\approx \clV^2\approx \clV^3\approx \clV^4\approx \clV^5$,
arriving at a moving control~$\clV^5=\clV^5(\fkv)$, taking values~$\clV^5(\fkv)(t)$ in~$H$, with
\[
 \norm{\fkY_k(\fkv,\clV^5(\fkv))(kT+T)}{V}\le \tfrac{1+\theta}{2}\norm{\fkv}{V}.
\]
That is, ~$\clV^5(\fkv)$ drives the system
from~$\fkv\in H$ at initial time~$t=kT$ to a state~$y(kT+T)$ at final time~$t=kT+T$
 with a norm squeezed by a factor~$\frac{1+\theta}{2}\in(\theta,1)$.
\end{proof}

 In each of remaining steps of the proof of Theorem~\ref{T:main} we will use a continuity argument for system~\eqref{sys-y-fIk}.
The main contents, in each step, are as follows.
\begin{enumerate}[noitemsep,topsep=5pt,parsep=5pt,partopsep=0pt,leftmargin=0em]%
\renewcommand{\theenumi}{{\sf\arabic{enumi}}} 
 \renewcommand{\labelenumi}{} 
 \item \textcircled{\bf s}~Step~\theenumi:\label{st-staDA}
 {\em Taking auxiliary static actuators in~$\rmD(A)$.}
In Section~\ref{sS:mainproof.staV}, we replace (i.e., approximate) our static
actuators~$\widehat\Phi_j\in H$ by suitable static actuators~$\widetilde\Phi_j\in \rmD(A)\subset H$.
In this way we obtain a control~$\clV^1(\fkv)(t)=\textstyle\sum\limits_{j=1}^Mv_j^0(\fkv)(t)\widetilde\Phi_j$,
taking values in~$\clS_{\widetilde\Phi}\coloneqq\linspan\{\widetilde\Phi_j\mid 1\le j\le M\}\subset \rmD(A)$.
Taking actuators in~$\rmD(A)$ is needed for
technical reasons, which play a role in Step~\ref{st-pccDA} of the proof.
\item \textcircled{\bf s}~Step~\theenumi:\label{st-pccDA}
 {\em Piecewise constant static control in~$\rmD(A)$.}
In Section~\ref{sS:mainproof.pccV}, we approximate the control~$\clV^1(\fkv)$, by a right-continuous piecewise
constant control~$\clV^2(\fkv)$ taking values~$\clV^2(\fkv)(t)$ in the
set~$\{s\widetilde\Phi_j\mid 1\le j\le M,\;-K\le s\le K\}\subset \rmD(A)$
for a suitable constant~$K>0$, for all $t\in I_k$.
\item \textcircled{\bf s}~Step~\theenumi:\label{st-pccH}
 {\em Piecewise constant static control in~$H$. Back to original actuators.}
In Section~\ref{sS:mainproof.pccH}, we replace back the~$\widetilde\Phi_j$s by the~$\widehat\Phi_j$s. In this way
we arrive at a piecewise constant control~$\clV^3(\fkv)$ defined as~$\clV^3(\fkv)(t)\coloneqq s\widehat\Phi_j$ if~$\clV^2(\fkv)(t)
=s\widetilde\Phi_j$
taking values in the set~$\{s\widehat\Phi_j\mid 1\le j\le M,\;-K\le s\le K\}\subset H$.
\item \textcircled{\bf s}~Step~\theenumi:\label{st-nondeg}
 {\em A piecewise constant static control with nondegenerate intervals of constancy.}
 In Section~\ref{sS:mainproof.nondeg} we construct a piecewise constant control~$\clV^4(\fkv)$ taking values~$\clV^4(\fkv)(t)$ in~$H$, where the lengths
 of the intervals of constancy are all larger than a suitable positive constant.
\item \textcircled{\bf s}~Step~\theenumi:\label{st-movH}
 {\em A moving control in~$H$.}
In Section~\ref{sS:mainproof.movH}, we construct a moving control~$\clV^5=\clV^5(\fkv)=u(t)\Phi(t)$ which
visits (several times) the positions of the static actuators~$\widehat\Phi_j$,
spending a suitable amount of time at those positions,
and travels, in~$H$, fast enough between
those positions. In this way we obtain a moving control~$\clV^5=\clV^5(\fkv)$, taking values~$\clV^5(\fkv)(t)$ in $H$.
\end{enumerate}

Steps~\ref{st-staDA} and~\ref{st-pccH} are needed only if some of our static actuators
are in~$H\setminus \rmD(A)$. This will, in general, be the case for indicator functions~$\indf_\omega$, $\omega\subset\Omega$,
for scalar parabolic equations evolving in bounded domains~$\Omega\subset\bbR^d$.

The continuity arguments in Steps~\ref{st-staDA},~\ref{st-pccH},~\ref{st-nondeg}, and~\ref{st-movH} are standard, namely the continuity
of the solution of system~\eqref{sys-y-fIk} on the right-hand side
as~$y=\fkY_k(\fkv,\Bigcdot)\in\clC(L^2(I_k,H),\clC(\overline{I_k},V))$.
The continuity argument in Step~\ref{st-pccDA} is less standard, involving
the continuity of the solution when the right-hand side
varies in the so called relaxation metric, details will be given in Section~\ref{sS:mainproof.pccV}.

\subsection{A static control taking values in $\rmD(A)$}\label{sS:mainproof.staV}
Observe that $\frac{1+\theta}{2}-\theta=\frac{1-\theta}{2}>0$.

Recall that (cf.~\cite[Lem.~2.3]{BreKunRod17},
recalling that~$\clC(\overline{I_k},V)\xhookrightarrow{}W(I_k,\rmD(A),H)$) the
solution of system~\eqref{sys-y-fIk}
satisfies
\begin{equation}\label{cont.YL2}
\norm{\fkY_k(\fkv,f)(t)}{V}^2\le D_Y\left(\norm{\fkv}{V}^2+\norm{f}{L^2(I_k,H)}^2\right),\quad t\in I_k=(kT,kT+T),
\end{equation}
with~$D_Y=\ovlineC{T,C_{\rm rc}}$ independent of~$k$, where~$C_{\rm rc}$ is defined in Assumption~\ref{A:A1}.

By  Assumption~\ref{A:MstaticAct},
the actuators~$\widehat \Phi_j$ are in the unit sphere~$\fkS_H$ of~$H$. Then, from~$\rmD(A)\xhookrightarrow{\rm d}H$ we can choose
a family~$\{\widetilde\Phi_j\mid 1\le j\le M\}$ such that
\begin{subequations}\label{choice-tildePhi}
 \begin{align}
 &\widetilde\Phi_j\in \rmD(A)\textstyle\bigcap \fkS_H,\\
 &\norm{\widetilde\Phi_j-\widehat\Phi_j}{H}
 \le\tfrac{1-\theta}{10}D_Y^{-\frac{1}{2}}T^{-\frac{1}{2}} \fkK^{-1}M^{-1},
 \qquad 1\le j\le M.
\end{align}
\end{subequations}

The fact that the~$\widetilde\Phi_j$ can be taken in the unit sphere is a corollary of the following result,
whose proof is given in
Section~\ref{sS:proofP:denseBall}.
\begin{proposition}\label{P:denseBall}
Let~$X\subset H$ be a vector space. Then, the density of~$X\subset H$ implies the density of~$X\bigcap \fkS_H\subset \fkS_H$.
\end{proposition}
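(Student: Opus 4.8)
The plan is to prove Proposition~\ref{P:denseBall} by an explicit normalization argument. Given~$h\in\fkS_H$ and~$\e>0$, I want to produce some~$x\in X\bigcap\fkS_H$ with~$\norm{x-h}{H}<\e$. Since~$X$ is dense in~$H$, I can pick~$z\in X$ with~$\norm{z-h}{H}$ as small as I like; the only issue is that~$z$ need not lie on the unit sphere. But~$X$ is a vector space, so~$x\coloneqq\norm{z}{H}^{-1}z\in X$ as well (provided~$z\neq0$, which is automatic once~$\norm{z-h}{H}<1$ since then~$\norm{z}{H}\ge1-\e>0$), and~$x\in\fkS_H$. It then remains to estimate~$\norm{x-h}{H}$ in terms of~$\norm{z-h}{H}$.

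The key estimate is the elementary inequality
\begin{equation*}
 \norm{\tfrac{z}{\norm{z}{H}}-h}{H}\le 2\norm{z-h}{H}\qquad\text{whenever}\qquad \norm{h}{H}=1.
\end{equation*}
To see this, write
\begin{equation*}
 \tfrac{z}{\norm{z}{H}}-h=\tfrac{1}{\norm{z}{H}}\bigl(z-h\bigr)+\tfrac{1}{\norm{z}{H}}\bigl(\norm{h}{H}-\norm{z}{H}\bigr)h,
\end{equation*}
and use~$\bigl|\norm{h}{H}-\norm{z}{H}\bigr|\le\norm{z-h}{H}$ together with~$\norm{h}{H}=1$, to obtain
\begin{equation*}
 \norm{\tfrac{z}{\norm{z}{H}}-h}{H}\le\tfrac{2}{\norm{z}{H}}\norm{z-h}{H}.
\end{equation*}
Finally, if one insists on the clean factor~$2$, simply require~$\norm{z-h}{H}\le\tfrac12$ at the outset, so that~$\norm{z}{H}\ge\tfrac12$ and the factor~$\tfrac{2}{\norm{z}{H}}\le 4$; in any case, choosing~$\norm{z-h}{H}<\min\{\tfrac12,\tfrac{\e}{4}\}$ yields~$\norm{x-h}{H}<\e$, which proves the density of~$X\bigcap\fkS_H$ in~$\fkS_H$.

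I do not expect any serious obstacle here; the statement is essentially a soft normalization fact. The only point requiring a little care is ensuring~$z\neq0$ before dividing, which is handled by taking the approximating~$z$ close enough to the unit vector~$h$, and keeping track of constants in the final estimate, which is routine. One could also phrase the argument in purely sequential terms: take~$z_n\in X$ with~$z_n\to h$, note~$z_n\neq0$ eventually, and observe~$z_n/\norm{z_n}{H}\to h/\norm{h}{H}=h$ by continuity of the maps~$z\mapsto\norm{z}{H}$ and~$(z,\lambda)\mapsto\lambda z$; this avoids any explicit inequality at the cost of being slightly less quantitative.
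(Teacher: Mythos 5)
Your proof is correct and follows essentially the same approach as the paper: approximate $h$ by a nonzero $z\in X$ and normalize, controlling $\norm{z/\norm{z}{H}-h}{H}$ via the reverse triangle inequality. The paper splits the error as $\norm{\widetilde h-\overline h}{H}+\norm{\overline h-h}{H}$ and uses $\norm{\widetilde h-\overline h}{H}=\bigl|1-\norm{\overline h}{H}\bigr|\le\norm{h-\overline h}{H}$, which yields a clean factor $2$ without needing the lower bound $\norm{z}{H}\ge\tfrac12$ that your decomposition requires, but this is a cosmetic difference.
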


Now we recall the control~$\clV^0$ in~\eqref{v0}, and define a new control as
\begin{equation}\label{v1}
 \clV^1(\fkv)(t)=\textstyle\sum\limits_{j=1}^Mv_j^1(\fkv)(t)\widetilde\Phi_j,\qquad v_j^1(\fkv)(t)\coloneqq v_j^0(\fkv)(t),
 \qquad (\fkv,t)\in V\times I_k.
\end{equation}
where we replace each actuator~$\widehat\Phi_j\in H$ by the auxiliary actuator~$\widetilde\Phi_j\in \rmD(A)\subset H$.

Note that, by Assumption~\ref{A:MstaticAct}, for~$v^1\coloneqq(v^1_1,v^1_2,\dots,v^1_{M})$, we find
\begin{equation}\label{supV1}
  \sup_{t\in I_k}\norm{v^1(\fkv)(t)}{\bbR^M}=\sup_{t\in I_k}\norm{v^0(\fkv)(t)}{\bbR^M}
 \le\fkK\norm{\fkv}{V}.
 \end{equation}

Let us denote~$d^1\coloneqq\fkY_k(\fkv,\clV^1(\fkv))-\fkY_k(\fkv,\clV^0(\fkv))$, which satisfies
\begin{equation}\label{sys-d1}
 \dot d^1+Ad^1+A_{\rm rc}d^1=\clV^1(\fkv)-\clV^0(\fkv),\quad\mbox{for}\quad t\in I_k,\qquad d^1(kT)=0.
\end{equation}
By~\eqref{cont.YL2} and~\eqref{choice-tildePhi}, we find that, since~$M\ge1$,
\begin{align}
 \norm{d^1(t)}{V}&\le D_Y^\frac{1}{2}\norm{\clV^1(\fkv)-\clV^0(\fkv)}{L^2(I_k,H)}
 \le \tfrac{1-\theta}{10}T^{-\frac{1}{2}}
  \fkK^{-1}M^{-1}\norm{v^0(\fkv)}{L^2(I_k,\bbR^M)}\notag\\
 &\le
 \tfrac{1-\theta}{10}T^{-\frac{1}{2}}\fkK^{-1}\norm{v^0(\fkv)}{L^2(I_k,\bbR^M)}
 \le \tfrac{1-\theta}{10}\fkK^{-1}\norm{v^0(\fkv)}{L^\infty(I_k,\bbR^M)}.\notag
\end{align}
Thus, using Assumption~\ref{A:MstaticAct},
\begin{align}\label{diff_V10}
 \norm{\fkY_k(\fkv,\clV^1(\fkv))(t)-\fkY_k(\fkv,\clV^0(\fkv))(t)}{V}&\le \tfrac{1-\theta}{10}\norm{\fkv}{V},
 \quad\mbox{for all}\quad t\in\overline{I_k}.
\end{align}

\subsection{A piecewise constant static control taking values in $\rmD(A)$}\label{sS:mainproof.pccV}
Let us denote the closed unit ball in~$\rmD(A)$ by~$\overline\fkB_{\rmD(A)}$.
Recall the control~$\clV^1(\fkv)(t)$,  defined in~\eqref{v1}, taking values
in~$\clS_{\widetilde\Phi}=\linspan\{\widetilde\Phi_j\mid 1\le j\le M\}$.
We will prove that the solution of~\eqref{sys-y-fIk} varies continuously in~$\clC(\overline{I_k},V)$
when the external forcing varies
continuously in the so called (weak) relaxation metric (cf.~\cite[Ch.~3]{Gamk78})
\begin{equation}\label{relax.metric}
 \fkD_{I_k}^{\rm wrx}(f,g)
 \coloneqq\sup_{t\in\overline{I_k}}\norm{\int_{kT}^t f(s)-g(s)\,\ed s}{\rmD(A)} ,\qquad \{f,g\}\subset
 L^\infty(I_k,\clS_{\widetilde\Phi}\textstyle\bigcap K\overline\fkB_{\rmD(A)}),
\end{equation}
for a given~$K>0$. Hence we will approximate~$\clV^1(\fkv)$ by a piecewise constant control~$\clV^2(\fkv)$ in such a metric.
We underline
here that~$f$ and~$g$ above are functions taking their values in the {\em bounded}
subset~$\clS_{\widetilde\Phi}\bigcap K\overline\fkB_{\rmD(A)}$ of the {\em finite dimensional}
subspace~$\clS_{\widetilde\Phi}\subset {\rmD(A)}$.

As the reference~\cite{Gamk78} shows, such continuity is known in control theory of
ordinary differential equations. It has also been used to derive
(approximate) controllability results for partial
differential equations, see for example~\cite[Sect.~12.3]{AgraSary05}, \cite[Sect.~6.3]{AgraSary06}, \cite[Sect.~9]{Rod06},
\cite[Sect.~3.2.2]{Rod-Thesis08}.

We  follow a variation
of the procedure in~\cite[Ch.~3]{Gamk78}, which allows us to construct a piecewise constant control
taking values in~$\{s\widetilde\Phi_j\mid 1\le j\le M,\; -K\le s\le K\}$, for a suitable fixed~$K>0$, see~\eqref{v2}.
The fact that the control takes its values in a subset of the cone~$\{r\widetilde\Phi_j\mid 1\le j\le M,\;r\in\bbR\}$ will be important
in Section~\ref{sS:mainproof.movH}. With respect to this, we would like to refer also
to~\cite[Lemma~3.5]{Shirikyan07}, for a different approximation
involving piecewise constant controls, but where the control is allowed to take
values which are not necessarily in the cone above.

In order to construct a piecewise constant control, we start with a partition of the time interval~$I_k$
into~$N$ subintervals of constant size~$\frac{T}{N}$,
\[
 I_{k,n}\coloneqq (kT+(n-1)\tfrac{T}{N}, kT+n\tfrac{T}{N}),
 \qquad  1\le n\le N.
\]
and we denote~$\overline{I_{k,n}}\coloneqq [kT+(n-1)\tfrac{T}{N}, kT+n\tfrac{T}{N}]$.
We are going to construct a piecewise constant control
on each of the subintervals~$I_{k,n}$ with exactly~$2M$
subintervals of constancy (possibly with vanishing length) where each of the~$M$ actuators~$\widetilde\Phi_j$,
$1\le j\le M$ will be active in exactly two of such intervals.

We start by defining the nonnegative constant
\begin{equation}\label{Sigma}
\Sigma_{n}(\fkv)\coloneqq\frac{N}{T}\sum\limits_{m=1}^M\norm{\int_{I_{k,n}}v^1_m(\fkv)(t)\,\ed t}{\bbR}.
\end{equation}
Observe that
 \begin{equation}\notag
\Sigma_{n}(\fkv)\le \frac{N}{T}\int_{I_{k,n}}\sum\limits_{m=1}^M\norm{v^1_m(\fkv)(t)}{\bbR}\,\ed t
\le\frac{N}{T}M\int_{I_{k,n}}\norm{v^1(\fkv)(t)}{\bbR^M}\,\ed t
 \end{equation}
 and, by~\eqref{supV1}, it follows that
 \begin{equation}\label{Sigma.bound.h}
 \Sigma_{n}(\fkv)\le M\fkK \norm{\fkv}{V}.
 \end{equation}

To simplify the exposition we denote
\begin{equation}\label{tildePhi-ext}
 \left\{\begin{array}{l}\widetilde\Phi_{2M+1-j}\coloneqq\widetilde\Phi_j,\\
v^1_{2M+1-j}(\fkv)(t)\coloneqq v^1_{j}(\fkv)(t),
\end{array}\right.\qquad 1\le j\le M.
\end{equation}

Next, we consider the cases~$\Sigma_{n}(\fkv)\ne0$ and~$\Sigma_{n}(\fkv)=0$ separately.

\medskip\noindent
{$\bullet$ \sc The case~$\Sigma_{n}(\fkv)\ne0$.}
We rewrite our control~$\clV^1(\fkv)$, as
\begin{align}
 \clV^1(\fkv)(t)
 &=\textstyle\sum\limits_{j=1}^Mv_j^1(\fkv)(t)\widetilde\Phi_j
   = \textstyle\sum\limits_{j=1}^M \frac{v_j^1(\fkv)(t)}{\Sigma_{n}(\fkv)}\Sigma_{n}(\fkv)\widetilde\Phi_j = \textstyle\sum\limits_{j=1}^{2M} \frac{v_j^1(\fkv)(t)}{2\Sigma_{n}(\fkv)}\Sigma_{n}(\fkv)\widetilde\Phi_j.\notag
 \end{align}

\begin{subequations}\label{length-IC}
Let us denote
\begin{equation}\label{lknj}
 l_{k,n,j}=l_{k,n,2M+1-j}\coloneqq\textstyle\frac{1}{2\Sigma_{n}(\fkv)}\int_{I_{k,n}} v^1_j(\fkv)(t)\,\ed t,
 \qquad 1\le j\le M.
\end{equation}
We define a piecewise constant control in each interval~$I_{k,n}\subset I_k$, where the lengths of the intervals of constancy
are given by
\begin{equation}\label{lengths-intc}
 \norm{l_{k,n,j}}{}\coloneqq\norm{l_{k,n,j}}{\bbR}.
\end{equation}
Observe that
\begin{equation}
 \textstyle\sum\limits_{j=1}^{2M}\norm{l_{k,n,j}}{}
 =2\textstyle\frac{1}{2\Sigma_{n}(\fkv)}\sum\limits_{j=1}^{M}\norm{\int_{I_{k,n}}v^1_j(\fkv)(t)\,\ed t}{\bbR}=\tfrac{T}{N}.
\end{equation}
 \end{subequations}
Note also that some of the lengths may vanish.

Next, we denote the switching time instants~$t_{k,n,j}=t_{k,n,j}^{[N]}$ as follows
\begin{subequations}\label{switching.n0}
 \begin{align}
t_{k,n,0}&\coloneqq  kT+(n-1)\tfrac{T}{N},\\
t_{k,n,j}&\coloneqq  kT+(n-1)\tfrac{T}{N}
+\textstyle\sum\limits_{m=1}^j\norm{l_{k,n,m}}{},
\qquad 1\le j\le 2M.
\end{align}
\end{subequations}
In particular, we have~$t_{k,n,2M}=kT+n\tfrac{T}{N}$.

We define 
\begin{subequations}\label{pcc-VN.Sig.n0}
\begin{align}
&\hspace*{0em}I_{k,n,j}\coloneqq [t_{k,n,j-1},t_{k,n,j}),\qquad &1\le j\le 2M.\\
&\hspace*{0em}\clV_{[N]}(\fkv)(t)=\sign(l_{k,n,j})\Sigma_{n}(\fkv)\widetilde\Phi_j,
\quad\mbox{if}\quad t\in I_{k,n,j},\quad &1\le j\le 2M.
\end{align}
 \end{subequations}

\medskip\noindent
{$\bullet$ \sc The case~$\Sigma_{n}(\fkv)=0$.}
We define
$\clV_{[N]}(\fkv)(t)= 0$, for all~$t\in I_{k,n}$. Which we can still rewrite as a piecewise constant control as follows.

Firstly we define
\begin{equation}\label{switching.0}
 t_{k,n,0}\coloneqq  kT+(n-1)\tfrac{T}{N},\quad t_{k,n,j}\coloneqq  kT+(n-1)\tfrac{T}{N}+j\tfrac{T}{2MN},
\quad 1\le j\le 2M.
\end{equation}
and, then we set analogously to~\eqref{pcc-VN.Sig.n0},
\begin{subequations}\label{pcc-VN.Sig.0}
\begin{align}
&\hspace*{0em}I_{k,n,j}\coloneqq [t_{k,n,j-1},t_{k,n,j})= [kT+(n-1)\tfrac{T}{N}+j\tfrac{T}{2MN}),\qquad &1\le j\le 2M.\\
&\hspace*{0em}\clV_{[N]}(\fkv)(t)=\sign(l_{k,n,j})\Sigma_{n}(\fkv)\widetilde\Phi_j=0\widetilde\Phi_j,
\quad\mbox{if}\quad t\in I_{k,n,j},\quad &1\le j\le 2M.
\end{align}
 \end{subequations}

\bigskip
In either case we obtain a piecewise constant control in the entire interval~$I_k$. Observe that
$\clV_{[N]}(\fkv)(t)$ tells us that we activate the actuators~$\widetilde\Phi_j$ in each interval~$I_{k,n}$ in the order
\begin{equation}\notag
 \widetilde\Phi_1\to\widetilde\Phi_2\to\dots\to\widetilde\Phi_{M-1}\to\widetilde\Phi_M
 \to\widetilde\Phi_{M+1}\to\widetilde\Phi_{M+2}\to\dots\to\widetilde\Phi_{2M-1}\to\widetilde\Phi_{2M},
\end{equation}
which is the same, by~\eqref{tildePhi-ext}, as the cycle
\begin{equation}\label{cycle}
 \widetilde\Phi_1\to\widetilde\Phi_2\to\dots\to\widetilde\Phi_{M-1}\to\widetilde\Phi_M
 \to\widetilde\Phi_M\to\widetilde\Phi_{M-1}\to\dots\to\widetilde\Phi_2\to\widetilde\Phi_1.
\end{equation}
Some actuators may be active in degenerate intervals of length zero.
The actuators are activated with the same input of constant  magnitude~$\sign(l_{k,n,j})\Sigma_{n}(\fkv)$.

Next,  we show that~$\clV_{[N]}(\fkv)(t)$ approaches~$\clV^1(\fkv)(t)$ in the relaxation metric~\eqref{relax.metric}.
We set
\begin{align}
\clI_{[N]}(t)\coloneqq\int_{kT}^t\left(\clV_{[N]}(\fkv)(s)-\clV^1(\fkv)(s)\right)\ed s.\notag
\end{align}
Then
\begin{align}
 \fkD_{I_k}^{\rm wrx}(\clV_{[N]}(\fkv),\clV^1(\fkv))
 =\sup_{t\in\overline{I_k}}\norm{\clI_{[N]}(t)}{\rmD(A)}.\notag
\end{align}

We show now that~$\clI_{[N]}$ vanishes at the extrema of the intervals~$I_{k,n}$.
Clearly
\begin{equation}\label{INvan.ext.base}
 \clI_{[N]}(kT)=0.
\end{equation}
Further, if we assume that
$\clI_{[N]}(kT+n\frac{T}{N})=0$ for a given~$0\le n\le N-1$, then:

\noindent
$\bullet$ if~$\Sigma_n(\fkv)\ne0$ we obtain
\begin{align}
\clI_{[N]}(kT+(n+1)\tfrac{T}{N})&= \int_{I_{k,n}}\left(\clV_{[N]}(\fkv)(s)-\clV^1(\fkv)(s)\right)\,\ed s\notag\\
&= {\textstyle\sum\limits_{j=1}^{2M}}\norm{l_{k,n,j}}{}\sign(l_{k,n,j})
\Sigma_{n}(\fkv)\widetilde\Phi_j-\int_{I_{k,n}}\clV^1(\fkv)(s)\,\ed s\notag\\
&= {\textstyle\sum\limits_{j=1}^{2M}}{\frac{1}{2} }\int_{I_{k,n}}v_j^1(\fkv)(s)\,\ed s
\widetilde\Phi_j-\int_{I_{k,n}}\clV^1(\fkv)(s)\,\ed s=0.\notag
\end{align}

\noindent
$\bullet$ if~$\Sigma_n(\fkv)=0$ we obtain
\begin{align}
\clI_{[N]}(kT+(n+1)\tfrac{T}{N})&= \int_{I_{k,n}}\left(0-\clV^1(\fkv)(s)\right)\,\ed s=0.\notag
\end{align}

Therefore, in either case we have that
\begin{equation}\label{INvan.ext.ind}
 \clI_{[N]}(kT+n\tfrac{T}{N})=0\quad\Longrightarrow\quad\clI_{[N]}(kT+(n+1)\tfrac{T}{N})=0,\qquad 0\le n\le N-1.
\end{equation}

From~\eqref{INvan.ext.base} and~\eqref{INvan.ext.ind}, by induction we can conclude that
\begin{equation}\label{INvan.ext}
\clI_{[N]}(kT+n\tfrac{T}{N})=0,\quad\mbox{for all}\quad n\in\{0,1,2,\dots,N\}.
\end{equation}

Now for an arbitrary~$t\in I_{k,n}$ we find
\begin{align}
 \norm{\clI_{[N]}(t)}{\rmD(A)}
 &\le\tfrac{T}{N}\left(\sup\limits_{s\in I_{k,n}}\norm{\clV_{[N]}(\fkv)(s)}{\rmD(A)}
 +\sup\limits_{s\in I_{k,n}}\norm{\clV^1(\fkv)(s)}{\rmD(A)} \right)\notag\\
 &\le\tfrac{T}{N}\left(\Sigma_{n}(\fkv)\sup\limits_{1\le j\le M}\norm{\widetilde\Phi_j}{\rmD(A)}
 +\sup\limits_{s\in I_{k}}\norm{v^1(\fkv)(s)}{\bbR^M}\sup\limits_{1\le j\le M}\norm{\widetilde\Phi_j}{\rmD(A)} \right)\notag
  \end{align}
  and by~\eqref{Sigma.bound.h} and Assumption~\ref{A:MstaticAct},
 \begin{align}
 &\norm{\clI_{[N]}(t)}{\rmD(A)}
 \le\tfrac{T}{N}\left(M\fkK\norm{\fkv}{V}
 +\fkK\norm{\fkv}{V} \right)\sup\limits_{1\le j\le M}\norm{\widetilde\Phi_j}{\rmD(A)}\notag\\
 &\hspace{2em}\le\tfrac{T}{N}(M+1)\dnorm{\widetilde \Phi}{}\fkK\norm{\fkv}{V},\qquad t\in
 \overline{I_{k}},\qquad\dnorm{\widetilde \Phi}{}\coloneqq\sup\limits_{1\le j\le M}\norm{\widetilde\Phi_j}{\rmD(A)}.
 \label{bound.relax}
\end{align}

 Next we show the continuity of the solution when the right-hand side control varies in the relaxation metric.
 Let us denote $d_N\coloneqq\fkY_k(\fkv,\clV_{[N]}(\fkv))-\fkY_k(\fkv,\clV^1(\fkv))$, and observe that~$d_N$ satisfies~\eqref{sys-y-fIk}, as
 \[
\dot d_N+Ad_N+A_{\rm rc} d_N=\clV_{[N]}(\fkv)-\clV^1(\fkv),\qquad d_N(kT)=0.
 \]
With~$z_N\coloneqq d_N-\clI_{[N]}$, we see that~$\dot z_N=\dot d_N-\dot \clI_{[N]}=-Ad_N-A_{\rm rc} d_N$, which implies
\begin{align}
&\dot z_N+Az_N+A_{\rm rc} z_N=-A\clI_{[N]}-A_{\rm rc} \clI_{[N]},\qquad z_N(kT)=0,\notag\\
\intertext{and also, by~\eqref{INvan.ext},}
&z_N(kT+n\tfrac{T}{N})=d_N(kT+n\tfrac{T}{N}),\qquad 0\le n\le N.\label{zN=dN.n}
 \end{align}

 Therefore, for $z_N=\fkY_k(0,-A\clI_{[N]}-A_{\rm rc} \clI_{[N]})$ we obtain, see~\eqref{cont.YL2},
\begin{align}
 \norm{z_N(t)}{V}^2\le D_Y\norm{-A\clI_{[N]}-A_{\rm rc} \clI_{[N]}}{L^2(I_k,H)}^2,\qquad t\in\overline{I_k}.\notag
\end{align}
By standard computations we find, for all~$t\in\overline{I_k}$,
\begin{align}
 \norm{z_N(t)}{V}^2&\le D_Y\left(\norm{A\clI_{[N]}}{L^2(I_k,H)}+\norm{A_{\rm rc} \clI_{[N]}}{L^2(I_k,H)}\right)^2\notag\\
 &\le D_Y\left(\norm{\clI_{[N]}}{L^2(I_k,\rmD(A))}+ C_{\rm rc} \norm{\clI_{[N]}}{L^2(I_k,V)}\right)^2\notag\\
 &= D_Y(1+C_{\rm rc}\norm{\Id}{\clL(\rmD(A),V)})^2T\norm{\clI_{[N]}}{L^\infty(I_k,\rmD(A))}^2,\notag
\end{align}
  and using~\eqref{bound.relax},
 \begin{align}
 \norm{z_N(t)}{V}&\le \frac{1}{N}D_Y^\frac12(1+C_{\rm rc}\norm{\Id}{\clL(\rmD(A),V)})
 T^\frac{3}{2}(M+1)\dnorm{\widetilde \Phi}{}\fkK\norm{\fkv}{V}.\notag
 \end{align}
  Now we can take~$N$ large enough, namely
 \begin{equation}\label{hatN}
  N=\widehat N\ge D_Y^\frac12(1+C_{\rm rc}\norm{\Id}{\clL(\rmD(A),V)})T^\frac32(M+1)\dnorm{\widetilde \Phi}{}\fkK
  \tfrac{10}{1-\theta},
 \end{equation}
 in order to obtain~$\norm{z_{\widehat N}(t)}{V}\le\tfrac{1-\theta}{10}\norm{\fkv}{V}$.
 Then, we set
 \begin{equation}\label{v2}
 \clV^2(\fkv)(t)\coloneqq \clV_{[\widehat N]}(\fkv)(t)=\sign(l_{k,n,j})\Sigma_{n}(\fkv)\widetilde\Phi_j,
\quad\mbox{if}\;\; t\in I_{k,n,j},\;\; 1\le j\le 2M,
 \end{equation}
 where the intervals~$I_{k,n,j}$ are defined as in~\eqref{pcc-VN.Sig.n0} and~\eqref{pcc-VN.Sig.0},
 \begin{equation}\label{v2-Int}
  I_{k,n,j} =I_{k,n,j}^{[\widehat N]}=[t_{k,n,j-1}^{[\widehat N]},t_{k,n,j}^{[\widehat N]})=[t_{k,n,j-1},t_{k,n,j}).
 \end{equation}
 We find, using~\eqref{bound.relax},
\begin{subequations}\label{diff_V21}
\begin{align}
 &\norm{\fkY_k(\fkv,\clV^2(\fkv))(t)-\fkY_k(\fkv,\clV^1(\fkv))(t)}{V}
 =\norm{d_{\widehat N}(t)}{V}\le \norm{z_{\widehat N}(t)}{V}+\norm{\clI_{[\widehat N]}(t)}{V}\notag\\
 &\hspace*{4em}\le \tfrac{1-\theta}{10}\norm{\fkv}{V}+\tfrac{T}{\widehat N}(M+1)\dnorm{\widetilde \Phi}{}
 \fkK\norm{\fkv}{V},
 \quad\mbox{for all}\quad t\in\overline{I_k},
 \intertext{and, using~\eqref{zN=dN.n},}
 &\norm{\fkY_k(\fkv,\clV^2(\fkv))(kT+T)-\fkY_k(\fkv,\clV^1(\fkv))(kT+T)}{V}
 =\norm{d_{\widehat N}(kT+T)}{V}\notag\\
 &\hspace*{4em}=\norm{z_{\widehat N}(kT+T)}{V}
 \le \tfrac{1-\theta}{10}\norm{\fkv}{V}.
\end{align}
\end{subequations}
\begin{remark}
The chosen cycle~\eqref{cycle} is not unique. For example, we could adapt the proof for the cycle $\widetilde\Phi_1\to\widetilde\Phi_2\to\widetilde\Phi_{M-1}\to\widetilde\Phi_M\to\widetilde\Phi_1$.
\end{remark}

\subsection{A piecewise constant static control taking values in $H$}\label{sS:mainproof.pccH}
To simplify the exposition we denote
\begin{equation}\label{hatPhi-ext}
 \widehat\Phi_{2M+1-j}\coloneqq\widehat\Phi_j,\qquad 1\le j\le M.
\end{equation}
Recall that~$\clV^2(\fkv)$ takes its values~$\clV^2(\fkv)(t)$ in the set~$\{\pm\Sigma_{n}(\fkv)\widetilde\Phi_j\}\subset\rmD(A)$,
for~$t\in\overline{I_k}$. We define a new
piecewise constant control~$\clV^3(\fkv)(t)$, taking its values in the
set~$\{\pm\Sigma_{n}(\fkv)\widehat\Phi_j\}\subset H$, by
\begin{equation}\label{v3}
\clV^3(\fkv)(t)=\sign(l_{k,n,j})\Sigma_{n}(\fkv)\widehat\Phi_j,
 \qquad\mbox{for}\quad \clV^2(\fkv)(t)=\sign(l_{k,n,j})\Sigma_{n}(\fkv)\widetilde\Phi_j.
\end{equation}
Using~\eqref{Sigma.bound.h}, we can see that the corresponding solutions satisfy
\begin{align}
 &\norm{\fkY_k(\fkv,\clV^3(\fkv))(t)-\fkY_k(\fkv,\clV^2(\fkv))(t)}{V}^2
 \le D_Y\norm{\clV^3(\fkv)-\clV^2(\fkv)}{L^2(I_k,H)}^2\notag\\
 &\hspace*{3em} \le D_Y T\sup_{1\le n\le N}\Sigma_n(\fkv)^2\max_{1\le j\le M}\norm{\widetilde\Phi_j-\widehat\Phi_j}{H}^2\notag\\
 &\hspace*{3em} \le D_Y TM^2\fkK^2\norm{\fkv}{V}^2\max_{1\le j\le M}\norm{\widetilde\Phi_j-\widehat\Phi_j}{H}^2\notag
 \end{align}
 and by~\eqref{choice-tildePhi},
 \begin{subequations}\label{diff_V32}
 \begin{align}
 &\norm{\fkY_k(\fkv,\clV^3(\fkv))(t)-\fkY_k(\fkv,\clV^2(\fkv))(t)}{{ V}}
 \le \tfrac{1-\theta}{10}\norm{\fkv}{{V}},\qquad t\in\overline{I_k}.
  \end{align}
Note also that
\begin{align}
&\norm{\clV^3(\fkv)(t)}{V}\le\sup_{1\le n\le N}\Sigma_n(\fkv)\le M\fkK\norm{\fkv}{V},\qquad t\in \overline{I_k},\\
&\clV^3(\fkv)(kT+(n-1)\tfrac{T}{N})\in\{\pm\Sigma_{n}(\fkv)\widehat\Phi_1\},\qquad 1\le n\le\widehat N.
\end{align}
\end{subequations}

Observe that~$\clV^2$ switches between the actuators~$\widetilde\Phi_j$ as described in~\eqref{cycle}, and hence~$\clV^3$
switches between the actuators~$\widehat\Phi_j$ as in the analogous cycle
\begin{equation}\label{cycle-hat}
 \widehat\Phi_1\to\widehat\Phi_2\to\dots\to\widehat\Phi_{M-1}\to\widehat\Phi_M
 \to\widehat\Phi_M\to\widehat\Phi_{M-1}\to\dots\to\widehat\Phi_2\to\widehat\Phi_1,
\end{equation}
which, due to~\eqref{tildePhi-ext}, results in
\begin{equation}\notag
 \widehat\Phi_1\to\widehat\Phi_2\to\dots\to\widehat\Phi_{M-1}\to\widehat\Phi_M
 \to\widehat\Phi_{M+1}\to\widehat\Phi_{M+2}\to\dots\to\widehat\Phi_{2M-1}\to\widehat\Phi_{2M}.
\end{equation}
\subsection{A control with no degenerate intervals of constancy}\label{sS:mainproof.nondeg}
By construction, the length~$\norm{l_{k,n,j}}{}$ vanishes if $v^1_j(\fkv)$ has zero average on~$I_{k,n}$, see~\eqref{lknj}.
 It will be convenient, also to simplify the exposition in Section~\ref{sS:mainproof.movH} below, that
all the intervals of constancy have a length larger than a suitable positive constant.

Recall the switching time instants~$t_{k,n,j}$ on each interval~$\overline{I_{k,n}}\subset \overline{I_k}$, see~\eqref{v2-Int},
\begin{subequations}\label{switch-order}
 \begin{align}
 t_{k,n,0}\le t_{k,n,1}\le t_{k,n,2}\le \dots\le  t_{k,n,2M-1}\le t_{k,n,2M},\qquad 1\le n\le\widehat N,
  \intertext{and recall that}
 \bft_{k,n-1}\coloneqq kT+(n-1)\tfrac{T}{\widehat N}= t_{k,n,0}\quad\mbox{and}\quad \bft_{k,n}=kT+n\tfrac{T}{\widehat N}= t_{k,n,2M}.
\end{align}
\end{subequations}
To guarantee nondegenerate intervals of constancy we will define new switching time instants satisfying
\begin{subequations}\label{switch-order-nondeg}
\begin{align}
t^\e_{k,n,0} < t^\e_{k,n,1}<  t^\e_{k,n,2}< \dots\le  t^\e_{k,n,2M-1}< t^\e_{k,n,2M}
\intertext{with}
 t^\e_{k,n,0}\coloneqq\bft_{k,n-1}\quad\mbox{and}\quad t^\e_{k,n,2M} \coloneqq \bft_{k,n}.
 \end{align}
\end{subequations}
To do so we fix a positive number~$\e>0$, and define
\begin{subequations}\label{vartheta}
\begin{align}
 t^\e_{k,n,j}&=\bft_{k,n-1} +\vartheta_\e(t_{k,n,j}-\bft_{k,n-1}+\tfrac{j+1}{2}j\e),\qquad 0\le j\le 2M,
 \intertext{with}
 \vartheta_\e&\coloneqq
 \tfrac{T}{T+\widehat N(2M+1)M\e}.
\end{align}
\end{subequations}
Now we show that, indeed,  the sequence~\eqref{vartheta} satisfies~\eqref{switch-order-nondeg}.
We find
\begin{subequations}\label{switch-order-nondeg-proof}
\begin{align}
t^\e_{k,n,0}&=\bft_{k,n-1}+\vartheta_\e 0=\bft_{k,n-1},\\
t^\e_{k,n,2M}&=\bft_{k,n-1}+\vartheta_\e(\tfrac{T}{\widehat N}+(2M+1)M\e)=\bft_{k,n-1}+\tfrac{T}{\widehat N}=\bft_{k,n},\\
t^\e_{k,n,j}-t^\e_{k,n,j-1}&=\vartheta_\e(t_{k,n,j}-t_{k,n,j-1}+(\tfrac{j+1}{2}j-\tfrac{j}{2}(j-1))\e),\notag\\
&\ge\vartheta_\e(\tfrac{j+1}{2}j-\tfrac{j}{2}(j-1))\e=\e\vartheta_\e j>0,\qquad 1\le j\le 2M.
\end{align}
\end{subequations}
From~\eqref{switch-order-nondeg-proof} we see that~\eqref{switch-order-nondeg} is satisfied.

Next we define the piecewise constant control, for time~$t\in I_k$, as follows
\begin{align}
\clV_\e(\fkv)(t)\coloneqq\sign(l_{k,n,j})\Sigma_n(\fkv)\widehat\Phi_j,
&\quad\mbox{if}\quad t\in [t^\e_{k,n,j-1},t^\e_{k,n,j}),\label{v.eps}\\
&\quad\mbox{for}\quad1\le n\le\widehat N,\quad 1\le j\le 2M.\notag
\end{align}
where the intervals of constancy have a positive minimum length, see~\eqref{switch-order-nondeg-proof},
\begin{equation}\label{ell.eps}
 \min_{1\le j\le 2M}\{t^\e_{k,n,j-1}-t^\e_{k,n,j-1}\}\ge\e\vartheta_\e  >0.
\end{equation}
Observe that, from~\eqref{v2} and~\eqref{v3}, we have that
\[
\clV^3(\fkv)(t)=\sign(l_{k,n,j})\Sigma_n(\fkv)\widehat\Phi_j,\quad\mbox{if}\quad t\in [t_{k,n,j-1},t_{k,n,j}).
\]

Note that as~$\e\to0$ we have~$t^\e_{k,n,j}\to t_{k,n,j}$.
Now we show that we also have~$\clV_\e(\fkv)(t)\to \clV^3(\fkv)(t)$ in~$L^2(I_k,H)$, as~$\e\to0$.

\begin{proposition}\label{P:cont.tswitch-L2}
 Let $[a,b]\in\bbR$ be a nonempty interval, $a<b$, let $X$ be a Banach space, and let~$K$ be positive integer.
 Let us be given a
 finite sequence in~$X$
 \begin{align}
&\phi_j\in X,\qquad 1\le j\le K,
\intertext{and two finite sequences in~$[a,b]$,}
  &a=\tau_0\le \tau_1\le \dots \le\tau_{K-1}\le\tau_{K}= b
  \quad\mbox{and}\quad a=\sigma_0\le \sigma_1\le \dots \le\sigma_{K-1}\le\sigma_{K}= b.\notag
\end{align}
Then, for the following two functions defined for~$t\in(a,b)$ by
\begin{align}
 f_\tau(t)\coloneqq  \phi_j\quad\mbox{if}\quad t\in [\tau_{j-1},\tau_j)
 \qquad\mbox{and}\qquad
 f_\sigma(t)\coloneqq  \phi_j\quad\mbox{if}\quad t\in [\sigma_{j-1},\sigma_j),\notag
\end{align}
we have the estimate
\[
 \norm{f_\tau-f_\sigma}{L^2((a,b),X)}\le K^\frac{1}{2}\clR^\frac{1}{2}\clX,
\]
with~$\clR\coloneqq\max\limits_{0\le j\le K}\norm{\tau_j-\sigma_j}{\bbR}$
and~$\clX\coloneqq\max\limits_{1\le i,j\le K}\norm{\phi_j-\phi_i}{X}$.
\end{proposition}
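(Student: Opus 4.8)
The plan is to localize the set on which $f_\tau$ and $f_\sigma$ disagree and to estimate its Lebesgue measure. First I would record the elementary fact that the half-open intervals $[\tau_{j-1},\tau_j)$, $1\le j\le K$, are pairwise disjoint and have union $[\tau_0,\tau_K)=[a,b)$, so they partition $[a,b)$; hence $f_\tau$ is well defined (as a simple, and in particular strongly measurable, $X$-valued function) on $(a,b)$ up to the null set $\{a\}$, and degenerate intervals of constancy of zero length contribute nothing. The same applies to $f_\sigma$. Thus for almost every $t\in(a,b)$ there exist $i,j\in\{1,\dots,K\}$ with $t\in[\tau_{j-1},\tau_j)\cap[\sigma_{i-1},\sigma_i)$, in which case $f_\tau(t)=\phi_j$ and $f_\sigma(t)=\phi_i$; if $i=j$ the two values coincide.

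Next I would prove the inclusion
\[
 \bigl\{t\in(a,b)\mid f_\tau(t)\ne f_\sigma(t)\bigr\}\subseteq\bigcup_{m=1}^{K-1}J_m,\qquad J_m\coloneqq\bigl[\min\{\tau_m,\sigma_m\},\,\max\{\tau_m,\sigma_m\}\bigr).
\]
Indeed, suppose $t\in[\tau_{j-1},\tau_j)\cap[\sigma_{i-1},\sigma_i)$ with $i\ne j$. If $i<j$ then $i\le j-1$, so by monotonicity $\sigma_i\le\sigma_{j-1}$, and therefore $\tau_{j-1}\le t<\sigma_i\le\sigma_{j-1}$; since $i\ge1$ forces $j\ge2$ and $j\le K$, this gives $t\in J_{j-1}$ with $1\le j-1\le K-1$. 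If $i>j$ then $i-1\ge j$, so $\sigma_j\le\sigma_{i-1}\le t<\tau_j$; since $i\le K$ forces $j\le K-1$ and $j\ge1$, this gives $t\in J_j$ with $1\le j\le K-1$. In either case $t$ lies in one of the $J_m$, proving the inclusion; note that the endpoints $\tau_0=\sigma_0=a$ and $\tau_K=\sigma_K=b$ never occur, consistent with the fact that the corresponding terms in $\clR=\max_{0\le j\le K}\norm{\tau_j-\sigma_j}{\bbR}$ vanish.

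Finally, since $|J_m|=\norm{\tau_m-\sigma_m}{\bbR}\le\clR$, the disagreement set has measure at most $(K-1)\clR\le K\clR$, while on that set $\norm{f_\tau(t)-f_\sigma(t)}{X}\le\norm{\phi_j-\phi_i}{X}\le\clX$. Integrating over the disagreement set,
\[
 \norm{f_\tau-f_\sigma}{L^2((a,b),X)}^2=\int_{\{f_\tau\ne f_\sigma\}}\norm{f_\tau(t)-f_\sigma(t)}{X}^2\,\ed t\le\clX^2\,K\clR,
\]
which is the asserted bound. I do not anticipate any real obstacle: the only slightly delicate point is the index bookkeeping when some intervals of constancy have length zero, which is precisely why I would phrase everything in terms of the half-open-interval partitions of $[a,b)$ rather than an enumeration of the nonempty pieces.
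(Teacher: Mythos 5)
Your proof is correct. It rests on the same observation as the paper's proof---namely that $f_\tau$ and $f_\sigma$ can disagree only on the ``mismatch windows'' between corresponding switching times, each of width $\norm{\tau_m-\sigma_m}{\bbR}\le\clR$---but you organize it differently. The paper partitions $[a,b)$ by the points $\overline t_j=\max\{\tau_j,\sigma_j\}$ and runs an induction on $j$, showing at each step that the integrand vanishes on $(\overline t_i,\underline t_{i+1})$ and contributes at most $\clR\clX^2$ on $[\underline t_{i+1},\overline t_{i+1})$. You instead prove directly, via the two-case index argument ($i<j$ versus $i>j$), the inclusion of the disagreement set in $\bigcup_{m=1}^{K-1}[\min\{\tau_m,\sigma_m\},\max\{\tau_m,\sigma_m\})$ and then integrate over that set. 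Your index bookkeeping is sound (in particular the degenerate intervals cause no trouble, since the half-open intervals still partition $[a,b)$ and each $t$ lies in exactly one nonempty piece), and your route even yields the marginally sharper factor $(K-1)^{\frac12}\clR^{\frac12}\clX$, since the terms $m=0$ and $m=K$ never contribute; this is of no consequence for the application, but it is a small bonus of making the disagreement set explicit rather than proceeding by induction.
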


The proof of Proposition~\ref{P:cont.tswitch-L2} is given in Section~\ref{sS:proofP:cont.tswitch-L2}.

From Proposition~\ref{P:cont.tswitch-L2} it follows that
\begin{align}
&\norm{\clV_\e(\fkv)- \clV^3(\fkv)}{L^2(I_{k},H)}\notag\\
&\hspace*{3em}\le (2M\widehat N)^\frac12
 \max\limits_{\begin{subarray}{l}0\le j\le 2M\\
               1\le n\le \widehat N
              \end{subarray}
}\norm{t^\e_{k,n,j}-t_{k,n,j}}{\bbR}^\frac{1}{2}
 \max\limits_{\begin{subarray}{l}1\le i,j\le M\\
               1\le n\le \widehat N
              \end{subarray}
}\norm{\Sigma_n(\fkv)\widehat\Phi_j-\Sigma_n(\fkv)\widehat\Phi_i}{H}\notag\\
 &\hspace*{3em}\le 2(2M\widehat N)^\frac12\max\limits_{1\le n\le \widehat N}\Sigma_n(\fkv) \max\limits_{\begin{subarray}{l}0\le j\le 2M\\
               1\le n\le \widehat N
              \end{subarray}}\norm{t^\e_{k,n,j}-t_{k,n,j}}{\bbR}^\frac{1}{2}.\notag
 \end{align}

 Recalling~\eqref{Sigma.bound.h}, we arrive at
 \begin{equation}\label{Ve-V3-1}
  \norm{\clV_\e(\fkv)- \clV^3(\fkv)}{L^2(I_{k},H)}\le (2M)^\frac32\widehat N^\frac12 \fkK\norm{\fkv}{V}\max\limits_{\begin{subarray}{l}0\le j\le 2M\\
               1\le n\le \widehat N
              \end{subarray}}\norm{t^\e_{k,n,j}-t_{k,n,j}}{\bbR}^\frac{1}{2}.
 \end{equation}

Next, from~\eqref{vartheta} we find that
\begin{align}
 \norm{t^\e_{k,n,j}-t_{k,n,j}}{\bbR}&=\norm{\bft_{k,n-1} -t_{k,n,j}
 +\vartheta_\e(t_{k,n,j}-\bft_{k,n-1}+\tfrac{j+1}{2}j\e)}{\bbR}\notag\\
 &=\norm{(\vartheta_\e-1)(t_{k,n,j}-\bft_{k,n-1})+\tfrac{j+1}{2}j\e\vartheta_\e}{\bbR}\notag\\
 &\le(1-\vartheta_\e)\tfrac{T}{\widehat N}+(2M+1)M\e\vartheta_\e\eqqcolon\varTheta(\e).\label{te-t}
  \end{align}

 By combining~\eqref{Ve-V3-1} and~\eqref{te-t}, we obtain that
 \begin{equation}\label{Ve-V3-2}
  \norm{\clV_\e(\fkv)- \clV^3(\fkv)}{L^2(I_{k},H)}\le (2M)^\frac32\widehat N^\frac12 \fkK\norm{\fkv}{V}\varTheta(\e)^\frac12,
   \end{equation}
and by using~\eqref{cont.YL2} it follows that
 \begin{align}\label{diffVe3}
 \hspace*{-.5em}\norm{\fkY_k(\fkv,V_e(\fkv))(t)-\fkY_k(\fkv,\clV^3(\fkv))(t)}{V}
 \le (8M^3D_Y\widehat N)^\frac12\fkK\varTheta(\e)^\frac12\norm{\fkv}{V},\quad t\in \overline{I_k}.
  \end{align}

From~\eqref{vartheta} we also find
 \begin{align}
  1-\vartheta_\e&=
  \tfrac{\widehat N(2M+1)M\e}{T+\widehat N(2M+1)M\e},\qquad \e\vartheta_\e=\tfrac{T\e}{T+\widehat N(2M+1)M\e},\notag
 \end{align}
and
\[
 1-\vartheta_\e\to0,\quad\e\vartheta_\e\to0,\quad\mbox{and}\quad\varTheta(\e)\to0,\qquad\mbox{as}\quad\e\to0.
\]
Therefore, there exists $\widehat\e$ small enough, so that
\begin{equation}\label{hatepsil}
 \varTheta(\widehat\e)\le (8M^3D_Y\widehat N)^{-1}\fkK^{-2}(\tfrac{1-\theta}{10})^2.
\end{equation}

Now we set the control
\begin{subequations}\label{v4}
\begin{align}
&\clV^4(\fkv)=\clV_{\widehat\e}(\fkv),\qquad t\in I_{k},
\intertext{with nondegenerate intervals of constancy (cf.~\eqref{ell.eps}),}
  &\min_{1\le j\le 2M}\{t^{\widehat\e}_{k,n,j}-t^{\widehat\e}_{k,n,j-1}\}\ge\widehat\e\vartheta_{\widehat\e}  >0.
 \end{align}
 \end{subequations}
From~\eqref{diffVe3}, and~\eqref{hatepsil}, it follows  that
 \begin{equation}\label{diff_V43}
  \norm{\fkY_k(\fkv,\clV^4(\fkv))(t)-\fkY_k(\fkv,\clV^3(\fkv))(t)}{V}\le \tfrac{1-\theta}{10}\norm{\fkv}{V},
  \qquad t\in \overline{I_{k}}.
  \end{equation}

\subsection{A continuously moving control taking values in $H$}\label{sS:mainproof.movH}

We will travel in~$H$ between the static actuators~$\widehat\Phi_i$, following the cycle~\eqref{cycle-hat}.

For traveling we fix a set of roads, in the unit sphere~$\fkS_H$, connecting the static actuators, as follows:
\begin{subequations}\label{roads-pcc}
\begin{align}
 &\fkR_j\colon \clC^p([0,1],\fkS_H),&&\quad  1\le j\le M-1,\quad p\in\bbN,\\
 &\hspace*{1em}\mbox{with}\quad \fkR_j(0)=\widehat\Phi_j\quad\mbox{and}\quad\fkR_j(1)=\widehat\Phi_{j+1},\\
 &\fkR_{M}(s)=\widehat\Phi_M, &&\quad s\in[0,1], \\
 &\fkR_{M+j}(s)=\fkR_{M-j}(1-s),&& \quad 1\le j\le M-1.
\end{align}

Note that by~\eqref{hatPhi-ext}, we also have
\begin{align}
 &\fkR_{M+j}(0)=\fkR_{M-j}(1)=\widehat\Phi_{M-j+1}=\widehat\Phi_{M+j},\\
 &\fkR_{M+j}(1)=\fkR_{M-j}(0)=\widehat\Phi_{M-j}=\widehat\Phi_{M+j+1}.
\end{align}
\end{subequations}

We also introduce the scalar function
\begin{subequations}\label{roads-scalar}
\begin{align}
 &r_{k,n,j}^{\widehat\e,\xi}(t)\coloneqq
 \left(\tfrac{\xi+t_{k,n,j}^{\widehat\e}-t}{2\xi}\right)\sign(l_{k,n,j})
 +\left(\tfrac{\xi-t_{k,n,j}^{\widehat\e}+t}{2\xi}\right)\sign(l_{k,n,j+1}),
 \intertext{with, recall~\eqref{v4},}
 &\hspace*{6em}\xi\in(0,\tfrac{\widehat \e\vartheta_{\widehat \e}}{2}).
\end{align}
\end{subequations}

Then we define a moving control~$\clV_\xi(\fkv)$, for~$t\in \overline{I_k}$ as follows:
\begin{subequations}\label{pcc-VNxi}
\begin{align}
&\clV_\xi(\fkv)(t)\coloneqq
\sign(l_{k,n,1})\Sigma_{n}(\fkv)\widehat\Phi_1,\\
&\hspace*{2em} \mbox{if}\quad t\in [kT+(n-1)\tfrac{T}{\widehat N}, t_{k,n,1}^{\widehat\e}-\xi],\qquad 1\le n\le\widehat N.\notag\\
&\clV_\xi(\fkv)(t)\coloneqq
\sign(l_{k,n,2M})\Sigma_{n}(\fkv)\widehat\Phi_1,\\
&\hspace*{2em} \mbox{if}\quad t\in [t_{k,n,2M-1}^{\widehat\e}+\xi,kT+n\tfrac{T}{\widehat N}],\qquad 1\le n\le\widehat N.\notag\\
&\clV_\xi(\fkv)(t)\coloneqq
\sign(l_{k,n,j})\Sigma_{n}(\fkv)\widehat\Phi_j,\\
&\hspace*{2em} \mbox{if}\quad t\in [t^{\widehat\e}_{k,n,j-1}+\xi,t^{\widehat\e}_{k,n,j}-\xi ],
\qquad 1\le n\le\widehat N,\quad 2\le j\le 2M-1.\notag\\
&\clV_\xi(\fkv)(t)\coloneqq r_{k,n,j}^{\widehat\e,\xi}(t)\Sigma_n(\fkv) { \fkR_j}(\tfrac{\xi-t_{k,n,j}^{\widehat\e}+t}{2\xi}),\\
 &\hspace*{2em} \mbox{if}\quad t\in [t^{\widehat\e}_{k,n,j}-\xi,t^{\widehat\e}_{k,n,j}+\xi ],
 \qquad 1\le n\le\widehat N,\quad 1\le j\le 2M-1.\notag
\end{align}
\end{subequations}

Observe that $\clV_\xi$ differs from~$\clV^4$ only in the intervals~$(t^{\widehat\e}_{k,n,j}-\xi,t^{\widehat\e}_{k,n,j}+\xi )$,
$ 1\le n\le\widehat N,\quad 1\le j\le 2M-1$, when we travel from the static actuator~$\widehat\Phi_j$ to
the static actuator~$\widehat\Phi_{j+1}$.
These are exactly~$\widehat N(2M-1)$ intervals, where each has length~$2\xi$. Thus
\begin{align}
 \norm{\clV_\xi(\fkv)-\clV^4(\fkv)}{L^2(I_k,H)}^2
 &\le 2\xi\widehat N(2M-1)\max_{\begin{subarray}{l}
 1\le n\le\widehat N\notag\\
 1\le j\le 2M-1\notag
 \end{subarray}}
 \left\{\norm{r_{k,n,j}^{\widehat\e,\xi}(t)}{\bbR}^2\Sigma_n(\fkv)^2\norm{\widehat\Phi_j}{H}^2\right\}.
 \end{align}
 Since~$\norm{r_{k,n,j}^{\widehat\e,\xi}(t)}{\bbR}\le1$ and $\norm{\widehat\Phi_j}{H}=1$, using~\eqref{Sigma.bound.h}, we arrive at
 \begin{align}
 \norm{\clV_\xi(\fkv)-\clV^4(\fkv)}{L^2(I_k,H)}^2&\le 2\xi\widehat N(2M-1)M^2\fkK^2\norm{\fkv}{H}^2.\notag
  \end{align}

  Recalling~\eqref{cont.YL2}, we obtain
  \begin{align}
  &\norm{\fkY_k(\fkv,\clV_\xi(\fkv))(t)-\fkY_k(\fkv,\clV^4(\fkv))(t)}{V}^2\le D_Y\norm{\clV_\xi(\fkv)-\clV^4(\fkv)}{L^2(I_k,H)}^2\notag\\
  &\hspace*{4em}\le D_Y2\xi\widehat N(2M-1)M^2\fkK^2\norm{\fkv}{V}^2,\qquad t\in I_k.\label{diff_Vxi4}
  \end{align}

 Now choosing small enough~$\xi$, namely
\begin{equation}\label{hat-xi}
 \xi=\widehat\xi\coloneqq \min\left\{\frac{\widehat \e\vartheta_{\widehat \e}}{2},
 \left(D_Y2\widehat N(2M-1)M^2\fkK^2\right)^{-1}(\tfrac{1-\theta}{10})^2\right\},
\end{equation}
and setting
\begin{equation}\label{v5}
 \clV^5(\fkv))(t)\coloneqq \clV_{\widehat\xi}(\fkv))(t),\qquad t\in I_k,
\end{equation}
we find
\begin{equation}\label{diff_V54}
 \norm{\fkY_k(\fkv,\clV^5(\fkv))(t)-\fkY_k(\fkv,\clV^4(\fkv))(t)}{H}
 \le \tfrac{1-\theta}{10}\norm{\fkv}{H}.
\end{equation}

Finally, note that $\clV^5(\fkv)$ is a moving control of the form
\begin{subequations}\label{v5-prop}
 \begin{equation}
  \clV^5(\fkv)(t)\eqqcolon u(t)\Phi(t),\;\; t\in \overline{I_k},\;\;\mbox{with}\;\;\norm{u(t)}{H}\le M\fkK\norm{\fkv}{V},\;\;
  \Phi(t)\in\fkS_H,
\end{equation}
for suitable~$u\in L^\infty(\overline{I_k},\bbR)$ and~$\Phi\in\clC(\overline{I_k},\fkS_H)$. Furthermore,
by choosing~$p\ge0$ in~\eqref{roads-pcc} we can obtain a regular motion of the actuator. Namely, if
we have
\begin{align}
 \tfrac{\ed^q}{\ed s^q}\rest{s=0}\fkR_j=0=\tfrac{\ed^q}{\ed s^q}\rest{s=1}\fkR_j, \qquad 1\le q\le p,
 \end{align}
 then
\begin{align}
 &\Phi\in\clC^p(\overline{I_k},\fkS_H)
 \intertext{and}
 & \max_{\tau\in\overline{I_k}}\norm{\tfrac{\ed^p}{\ed t^p}\rest{t=\tau}\Phi}{H}\le(\tfrac{1}{2\widehat\xi})^p
 \max_{1\le j\le M}\max_{s_0\in[0,1]}\norm{\tfrac{\ed^p}{\ed s^p}\rest{s=s_0}\fkR_j}{H}.\notag
 \intertext{In particular, we have that}
 &\norm{\Phi}{\clC^p(\overline{I_k},\fkS_H)}\le(\tfrac{1}{2\widehat\xi})^p
 \max_{1\le j\le M}\norm{\fkR_j}{\clC^p([0,1],H)}.
  \end{align}
  \end{subequations}

\medskip\noindent
{\em Conclusion of the proof of Theorem~\ref{T:main}.}  By
using~\eqref{diff_V10}, \eqref{diff_V21}, \eqref{diff_V32}, \eqref{diff_V43}, and~\eqref{diff_V54},
together with the triangle inequality, we arrive at
\[
 \norm{\fkY_k(\fkv,\clV^5(\fkv))(kT+T)-\fkY_k(\fkv,\clV^0(\fkv))(kT+T)}{H}\le 5\tfrac{1-\theta}{10}\norm{\fkv}{H}=\tfrac{1-\theta}{2}\norm{\fkv}{H}.
\]
 Finally, note that the choice of~$\widehat\varepsilon$ in~\eqref{hatepsil} and that of~$\xi$ in~\eqref{hat-xi} are independent of~$y_0$.
This finishes the proof of Theorem~\ref{T:main}. \qed

\begin{remark}
Observe that the actuators~$\widetilde\Phi_j$ in~\eqref{choice-tildePhi}, the integer~$\widehat N$ in~\eqref{hatN},
the parameter~$\widehat\e$ in~\eqref{hatepsil}, and the parameter~$\widehat\xi$ in~\eqref{hat-xi}, were all  chosen independently
of~$k\in\bbN$.
Furthermore, from~\eqref{v5-prop} we can also see that~$\norm{\Phi}{\clC^p(\overline{I_k},\fkS_H)}$
is bounded by a constant independent of~$k\in\bbN$. Again from~\eqref{v5-prop}, by recalling Assumption~\ref{A:MstaticAct} we also have
$\norm{u}{L^\infty(I_k,H)}\le M\fkK\norm{\fkv}{V}=M\fkK\norm{y(kT)}{V}$ with the product~$M\fkK$ independent of~$k\in\bbN$.
 \end{remark}

\section{Proof of Theorem~\ref{T:main-Intro}}\label{S:proofT:main-Intro}
We start by writing~\eqref{sys-y-parab-Intro} as
\begin{align}\label{sys-y-parab}
 \dot y+A y+A_{\rm rc}y=u\widehat
 \indf_{\omega(c)},\quad y(0)=y_0,\quad t>0,
\end{align}
with~$A\coloneqq-\nu\Delta +\Id$  and~$A_{\rm rc}z=A_{\rm rc}(t)z\coloneqq(a(t,\Bigcdot)-1)z+b(t,\Bigcdot)\cdot\nabla z$.

It is not hard to check that Assumptions~\ref{A:A0sp},~\ref{A:A0cdc}, and~\ref{A:A1}, are satisfied by~$A\in\clL(V,V')$
and~$A_{\rm rc}\in L^\infty(\bbR_0,\clL(V,H))$,
namely with~$V=H^1_0(\Omega)$ in the case of Dirichlet boundary conditions and with~$V=H^1(\Omega)$
(see, e.g.,~\cite[Sect.5.1]{Rod20-eect}).

\subsection{Satisfiability of Assumption~\ref{A:MstaticAct}}
Assumption~\ref{A:MstaticAct} follows from the results in~\cite[Thm.~4.5]{Rod20-eect} (when applied to linear equations),
from which we know that
\begin{align}\label{sys-y-parab-obli}
 \dot y+A y+A_{\rm rc}y=P_{U_M}^{E_M^\perp}\left(A_{\rm rc}y-\lambda y\right),\quad y(0)=y_0,\quad t>0,
\end{align}
is a stable system for a suitable oblique projection~$P_{U_M}^{E_M^\perp}$, where~$\lambda>0$. Namely, its solution satisfies,
\begin{align}\label{sys-y-parab-obli-exp}
 \norm{y(t)}{ V}\le C\ex^{-\mu (t-s)}\norm{y(s)}{ V}, \qquad t\ge s\ge0,
\end{align}
with~$C\ge 1$ and~$\mu\ge 0$ independent of~$(t,s)$.
Actually in~\cite[Thm.~4.5]{Rod20-eect} only the case~$s=0$ is mentioned, however by a time shift argument~$t\eqqcolon s+\tau$,
$w(\tau)=y(s+\tau)$, $\widetilde A_{\rm rc}(\tau)=A_{\rm rc}(s+\tau)$, we can
rewrite~\eqref{sys-y-parab-obli} as
\begin{align}
 \tfrac{\ed}{\ed\tau} w+A w+\widetilde A_{\rm rc}w=P_{U_M}^{E_M^\perp}\left(\widetilde A_{\rm rc}w-\lambda w\right),
 \quad w(0)=y(s),\quad \tau>0,\notag
\end{align}
and the results in~\cite[Thm.~4.5]{Rod20-eect} give us
\begin{align}
 \norm{w(\tau)}{V}\le C_s\ex^{-\mu \tau}\norm{y(s)}{V}, \qquad \tau\ge0,\notag
\end{align}
which is equivalent to~\eqref{sys-y-parab-obli-exp}. The constant~$C_s$ is of the
form~$\ovlineC{\norm{\widetilde A_{\rm rc}}{L^\infty(\bbR_0,\clL(V,H))}}$, and so
$C_s\le C_0$, that is we can take~$C$ independent of~$s$ in~\eqref{sys-y-parab-obli-exp}.

This stability result in~\cite[Thm.~4.5]{Rod20-eect} holds for large enough~$M$,
where~$U_M=\linspan\{\indf_{\omega_j}\mid 1\le j\le M\}$
is the span of suitable indicator functions supported
in small rectangles~$\omega_j\subset\Omega$. The operator~$P_{U_M}^{E_M}$ is the
oblique projection in~$L^2(\Omega)$ onto~$U_M$ along an auxiliary space~$E_M^\perp$, where~$E_M$ is the span of a suitable
set of eigenfunctions of the diffusion~$A$ defined in~$L^2(\Omega)$, where~$\Omega$ is a bounded rectangular domain.
For precise definitions of suitable~$U_M$ and~$E_M$ we refer to~\cite[Sect.~4.8.1]{KunRod18-cocv} and~\cite[Sect.~2.2]{Rod20-eect}.
Furthermore, for such  choice we have
\begin{equation}\label{bound-OP}
 \sup_{M\ge1}\norm{P_{U_M}^{E_M^\perp}}{\clL(H)}\eqqcolon\dnorm{P}{}<+\infty.
\end{equation}

Observe that $U_M$ is the range of~$P_{U_M}^{E_M}$, hence our control is of the form
\[
 P_{U_M}^{E_M^\perp}\left(A_{\rm rc}y-\lambda y\right)\eqqcolon \sum_{j=1}^M\widetilde v_j(t)\indf_{\omega_j}
 =\sum_{j=1}^Mv_j(t)\widehat\Phi_j\eqqcolon\bfv(t)
\]

In particular, from~\eqref{sys-y-parab-obli-exp}, for any given~$\theta\in(0,1)$ we have that, for all~$k\in\bbN$,
\[
 \norm{y(kT+T)}{V}\le C\ex^{-\mu T}\norm{y(kT)}{V}\le \theta\norm{y(kT)}{V},
 \quad\mbox{if}\quad T\ge\mu^{-1}\log(\tfrac{C}{\theta}).
\]

To prove that Assumption~\ref{A:MstaticAct} is satisfied, it remains to show that the~$v_j(t)$ are appropriately
essentially bounded. From~\eqref{sys-y-parab-obli-exp} and~$P_{U_M}^{E_M^\perp}=P_{U_M}^{E_M^\perp}P_{E_M}$,
we find
\begin{align}
\norm{\bfv(t)}{H}&\le\dnorm{P}{}\norm{P_{E_M}\left(A_{\rm rc}y-\lambda y\right)}{H}\notag\\
 &\le
 \left(\norm{A_{\rm rc}}{L^\infty(\bbR_0,\clL(V,H))}+\lambda\norm{\Id}{\clL(V,H)}\right)\dnorm{P}{} \norm{y(t)}{V}\notag\\
 &\le
 \left(C_{\rm rc}+\lambda\norm{\Id}{\clL(V,H)}\right)\dnorm{P}{} C\ex^{-\mu (t-kT)}\norm{y(kT)}{V},\qquad t\ge kT.\label{control2.0}
\end{align}
which implies that
\begin{align}
\norm{\bfv(t)}{H}&\le
 \fkK_0\norm{y(kT)}{V},\quad t\in \overline{I_k}=[kT,kT+T].\label{control2.1}
\end{align}
with~$\fkK_0=\left(C_{\rm rc}+\lambda\norm{\Id}{\clL(V,H)}\right)\dnorm{P}{} C$ independent of~$k$.

Therefore,
Assumption~\ref{A:MstaticAct} holds for~$M$ large enough, and with $T=\mu^{-1}\log(\frac{C}{\theta})$ and~$\fkK=\fkK_0$ as above.

\subsection{Illustration of a path for the moving actuator}\label{sS:illus-react}
We consider the static actuators $\indf_{\omega_i}=\indf_{\omega(c^i)}$
with center~$c^i$ as in~\cite[Sect.~4.8.1]{KunRod18-cocv}, illustrated in figure~\ref{fig.suppsensors}.
Then we order the actuators and consider the corresponding cycle. For example,  as illustrated in the figure for the case~$S=3$,
we start at the first actuator in the
bottom-left corner, going up until the ~$M$th actuator (here, $M=9$) in the top-right corner and returning  back down to the bottom-left corner.

\setlength{\unitlength}{.002\textwidth}
\newsavebox{\Rectfwa}%
\savebox{\Rectfwa}(0,0){%
\linethickness{3pt}
{\color{black}\polygon(0,0)(120,0)(120,80)(0,80)(0,0)}%
}%
\newsavebox{\Rectfga}%
\savebox{\Rectfga}(0,0){%
{\color{lightgray}\polygon*(0,0)(120,0)(120,80)(0,80)(0,0)}%
}%

\newsavebox{\Rectrefa}%
\savebox{\Rectrefa}(0,0){%
{\color{white}\polygon*(0,0)(120,0)(120,80)(0,80)(0,0)}%
{\color{lightgray}\polygon*(45,30)(75,30)(75,50)(45,50)(45,30)}%
}%


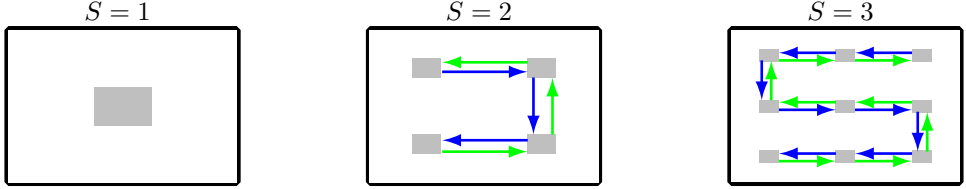
\begin{figure}[h!]
\begin{center}
\begin{picture}(500,100)

\put(0,0){\usebox{\Rectfwa}}%
\put(0,0){\usebox{\Rectrefa}}
 \put(190,0){\usebox{\Rectfwa}}
 \put(190,0){\scalebox{.5}{\usebox{\Rectrefa}}}
 \put(250,0){\scalebox{.5}{\usebox{\Rectrefa}}}
 \put(250,40){\scalebox{.5}{\usebox{\Rectrefa}}}
 \put(190,40){\scalebox{.5}{\usebox{\Rectrefa}}}
 \put(380,0){\usebox{\Rectfwa}}
 \put(380,0){\scalebox{.3333}{\usebox{\Rectrefa}}}
\put(420,0){\scalebox{.3333}{\usebox{\Rectrefa}}}
\put(460,0){\scalebox{.3333}{\usebox{\Rectrefa}}}
\put(380,26.6666){\scalebox{.3333}{\usebox{\Rectrefa}}}
\put(420,26.6666){\scalebox{.3333}{\usebox{\Rectrefa}}}
\put(460,26.6666){\scalebox{.3333}{\usebox{\Rectrefa}}}
\put(380,53.3333){\scalebox{.3333}{\usebox{\Rectrefa}}}
\put(420,53.3333){\scalebox{.3333}{\usebox{\Rectrefa}}}
\put(460,53.3333){\scalebox{.3333}{\usebox{\Rectrefa}}}
\put(40,85){$S=1$}
\put(230,85){$S=2$}
\put(420,85){$S=3$}

\linethickness{1pt}%
{\color{green}
\put(405,11){\vector(1,0){30}}
\put(445,11){\vector(1,0){30}}
\put(435,42){\vector(-1,0){30}}
\put(475,42){\vector(-1,0){30}}
\put(405,64){\vector(1,0){30}}
\put(445,64){\vector(1,0){30}}
\put(483,16){\vector(0,1){21}}
\put(401,43){\vector(0,1){21}}
}%

{\color{blue}
\put(435,15){\vector(-1,0){30}}
\put(475,15){\vector(-1,0){30}}
\put(405,38){\vector(1,0){30}}
\put(445,38){\vector(1,0){30}}
\put(435,68){\vector(-1,0){30}}
\put(475,68){\vector(-1,0){30}}
\put(478,37){\vector(0,-1){21}}
\put(396,64){\vector(0,-1){21}}
}%

{\color{green}
\put(228,16){\vector(1,0){45}}
\put(273,63){\vector(-1,0){45}}
\put(286,25){\vector(0,1){30}}
}%

{\color{blue}
\put(273,22){\vector(-1,0){45}}
\put(228,58){\vector(1,0){45}}
\put(276,55){\vector(0,-1){30}}
}%

\end{picture}
\end{center}

\vspace*{1em}
\caption{Supports of the static actuators. Case~$\Omega\subset\bbR^2$.} \label{fig.suppsensors}
\end{figure}

\begin{subequations}\label{roads-parab}
In this way we are considering roads, see~\eqref{v5-prop},
\begin{equation}
 \fkR_j(s)=\widehat\indf_{\omega(c_j(s))},\qquad\omega(c_j(s))\subset\Omega,\qquad c_j\in\clC^2([0,1],\bbR^d),
\end{equation}
and
\begin{equation}
 c_j(0)=c^j,\qquad c_j(1)=c^{j+1}, \qquad \dot c_j(0)=\dot c_j(1)=\ddot c_j(0)=\ddot c_j(1)=0.
\end{equation}
Note that for such roads, we may take
\begin{equation}
 c_j(s)=c_j+\phi(s)(c_{j+1}-c_j),\qquad1\le j\le M,
\end{equation}
where~$\phi\in \clC^2([0,1],[0,1])$ is increasing and satisfies the relations
$\phi(0)=0$, $\phi(1)=1$,
and~$\dot \phi(0)=\dot \phi(1)=\ddot \phi(0)=\ddot \phi(1)=0$.
\end{subequations}
Furthermore, we have
\begin{equation}\label{choice-xiphi-rect}
 \norm{c}{\clC^m(\bbR_0,\bbR^d}\le (\tfrac{1}{2\widehat\xi})^m\norm{\phi}{\clC^m([0,1],\bbR)},
 \qquad m\in\{0,1,2\}.
\end{equation}
Recall also that~$\widehat\xi$ can be chosen independent of~$y_0$.

\subsection{Conclusion of proof of Theorem~\ref{T:main-Intro}.}\label{sS:proofT:main-Intro}
Let us fix~$y_0\in H$ and~$c_0\in\bbR^M$ with~$\omega(c_0)\subset\Omega$, and let $c^1$ be the center of the static actuator~$\indf_{\omega_1}$. For
~$\widehat c(t)\coloneqq c_0+t^2(2-t)^2(c^1-c_0)$ we have that
\[
 \widehat c(0)=c_0,\quad \widehat c(1)=c^1,\qquad \dot {\widehat c}(0)=\dot {\widehat c}(1)=0,\qquad \ddot {\widehat c}\in L^\infty((0,1),\bbR^M).
\]
We proceed as in Corollary~\ref{C:main1} by taking the actuator path~$\Phi^*(t)=\widehat\indf_{\widehat c(t)}$, for time~$t\in[0,1]$, and the
path illustrated in section~\ref{sS:illus-react} for time~$t\ge1$ where we use Theorem~\ref{T:main0}.
Note that~$\Phi^*(1)=\widehat\indf_{\omega(\widehat c(1))}=\widehat\indf_{\omega_1}$ and~$\dot\Phi^*(1)=0$.

Observe also that
$\norm{\dot{\widehat c}}{W^{1,\infty}((0,1),\bbR^M)}\le
\norm{\dot\varphi}{W^{1,\infty}(0,1)}\norm{c^1-c_0}{\bbR^M}\le C_3$ with
$\varphi(t)\coloneqq t^2(2-t)^2$,  where~$C_3$ can be taken independent of~$c_0$ because~$\Omega$ is bounded.
Therefore, we have that
$\norm{\dot c}{W^{1,\infty}(\bbR_0,\bbR^M)}\le \max\{C_3,\norm{\cdot c}{W^{1,\infty}(\bbR_1,\bbR^M)}\}\le C_4$, with~$C_4$
independent of~$(y_0,c_0)$, because~$\norm{\dot c}{W^{1,\infty}(\bbR_1,\bbR^M)}$ is independent of~$y(1)$ (cf.~\eqref{choice-xiphi-rect}),
hence independent of~$y(0)$.
\qed

\subsection{A remark on Assumption~\ref{A:A1} and weak solutions.}\label{sS:RemarkArc}
Instead of the reaction-convection operator~$A_{\rm rc}\in L^\infty(\bbR_0,\clL(V,H))$, we can
also take~$A_{\rm rc}\in L^\infty(\bbR_0,\clL(H,V'))$ which is the case for a convection term as~$\nabla\cdot(by)$
under homogeneous Dirichlet boundary conditions, with~$b\in L^\infty(\bbR_0,\bbR^d)$. For the latter case we can repeat the procedure and prove
the stabilizability result in the~ {$H$-norm}. That is, we must work with weak
solutions~$y\in \clC(\overline\bbR_0,H)$ instead of strong solutions~$y\in \clC(\overline\bbR_0,V)$.
In particular we would just need to replace~$V$ by~$H$ in Assumption~\eqref{A:MstaticAct} and in~\eqref{cont.YL2}.
Recall that, with~$\widetilde C_{\rm rc}
=\norm{A_{\rm rc}}{L^\infty(\bbR_0,\clL(H,V'))}$ and~$\widetilde D_Y=\ovlineC{T,\widetilde C_{\rm rc}}$,
we will have (cf.~\cite[Lem.~2.2]{PhanRod18-mcss},
recalling that~$\clC(\overline{I_k},H)\xhookrightarrow{}W(I_k,V,V')$)
\begin{equation}\notag
\norm{\fkY_k(\fkv,f)(t)}{H}^2\le \widetilde D_Y\left(\norm{\fkv}{H}^2+\norm{f}{L^2(I_k,V')}^2\right),\quad t\in I_k=(kT,kT+T).
\end{equation}

Concerning parabolic equations we can see that instead of~\eqref{control2.0} we would obtain
\begin{align}
\norm{\bfv(t)}{V'}&\le\dnorm{P}{}
\left(
\norm{P_{E_M}}{\clL(V',H)}\norm{A_{\rm rc}y}{V'}+\lambda\norm{y}{H}\right)\notag\\
 &\le
 \left(\norm{P_{E_M}}{\clL(V',H)}\norm{A_{\rm rc}}{L^\infty(\bbR_0,\clL(H,V'))}+\lambda\right)\dnorm{P}{} \norm{y(t)}{H}\notag\\
 &\le
 \left(\norm{P_{E_M}}{\clL(V',H)}C_{\rm rc}+\lambda\right)\dnorm{P}{} C\ex^{-\mu (t-kT)}\norm{y(kT)}{H},\qquad t\ge kT.
 \label{control2.0a}
\end{align}
which implies that
\begin{align}
\norm{\bfv(t)}{V'}&\le
 \fkK_0\norm{y(kT)}{H},\quad t\in \overline{I_k}=[kT,kT+T].\label{control2.1a}
\end{align}
Such inequality implies that the control is essentially bounded as required in Assumption~\eqref{A:MstaticAct}.

Such weak solutions are also defined for~$A_{\rm rc}\in L^\infty(\bbR_0,\clL(V,H))$, but we cannot show that
the control remains essentially bounded in the case we only know that~$\norm{y(t)}{H}$ remains bounded. For that
we would need to bound~\eqref{control2.0} by~$\norm{y(kT)}{H}$ instead of~$\norm{y(kT)}{V}$, but this seems to be not possible
in general.

\section{Numerical simulations}\label{S:num_impl}
 According to the construction in Sections ~\ref{S:stability}  and ~\ref{S:proofT:main-Intro},
 once we have fixed a set of roads~$\fkR_j$, see~\eqref{roads-parab}, we could compute the
moving control~$\clV^5$ in~\eqref{v5} from the control~$\clV^0$ given
by~\eqref{sys-y-parab-obli} simply by setting ~$N=\widehat N$ large enough,  ~$\e=\widehat\e$  small enough
and~$\xi=\widehat\xi$, and by computing
the scalars~$l_{k,n,j}$ in~\eqref{pcc-VNxi}, from which we could also
compute~$\Sigma_n(y(kT))$, the switching times in~\eqref{vartheta}.
However, we would obtain an actuator~$\clV^5$ which would be moving very fast by visiting all initial
static actuators twice in a each interval of time~$i\frac{T}{N}$, $i\in\bbN$. In applications,
this is likely not the ``best''
motion for the actuator, sometimes it would be better to stay longer in a particular region or
it would be better to leave
the roads~$\fkR_j$ in order to cover other regions of~$\Omega$.
Therefore, we are going to compute the center~$c=c(t)$ of the moving actuator and the
control magnitude~$u=u(t)$ using tools from optimal control.

\subsection{Computation of a stabilizing single actuator based receding horizon control}
We deal with system~\eqref{sys-y-parab-Intro-Ext},
 where now we will   consider~$(y,c)$ as the state of the system and~$(u,\eta)$ as the control.
 Note that~$\eta$ can be seen as a control on
 the acceleration of~$c$, which also makes sense from the applications point of view,
 where we cannot  change instantaneously the velocity
 of a device, but instead we can apply a force/acceleration to it.
 Then, to  compute the  the force $\eta$ and magnitude   $u$,
 we formulate the following infinite-horizon optimal control problem defined by
 minimizing the performance index function defined by
 \begin{align}\label{cost-funct}
 & J_{\infty}(u,\eta :(y_0,c_0,0)) \coloneqq \frac{1}{2}\int^{\infty}_{0} |\nabla y(t,\Bigcdot)|^2_{L^2(\Omega,\bbR^d)}+ \beta |u(t)|^2\,\rmd t.
\end{align}
 That is, we define the infinite-horizon optimization problem
 \begin{subequations}\label{IHOP}
 \begin{align}
 \label{infinite-OP}
&\inf_{(u,\eta)\in L^2(\bbR_0,\mathbb{R})\times L^2_{\rm loc}(\bbR_0,\mathbb{R}^{d}) } J_{\infty}(u,\eta :(y_0,c_0,0))
\intertext{subject to}
 \label{sys-y-parab-Num}
 & \begin{cases}
  \dot y-\nu\Delta y+ay+b\cdot\nabla y=u\indf_{\omega(c)},\quad y\rest\Gamma=0,\\
  \ddot c+\varsigma\dot c+\epsilon c=\eta,\\
  y(0,\Bigcdot)=y_0, \quad c(0)=c_0,\quad \dot c(0)=0,
  \end{cases}
\intertext{as well as to the constraints}
 \label{state-constraints}
 &\begin{cases}
  c  \in  \mathcal{C}\coloneqq\{ g \in C(\overline{\mathbb{R}}_0,\mathbb{R}^d) \mid  \omega(g(s))\subset\Omega,   \text{ for }  s \in \overline\bbR_0 \} , \\
 \eta \in \mathcal{X}  \coloneqq \{ \kappa \in  L^2_{\rm loc}(\bbR_0,\mathbb{R}^d) \mid
 \dnorm{\kappa(s)}{} \leq K \mbox{ for a.e. }s\in\bbR_0\} ,
 \end{cases}
\end{align}
\end{subequations}
where  $K=(K_1,K_2,\dots,K_d)\in\bbR^d$ is a vector with coordinates~$K_i>0$, for all $1\le i\le d$, and where by~$\dnorm{\kappa(s)}{} \leq K$ we mean that
$
|\kappa_i(s)| \leq K_i\mbox{ for all } 1\le i\le d.
$
For tackling this infinite-horizon problem we employ a receding horizon framework.
This framework relies on successively solving finite-horizon open-loop
problems on bounded time-intervals as follows. Let us fix~$T>0$ and let an initial vector of the
form $\mathcal{I}_{0} \coloneqq (t_0,y_0,c_0,c^1_0)\in {\overline\bbR_0}\times L^2(\Omega)\times \mathbb{R}^d\times
\mathbb{R}^{d}$ be given. We define the
time
interval~$I_{t_0}\coloneqq (t_0,t_0+T)$, and the finite-horizon cost functional
\[
 J_{T}(u,\eta :\mathcal{I}_{0})\coloneqq
\frac{1}{2}\int^{t_0+T}_{t_0}  |\nabla y(t,\Bigcdot)|^2_{L^2(\Omega,\bbR^d)} + \beta |u(t)|^2\,\rmd t,
\]
and introduce the finite-horizon optimization problem
\begin{subequations}\label{FHOP}
 \begin{align}
\label{finite-OP}
&\min_{(u,\eta)\in L^2(I_{t_0},\mathbb{R}^{1+d}) } J_{T}(u,\eta :\mathcal{I}_{0})
\intertext{subjected to the dynamical constraints}
 \label{finte-cont-sys}
 &\begin{cases}
\dot y-\nu\Delta y+ay+b\cdot\nabla y =u \indf_{\omega(c)},\quad y\rest\Gamma=0,\\
\ddot c+\varsigma\dot c+\epsilon c =\eta, \\
y(t_0,\Bigcdot)=y_0,\quad c(t_0)=c_0,  \quad  \dot c(t_0) = c^1_0, &
\end{cases}\\
\intertext{in the time interval~$I_{t_0}$,\black as well as to the constraints}
\label{finte-cont-constraint}
&\begin{cases}
c \in\mathcal{C}_{t_0,T} := \{ g \in  C(\overline I_{t_0}, \mathbb{R}^d) \mid \omega(g(s))\subset\Omega\,\text{ for } s\in
\overline I_{t_0}\}\\
\eta \in \mathcal{X}_{t_0,T}\coloneqq \{ \kappa \in  L^2(I_{t_0}, \mathbb{R}^d) \mid
\dnorm{\kappa(s)}{}\black\leq K \mbox{ for a.e. }s\in I_{t_0}\}.
\end{cases}
\end{align}
\end{subequations}

The steps of the RHC are described in Algorithm~\ref{RHA}, where
we use the subset
\[
 \bbR^d_{[\omega]}\coloneqq\{c\in\bbR^d\mid \omega(c)\in\Omega\}.
\]

\begin{algorithm}[htbp]
\caption{Receding Horizon Algorithm}\label{RHA}
\begin{algorithmic}[1]
\REQUIRE{The prediction horizon $T>0$, the sampling time $\delta<T$,
and an initial vector  $\mathcal{I}_{\infty}=(y_0,c_0) \in  H \times  \mathbb{R}^{d}_{[\omega]}$.}
\ENSURE{The suboptimal RHC pair~$(u_{rh},\eta_{rh})$.}
\STATE Set~$t_0=0$  and~$\mathcal{I}_{0}=(t_0,y_0,c_0,0)$;
\STATE Find the solution~$(y_T^*(\Bigcdot;\mathcal{I}_{0}) ,u^*_T(\Bigcdot;\mathcal{I}_{0}),
c^*_T(\Bigcdot;\mathcal{I}_{0}),{\eta}^*_T(\Bigcdot;\mathcal{I}_{0}))$
over the time horizon~$I_{t_0}$ by solving the  open-loop problem~\eqref{FHOP};
\STATE For all $\tau \in [t_0,t_0+\delta)$, set $u_{rh}(\tau)=u^*_T(\tau;\mathcal{I}_{0})$ and $\eta_{rh}(\tau)=\eta^*_T(\tau;\mathcal{I}_{0})$;
\STATE Update: $t_0 \leftarrow t_0 +\delta$;
\STATE Update: $\mathcal{I}_{0}\leftarrow (t_0,y^*_T(t_0;\mathcal{I}_{0}),c^*_T(t_0;\mathcal{I}_{0}),\dot c^*_T(t_0;\mathcal{I}_{0}))$;
\end{algorithmic}
\end{algorithm}

\subsection{Numerical discretization and implementation}
Here we report on numerical experiments related
to Algorithm~\ref{RHA}. These experiments  confirm the capability of the moving control
computed by Algorithm~\ref{RHA}.  In all examples, we deal  with  one-dimensional
controlled systems of the form \eqref{sys-y-parab-Intro} defined on $\Omega :=(0,1)$
which are exponentially unstable without control. Moreover, we compare the performance
of  one single moving control with  finitely many static actuators.  Throughout, the spatial discretization was done by the
standard Galerkin method using  piecewise linear and continuous basis functions
with mesh-size $h = 0.0025$. Moreover, for temporal discretization  we used  the
Crank--Nicolson/Adams--Bashforth scheme \cite{He03} with step-size $t_{\rm step} =  0.001$.
In this scheme,   the  implicit Crank--Nicolson scheme is used except for the nonlinear term $u\indf_{\omega(c)}$ and convection term
$b \cdot \nabla y$ which are treated with the explicit Adams--Bashforth scheme.
To deal with open-loop problems ~\ref{finite-OP},
we considered the reduced formulation of the problem with respect
to the independent variables $(\eta,u)$. The state constraints $\mathcal{C}_{t_0,T}$
were treated using the  Moreau--Yosida \cite{HinterKunisch06} regularization
with parameter  $\mu  = 10^{-5}$. Moreover, the box constraints $|u(t)|\leq K$
were handled using projection. We used the projected Barzilai--Borwein gradient
method  \cite{AzmiKunisch20,DaiFlet05,BB} equipped with a nonmonotone  line search  strategy.
Further, we terminated the algorithm as the $L^2$-norm of the projected gradient for the
reduced problem was smaller than $10^{-4}$ times of the norm of the projected gradient for initial iterate.

 For the case with static actuators, we choose the indicator functions $\indf_{\omega_i}$ with the placements
\begin{equation}
\omega_i := \left( \tfrac{1}{2M}(2i-1)- \tfrac{r}{2}, \tfrac{1}{2M}(2i-1)+ \tfrac{r}{2} \right) \quad \text{ for } i=1,\dots,M,
\end{equation}
where $r>0$ and integer $M \in \mathbb{N}$.
 This is motivated by the stabilizability results given in \cite[Thm. 4.4]{RodSturm20} and    \cite[Sect.  4.8.1]{KunRod18-cocv}.
 Further, for every $t \geq 0$, the moving actuator $\indf_{\omega(c(t))}$ is described by
 \begin{equation}\notag
 \omega(c(t)):=\left( c(t)- \tfrac{r}{2}, c(t)+ \tfrac{r}{2} \right).
 \end{equation}

 For all actuators, namely, moving and fixed ones, we chose $r = 0.04$. Thus the support of every actuator covers
 only four percent of the whole of domain.  In the case of the  static actuators, we employed the receding horizon
 framework given in  \cite[Alg. 1]{AzmiKunisch19} for the choice of  $|\cdot|_{*}=|\cdot|_{\ell_2}$  with control
 cost parameter  $\beta$.  In all numerical experiments, we chose $T = 1.25$ and $\delta =0.5$.
\begin{example}
\label{exp1}
In this example, we set (cf.~\eqref{sys-y-parab-Num}--\eqref{state-constraints})
\begin{align}
 \nu &=0.1,\quad &\varsigma &= 1,\quad& \epsilon &= 0,\notag\\
 a(t,x)& =-3-2|\sin(t+x)|,\quad& b(t,x) &=|\cos(t+x)|,\quad& K &=500.\notag
\end{align}
Further, we chose the initial conditions
\[
 y_0(x):= \sin(\pi x),\quad (c_0,c^1_0) :=(0.5,0).
\]
Figures~\ref{Fig3} and~\ref{Fig4} correspond to the choices  $\beta = 0.1$ and $\beta = 0.5$, respectively.
Figures~\ref{Fig3a} and~\ref{Fig4a} illustrate  the evolution of the $L^2(\Omega)$-norm for the states
corresponding to  uncontrolled system,  one single moving actuator, and fixed actuators ($M=1,\dots,5$).
The black dotted line in both figures   corresponds to the uncontrolled state. It shows  that the uncontrolled
state is exponentially unstable.  For both  cases $\beta = 0.1$ and $\beta = 0.5$, we can see  that
the moving control obtained by Algorithm~\ref{RHA} is stabilizing and its stabilization rate is smaller
than the one corresponding to one single
static actuator ($M =1$),  and  comparable to the cases $M =2,3,4$. Further, by comparing Figures~\ref{Fig3a}
and~\ref{Fig4a}, we can infer that  $\beta =0.1 $ leads to a faster stabilization compared to the case $\beta =0.5$.
\begin{figure}[htbp]
    \centering
    \subfigure[$L^2(\Omega)$-norm for states]
    {
    \label{Fig3a}
       \includegraphics[height=.35\textwidth,width=.45\textwidth]{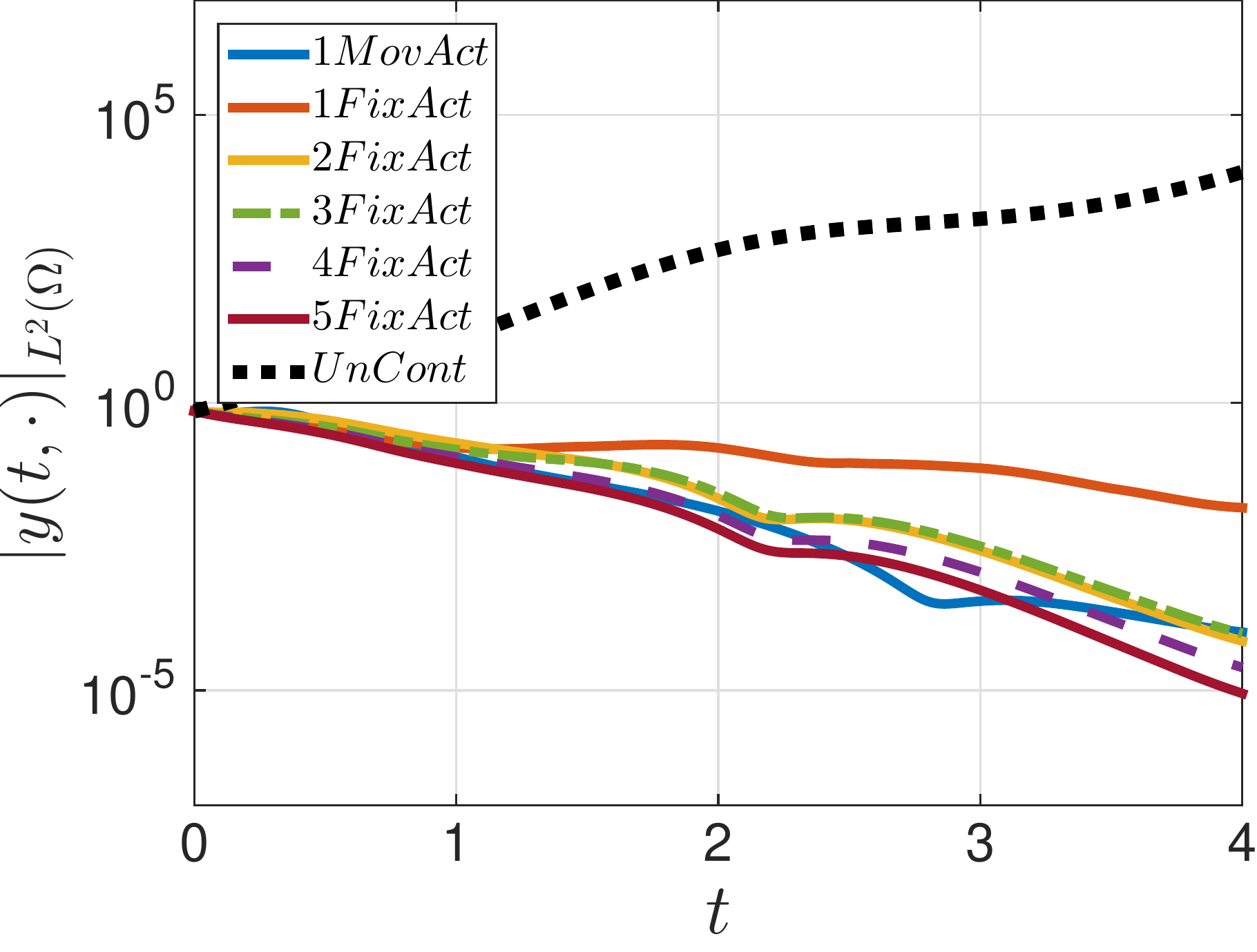}
    }
    \subfigure[ Control domain movements]
    {
    \label{Fig3b}
        \includegraphics[height=.35\textwidth,width=.45\textwidth]{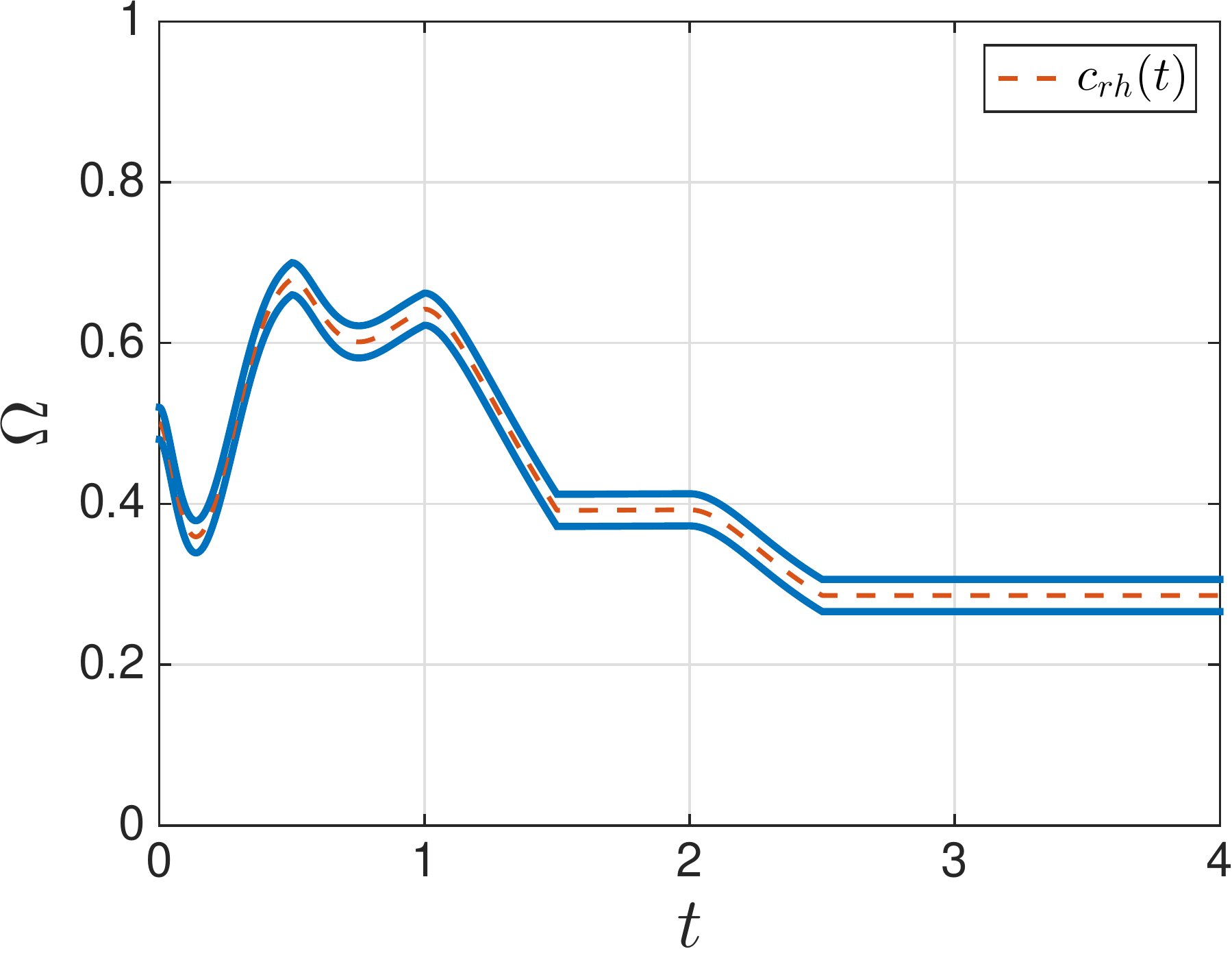}
    }
    \subfigure[ Absolute value of magnitude ]
    {
        \label{Fig3c}
        \includegraphics[height=.35\textwidth,width=.45\textwidth]{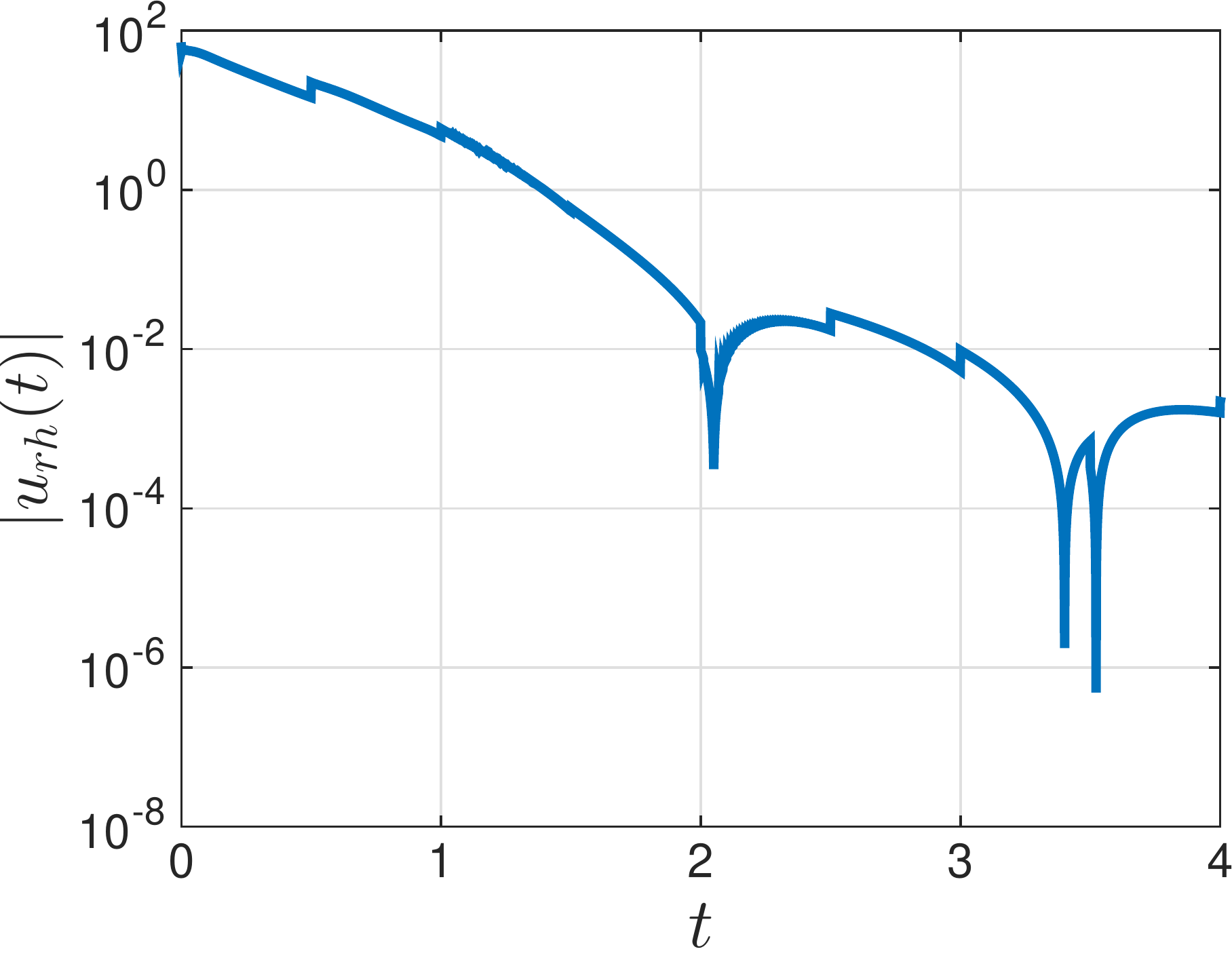}
    }
     \subfigure[Force]
    {
    \label{Fig3d}
        \includegraphics[height=.35\textwidth,width=.45\textwidth]{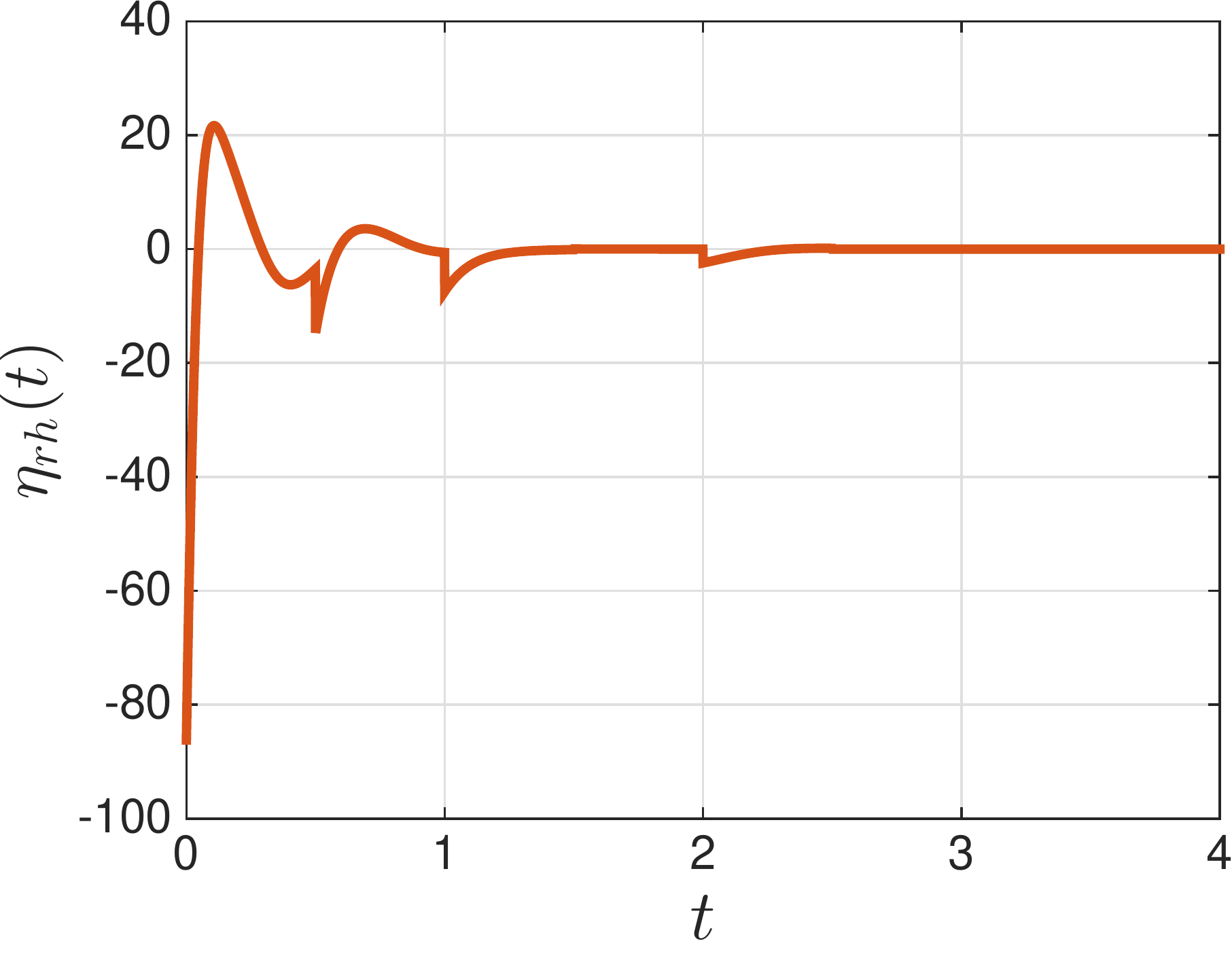}
    }
     \caption{Example~\ref{exp1}: Numerical results for $\beta = 0.1$.}
    \label{Fig3}
\end{figure}
 As can be seen from Figure~\ref{Fig4a}, it is not clear for  the case $\beta = 0.5$ that one fixed actuator is asymptotically stabilizing.
 Moreover, for $M=4,5$ we have better stabilization results compared to the single moving control.  Figures~\ref{Fig3b} and~\ref{Fig4b}
 illustrate the time  evolution of the control domain $\omega(c_{rh}(t) )$.
From  Figures~\ref{Fig3b},  we can observe that,  at some point ($t=2.5$), the actuator stops moving.
This corresponds to Figure $\ref{Fig3d}$, which demonstrates the evolution of the force.  In this case, the
receding horizon framework moved the actuator until some degree of
stabilization $(|y_{rh}(t,\Bigcdot)|_{L^2(\Omega)}\leq 10^{-3}) $ was reached and,
then, decided to steer the system with only a fixed actuator.  In this case, the $|y_{rh}(t,\Bigcdot)|_{L^2(\Omega)}$-norm
corresponding to $M= 4$ and $M=5$ is smaller than the one corresponding to  the single actuator which is free to move,
once $t\geq 3$, see Figure~\ref{Fig3a}.
For the case $\beta = 0.5$, we have a different scenario. In this case, the control
remains moving  throughout the whole  simulation (see Figure~\ref{Fig4b}). This fact
can also be seen from Figure ~\ref{Fig4d}  which shows that, here a stronger force was needed compared
to the case $\beta = 0.1$. Figures~\ref{Fig3c} and ~\ref{Fig4c} show the evolution of the absolute value of the
magnitude control $u_{rh}$.
\begin{figure}[htbp]
    \centering
    \subfigure[$L^2(\Omega)$-norm for states]
    {
    \label{Fig4a}
       \includegraphics[height=.35\textwidth,width=.45\textwidth]{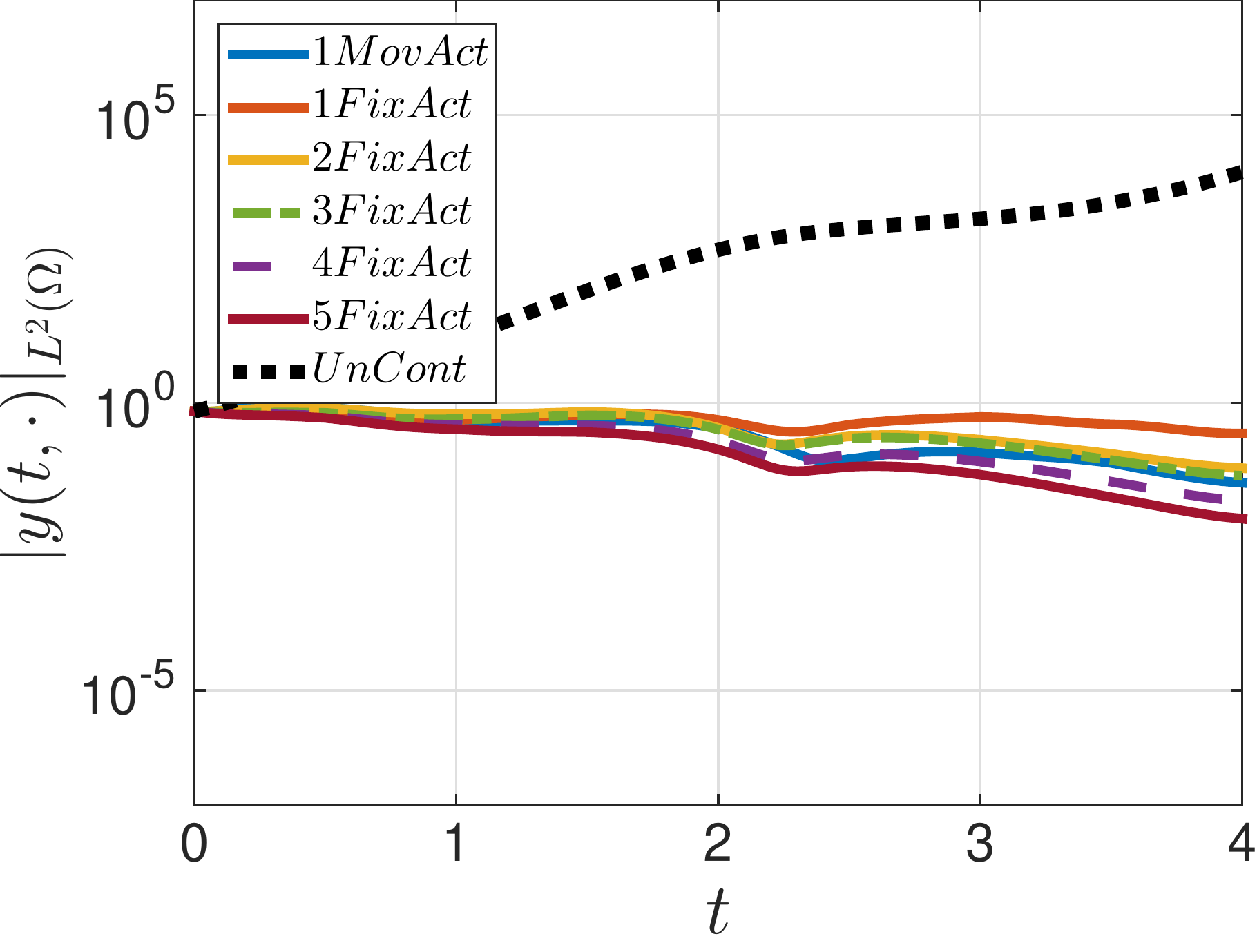}
    }
    \subfigure[Control domain movements]
    {
        \label{Fig4b}
        \includegraphics[height=.35\textwidth,width=.45\textwidth]{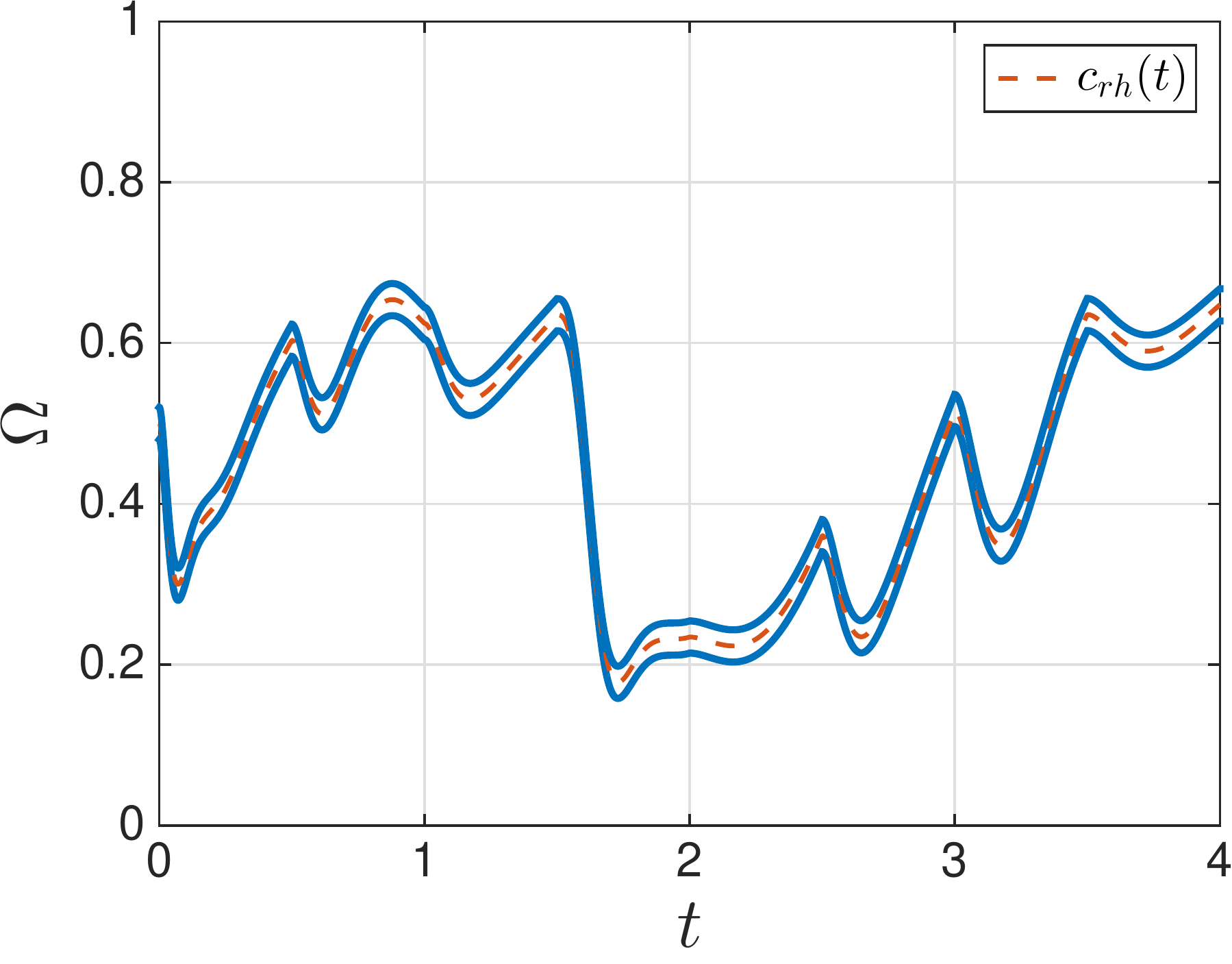}
    }
    \subfigure[Absolute value of magnitude ]
    {
       \label{Fig4c}
        \includegraphics[height=.35\textwidth,width=.45\textwidth]{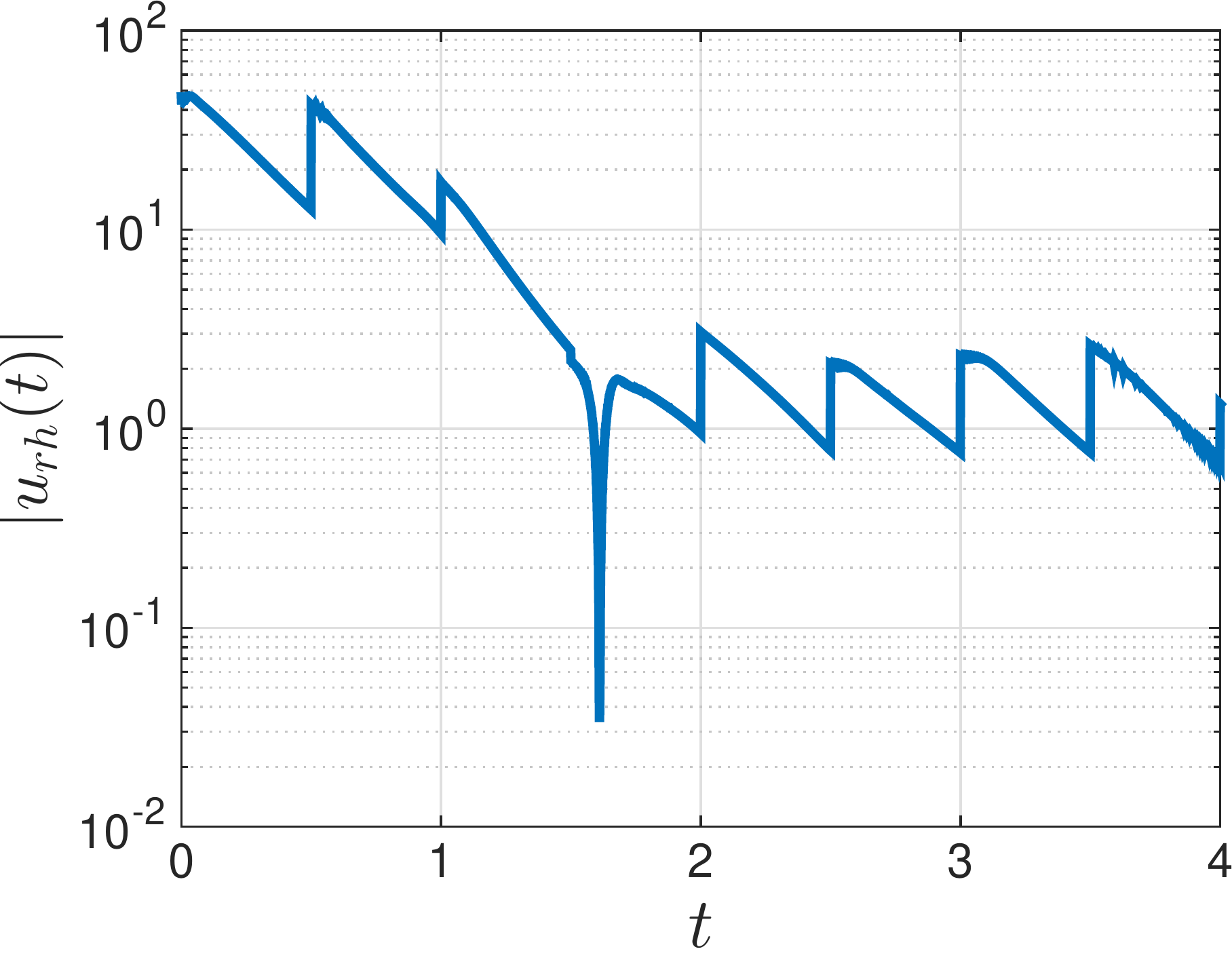}
    }
     \subfigure[ Force]
    {
     \label{Fig4d}
\includegraphics[height=.35\textwidth,width=.45\textwidth]{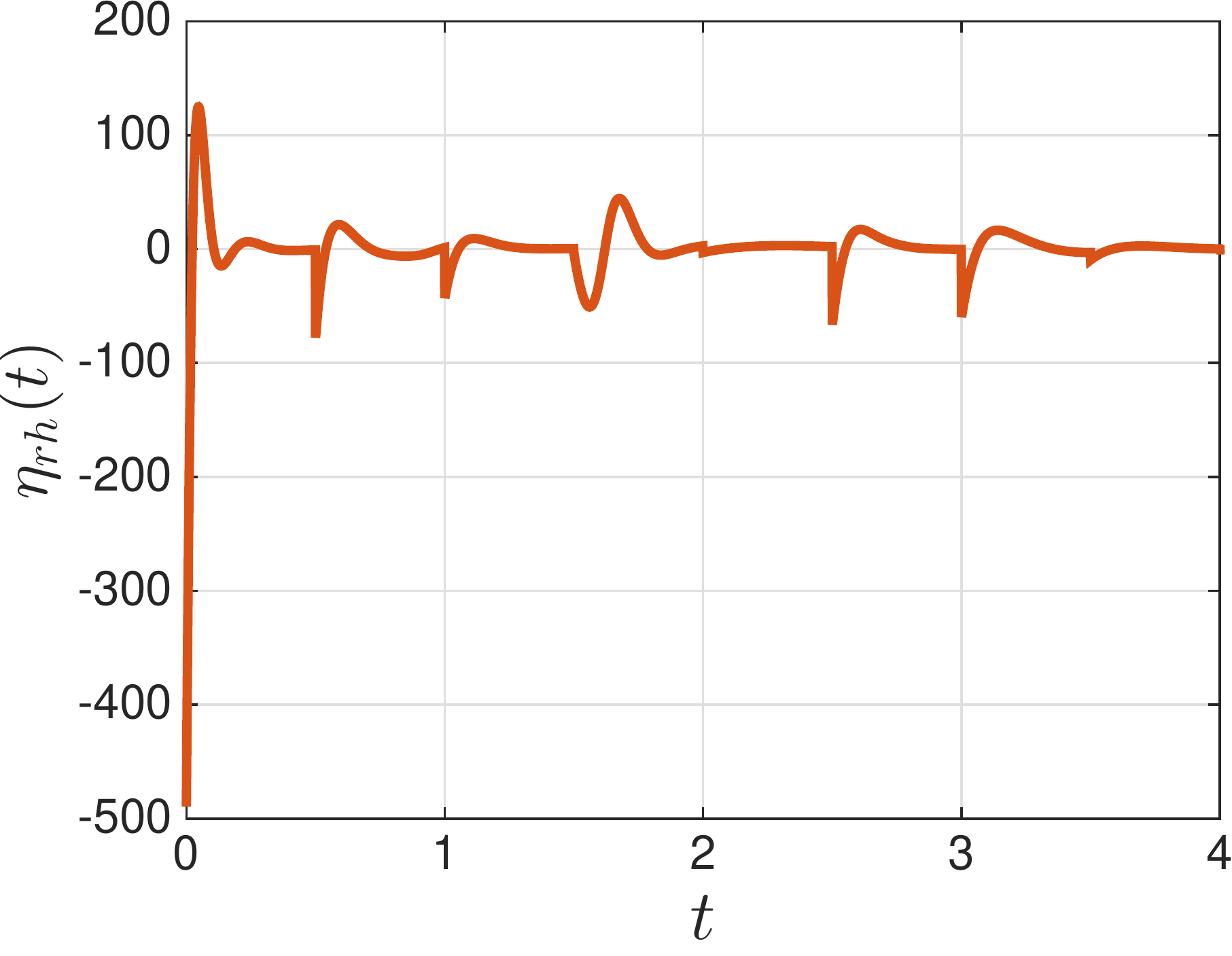}
    }
     \caption{Example~\ref{exp1}: Numerical results for $\beta = 0.5$ }
    \label{Fig4}
\end{figure}

\end{example}

\begin{example}
\label{exp2}
Note that Proposition~\ref{P:NOTstabil1Act} implies says that if $y_0$ is an eigenfunction associated to a negative eigenvalue of~$\clA=-\nu\Delta-5\Id$ and orthogonal to the fixed actuator, then the solution cannot be stabilized to zero. 
We present an example illustrating such a situation.
A moving control
steers, however, the system to zero. Here we used the same setting  as in the previous  example, except
that we put
\begin{subequations}\label{onestatic-fail}
\begin{align}
 a(t,x) &=-5,\quad &b(t,x) &=0,\quad& y_0(x) &= \sin( 2\pi x).
\end{align}

Finally, we take a fixed actuator~$\indf_\omega$ centered at~$0.5$,
\begin{equation}
\omega=(-\tfrac{r}2,\tfrac{r}2).
\end{equation}
\end{subequations}
Clearly we have that~$(y_0,\indf_\omega)_{L^2(\Omega)}=0$, and that
$ y_0$ is an eigenfunction of~$-\nu\Delta-5\Id$ with eigenvalue~$4\pi^2\nu-5<0$. Thus we cannot stabilize the corresponding solution to zero.
This is confirmed
in Figure~\ref{Fig5a}, where we can see that the single fixed actuator
is not able to stabilize the system.

Furthermore, 
we can see that the curves corresponding to the uncontrolled state and one single fixed actuator
are overlapping each other completely.
This is not surprising, because  $u^*=0$ is necessarily  the unique minimum for all finite horizon open-loop problems with $y_0\in\bbR\sin( 2\pi x)$. This is due to the fact that~$(y_0,\indf_\omega)_{L^2(\Omega)}=0$.

On the other hand, we can see from Figures~\ref{Fig5b} and~\ref{Fig5a}
that a single moving control is able to steer the system exponentially to zero by moving the actuator.
Figures~\ref{Fig5c} and~\ref{Fig5d} depict the evolution of  the absolute value of the magnitude
$u_{rh}$ and the evolution of the force $\eta_{rh}$, respectively.
\begin{figure}[htbp]
    \centering
    \subfigure[ $L^2(\Omega)$-norm for states]
    {
    \label{Fig5a}
       \includegraphics[height=.35\textwidth,width=.45\textwidth]{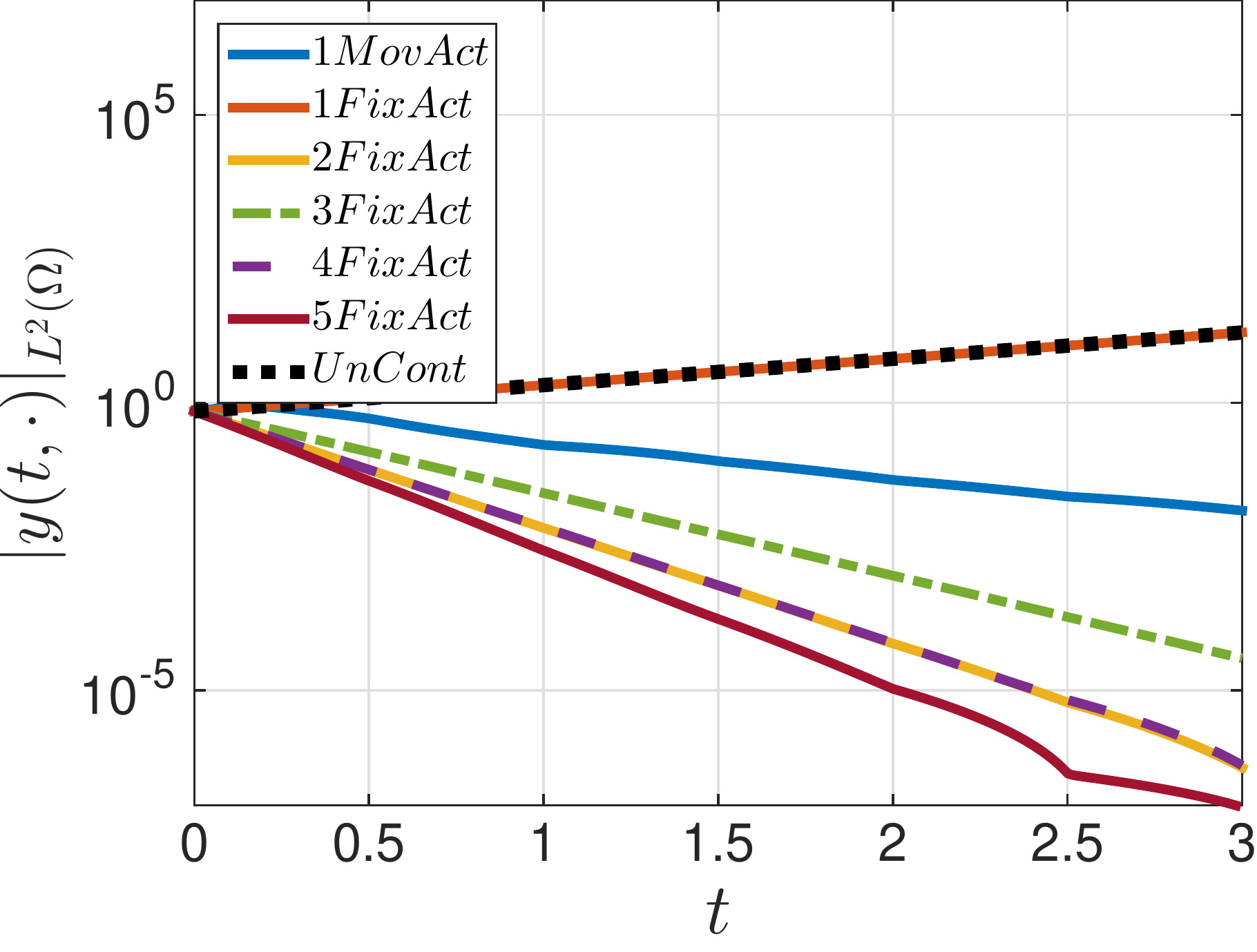}
    }
    \subfigure[Control domain movements]
    {
     \label{Fig5b}
        \includegraphics[height=.35\textwidth,width=.45\textwidth]{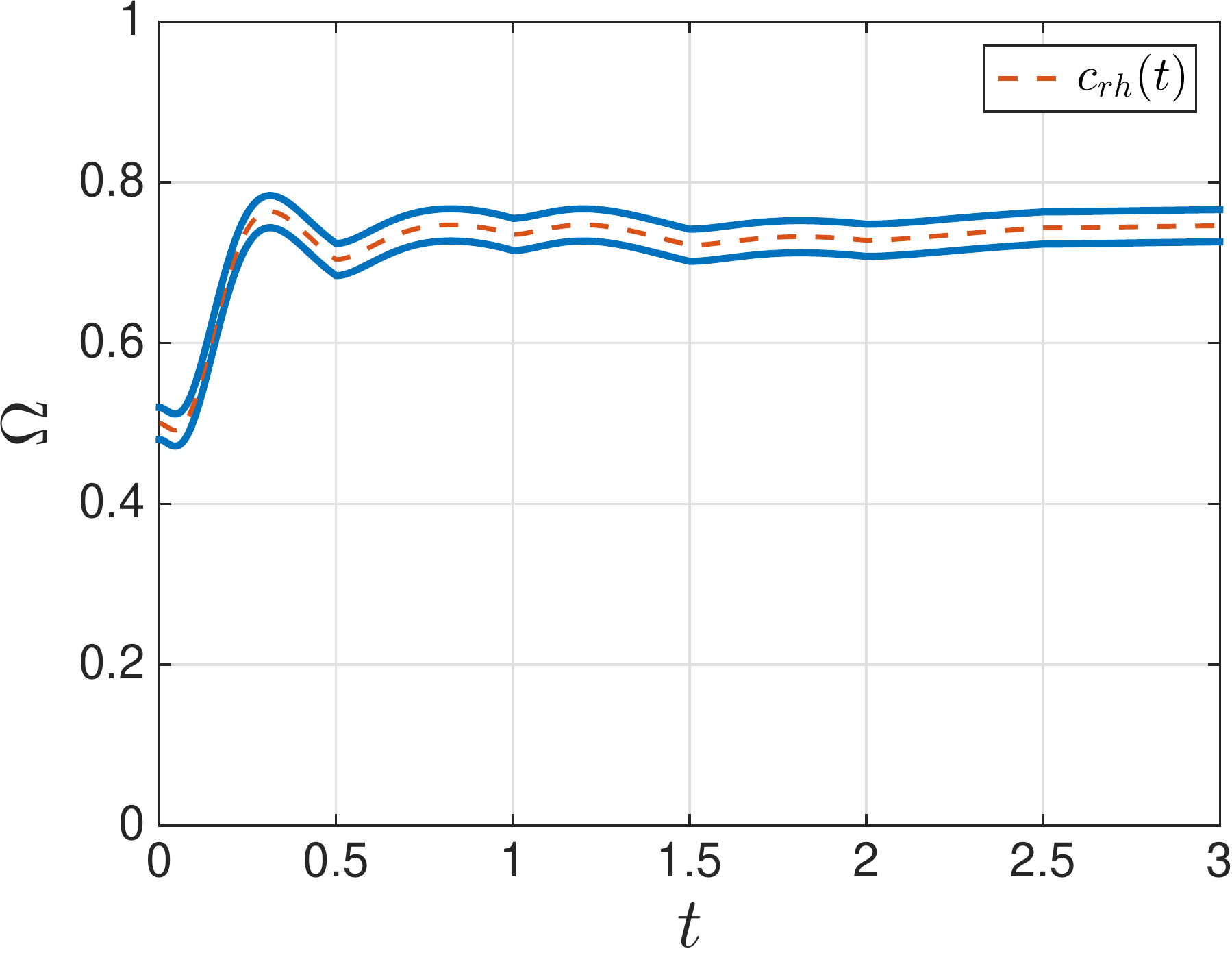}
    }
    \subfigure[Absolute value of magnitude ]
    {
     \label{Fig5c}
        \includegraphics[height=.35\textwidth,width=.45\textwidth]{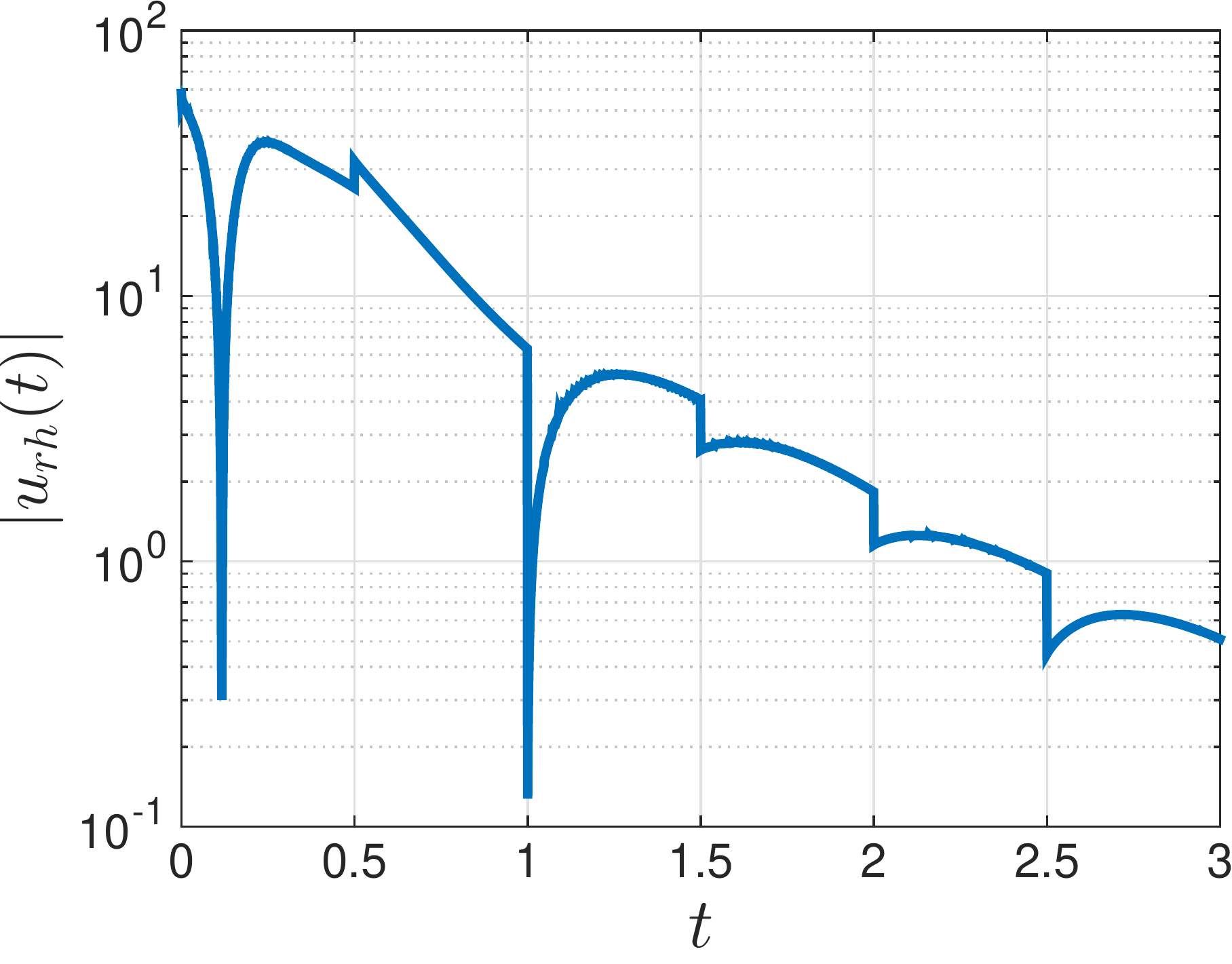}
    }
     \subfigure[Force]
    {
     \label{Fig5d}
        \includegraphics[height=.35\textwidth,width=.45\textwidth]{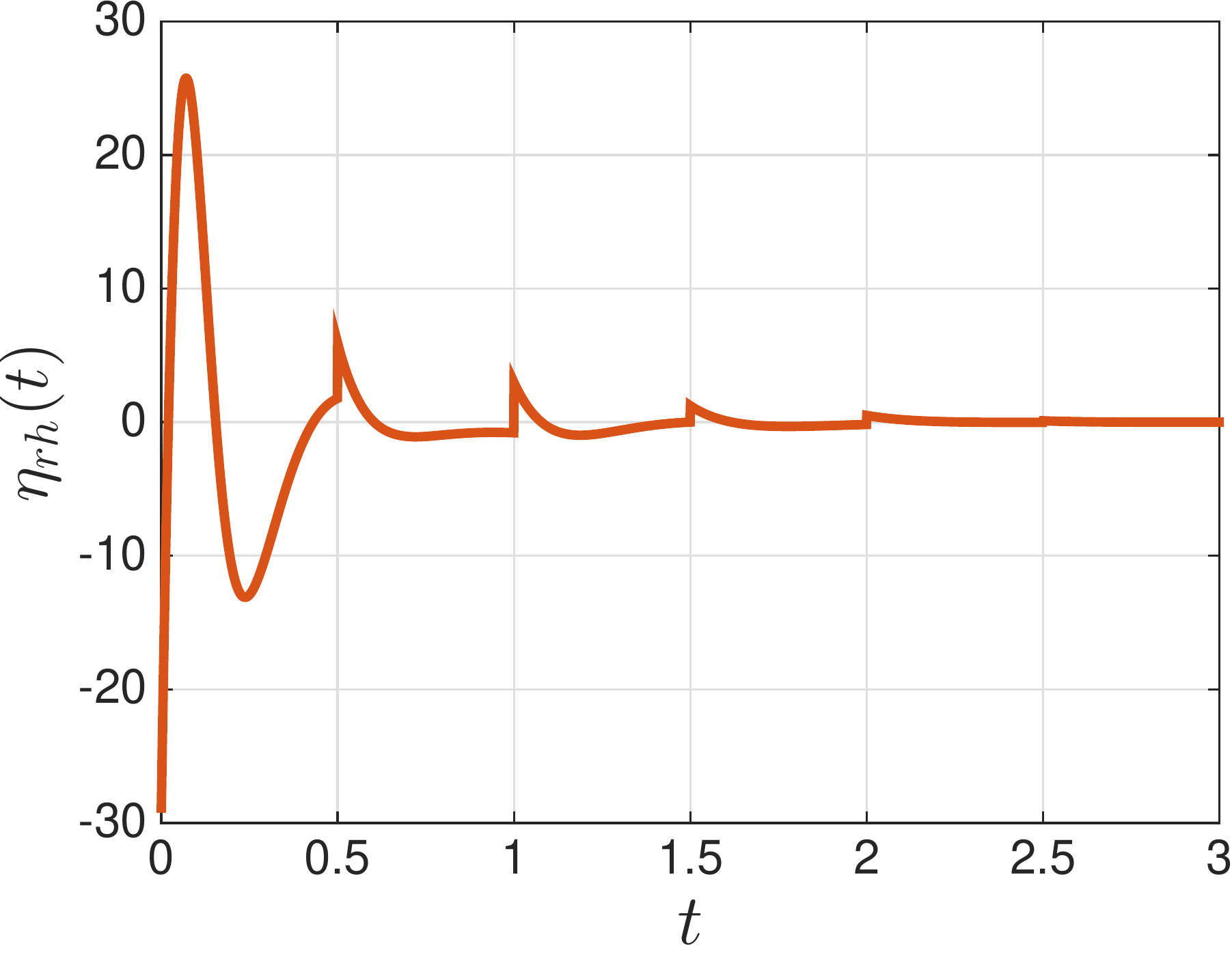}
    }
     \caption{Example~\ref{exp2}: Numerical results for $\beta = 0.01$ }
    \label{Fig5}
\end{figure}

\begin{remark}
Note that, though the setting~\eqref{onestatic-fail} serves to illustrate the orthogonality setting $(y_0,\indf_\omega)_{L^2(\Omega)}$ in Proposition~\ref{P:NOTstabil1Act},  we can show that, in our 1D setting there are choices of~$\omega$ which are able to stabilize the system. Indeed, all the eigenvalues are simple and for an actuator~$\indf_\omega$ with~$\omega=(a,b)\subseteq(0,1)$ 
we find that
\[
(\sin( j\pi x),\indf_\omega)_{L^2(\Omega)}=\tfrac{1}{j\pi}(\cos( j\pi a)-\cos( j\pi b)),\qquad j\in\bbN_0.
\]
Hence, since
\[
(\sin( j\pi x),\indf_\omega)_{L^2(\Omega)}=0\Longleftrightarrow j\pi a\pm j\pi b \in\pm2\pi\bbN\Longleftrightarrow j( a\pm  b) \in\pm2\bbN,
\]
we have that
\[
(\sin( j\pi x),\indf_\omega)_{L^2(\Omega)}\ne0,\quad\mbox{for all}\quad j\in\bbN_0,\quad\mbox{if}\quad \{a+ b,a-b\}\subset\bbR\setminus\bbQ,
\]
where~$\bbQ$ stands for the set of rational numbers. Therefore, by Proposition~\ref{P:stabil1Act},  we can stabilize the system if we replace~$\omega$ in~\eqref{onestatic-fail} by, for example,~$\widetilde\omega=(-\frac{r}2-\varrho_1,\frac{r}2+\varrho_2)\subset(0,1)$ with~$\{\varrho_2-\varrho_1,r+\varrho_2+\varrho_1\}\subset\bbR\setminus\bbQ$.
In particular taking small~$\varrho_1\ge0$ and small~$\varrho_2\ge0$ we see that with suitable arbitrary small perturbations~$\widetilde\omega$ of~$\omega$, the actuator~$\indf_{\widetilde\omega}$ allows us stabilize the system.

Finally, notice that in 2D (which we do not consider here), for the unit square as spatial domain~$\Omega$, the eigenvalues are not all simple and it will be always possible to find examples where a fixed actuator is not enough for stabilization (again, due to Proposition~\ref{P:NOTstabil1Act}).
\end{remark}
\end{example}

\subsection{Remarks on the computation of the moving actuator}
Summarizing,  we can assert that the single moving actuator obtained by Algorithm~\ref{RHA} is
able to stabilize the system to zero, confirming our theoretical findings.

For simplicity, and since the paper is already relatively long, we have restricted the numerical results to the 1D case.

The discretization of the optimal control problems in the 2D case is also more involved (due essentially to the adjoint equations). The 2D case will be addressed in a future work, where we plan to include also more details concerning the numerical realization. 
In particular, we plan to investigate the asymptotic stability of RHC and well-posedness of the associated finite-horizon open-loop subproblems.

It could also be of interest to investigate is the dependence on the stabilizability in the forcing bound parameter~$K$. Here in our simulations we have chosen~$K=500$ rather large, and the bound is never reached. It would be interesting to observe what happen when the bound is reached/active.

\appendix
\gdef\thesection{\Alph{section}}
\section*{Appendix}\normalsize
\setcounter{section}{1}
\setcounter{theorem}{0} \setcounter{equation}{0}
\numberwithin{equation}{section}

\subsection{Proofs of Propositions~\ref{P:NOTstabil1Act} and~\ref{P:stabil1Act}}\label{Apx:proofP:NOTYESstabil1Act}
Recall system~\eqref{sys-y-axy},
\begin{subequations}\label{sys-y-axy.Apx}
 \begin{align}
 &\tfrac{\p}{\p t} y(t,x)-\nu\Delta y(t,x)+a(x)y(t,x)=u(t)\Psi,\\
 &y(0,\Bigcdot)=y_0,\qquad \clG y\rest\Gamma=0,
 \end{align}
 \end{subequations}
and the system of eigenfunctions~$\widetilde e_i$ and increasing sequence of
eigenvalues~$\widetilde \alpha_i$ of the operator~$\clA=-\nu\Delta +a(x)\Id$,
$\clA\widetilde e_i=\widetilde \alpha_i\widetilde e_i$.

We start with the following auxiliary result.
 \begin{lemma}\label{L:act-eigf}
If there exists a nonsimple eigenvalue~$\widetilde \alpha_j$, then there exists
one associated eigenfunction~$\overline e_j$
such that  $(\overline e_j,\Psi)_{L^2(\Omega}=0$.
\end{lemma}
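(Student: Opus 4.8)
The plan is to exploit the fact that a nonsimple eigenvalue has an eigenspace of dimension at least two, on which the linear functional $v\mapsto(v,\Psi)_{L^2(\Omega)}$ must have a nontrivial kernel.

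First I would recall that $\widetilde\alpha_j$ nonsimple means that the eigenspace $\clE_j\coloneqq\Ker(\clA-\widetilde\alpha_j\Id)\subset L^2(\Omega)$ satisfies $\dim\clE_j\ge 2$. Hence I can pick two linearly independent eigenfunctions $e,\widehat e\in\clE_j$ associated to $\widetilde\alpha_j$. Then I would split into two cases. If $(e,\Psi)_{L^2(\Omega)}=0$, simply take $\overline e_j\coloneqq e$ and we are done. Otherwise $(e,\Psi)_{L^2(\Omega)}\ne0$, and I set
\[
 \overline e_j\coloneqq \widehat e-\frac{(\widehat e,\Psi)_{L^2(\Omega)}}{(e,\Psi)_{L^2(\Omega)}}\,e.
\]
By bilinearity of the scalar product, $(\overline e_j,\Psi)_{L^2(\Omega)}=(\widehat e,\Psi)_{L^2(\Omega)}-(\widehat e,\Psi)_{L^2(\Omega)}=0$. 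Moreover $\overline e_j\ne0$, because $e$ and $\widehat e$ are linearly independent, so a nontrivial combination of them cannot vanish. Finally, since $\clE_j$ is a vector space and $\overline e_j$ is a linear combination of the elements $e,\widehat e\in\clE_j$, we have $\overline e_j\in\clE_j$, i.e. $\clA\overline e_j=\widetilde\alpha_j\overline e_j$; thus $\overline e_j$ is an eigenfunction associated to $\widetilde\alpha_j$ with the desired orthogonality property.

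Alternatively, and even more briefly, one can phrase it as: the restriction of the bounded linear functional $\Lambda\colon L^2(\Omega)\to\bbR$, $\Lambda v\coloneqq(v,\Psi)_{L^2(\Omega)}$, to the subspace $\clE_j$ is a linear map from a space of dimension $\ge2$ into $\bbR$, hence has nontrivial kernel; any nonzero element of $\Ker(\Lambda\rest{\clE_j})$ serves as $\overline e_j$. I do not anticipate any real obstacle here: the statement is a one-line dimension-counting argument, and the only thing to be slightly careful about is to note explicitly that the chosen combination is nonzero (using linear independence) and still lies in the eigenspace (using that the eigenspace is a linear subspace).
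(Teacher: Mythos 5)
Your proof is correct and follows essentially the same route as the paper: both arguments pick two linearly independent eigenfunctions in the (at least two-dimensional) eigenspace and form the explicit linear combination that annihilates the functional $v\mapsto(v,\Psi)_{L^2(\Omega)}$, your $\widehat e-\tfrac{(\widehat e,\Psi)}{(e,\Psi)}e$ being just a rescaling of the paper's $\beta_{j+1}\widetilde e_j-\beta_j\widetilde e_{j+1}$. Your explicit remarks that the combination is nonzero and stays in the eigenspace are welcome details that the paper leaves implicit.
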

\begin{proof}
If~$\widetilde \alpha_j$ is a nonsimple eigenvalue, we can assume
that~$\widetilde \alpha_j=\widetilde \alpha_{j+1}$. Then,
in case~$(\widetilde e_j,\Psi)_{L^2(\Omega)}=0$
or~$(\widetilde e_{j+1},\Psi)_{L^2(\Omega)}=0$ the proof is finished.
It remains to consider the case~$(\widetilde e_j,\Psi)_{L^2(\Omega}\eqqcolon\beta_j\ne0
\ne\beta_{j+1}\coloneqq(\widetilde e_{j+1},\Psi)_{L^2(\Omega}$. In this case we simply
take the eigenfunction~$\overline e_j\coloneqq \beta_{j+1}\widetilde e_j-\beta_j\widetilde e_{j+1}$ which satisfies
$(\overline e_j,\Psi)_{L^2(\Omega)}=\beta_{j+1}\beta_j-\beta_j\beta_{j+1}=0$.
\end{proof}

\begin{proof}[Proof of Proposition~\ref{P:NOTstabil1Act}]
 We take the eigenfunction given by Lemma~\ref{L:act-eigf} as initial condition, $y_0\coloneqq\overline e_j$,
 $(\overline e_j,\Psi)_{L^2(\Omega}=0$.
By decomposing the solution into orthogonal components~$y=q+Q$,
 with~$q\in\linspan\{\overline e_j\}$ and~$Q\in\{\overline e_j\}^\perp$, we obtain
 \begin{subequations}\label{sys-y-axy-decfail}
 \begin{align}
 &\tfrac{\p}{\p t} q(t,x)+\clA q(t,x)=0,&&q(0)=\overline e_j,\\
 &\tfrac{\p}{\p t} Q(t,x)+\clA Q(t,x)=u(t)\Psi,&&Q(0)=0,\\
 &\clG q\rest\Gamma=0=\clG Q\rest\Gamma.
 \end{align}
 \end{subequations}
  Observe that the dynamics of the component~$q$ is independent of~$u$, and such component is then given by
 $q(t,\Bigcdot)=\rme^{-\widetilde \alpha_j t}\overline e_j$, $t>0$.
 Now, for the norm of the entire state we obtain
 \[
  \norm{y(t,\Bigcdot)}{L^2(\Omega)}^2=\norm{q(t,\Bigcdot)}{L^2(\Omega)}^2+\norm{Q(t,\Bigcdot)}{L^2(\Omega)}^2\ge
 \rme^{-2\widetilde \alpha_jt} \norm{\overline e_j}{L^2(\Omega)}^2,\qquad t>0,
 \]
for every magnitude control~$u$. 
\end{proof}

\begin{proof}[Proof of Proposition~\ref{P:stabil1Act}]
Let $j_0\coloneqq\min\{j\in\bbN\mid \widetilde \alpha_j>0\}$. If~$j_0=1$ then
 the free dynamics is exponentially stable.
 If $j_0>1$, then we consider the dynamics onto the linear span of the
 first~$j_0-1$ eigenfunctions
 \[
  \clE_{j_0-1}\coloneqq\linspan\{\widetilde e_j\mid 1\le j\le j_0-1\},\qquad q(t)
  \coloneqq P_{\clE_{j_0-1}}y(t)\in\clE_{j_0-1}
 \]
where~$P_{\clE_{j_0-1}}\in\clL(L^2(\Omega),\clE_{j_0-1})$ denotes the
orthogonal projection in~$L^2(\Omega)$ onto the subspace~$\clE_{j_0-1}$.
We decompose the system as
 \begin{subequations}\label{sys-y-axy-decsucc}
 \begin{align}
 &\tfrac{\p}{\p t} q(t,x)+\clA q(t,x)
 =u(t)P_{\clE_{j_0-1}}\Psi,&&q(0)=P_{\clE_{j_0-1}}y_0,\label{sys-y-axy-decsucc-q}\\
 &\tfrac{\p}{\p t} Q(t,x)+\clA Q(t,x)
 =u(t)P_{\clE_{j_0-1}^\perp}\Psi,&&Q(0)=P_{\clE_{j_0-1}^\perp}y_0,\\
 &\clG q\rest\Gamma=0=\clG Q\rest\Gamma,
 \end{align}
 \end{subequations}
 with~$Q=y-q=P_{\clE_{j_0-1}^\perp}y$.
 Next we prove that the finite dimensional system~\eqref{sys-y-axy-decsucc-q} is null controllable. Writing
 \[
  q=\textstyle\sum\limits_{k=1}^{j_0-1}q_k\widetilde e_k,\qquad \overline q=\begin{bmatrix}
                                                                  q_1\\q_2\\\vdots\\q_{j_0-1}
                                                                 \end{bmatrix}
 \]
we obtain the system
\begin{equation}\label{sys-Kalman}
 \dot{\overline q}=A\overline q+Bu,
\end{equation}
with~$A\in\bbR^{(j_0-1)\times(j_0-1)}$ and~$B\in\bbR^{(j_0-1)\times1}$ as follows
\begin{align}
A={\rm diag}\left(\widetilde \alpha_1,\widetilde \alpha_2,\dots,\widetilde \alpha_{j_0-1}
                   \right)
 \quad\mbox{and}\quad
                   B=\begin{bmatrix}
                   (\widetilde e_1,\Psi)\\(\widetilde e_2,\Psi)\\\vdots\\(\widetilde e_{j_0-1},\Psi)
                   \end{bmatrix}.\notag
\end{align}
The matrix~$A$ is diagonal with entries~$A_{(i,i)}=\widetilde \alpha_i$.

For any given~$T>0$, we have that system~\eqref{sys-Kalman} is controllable at time~$T$. Indeed,
this follows from Kalman rank condition (see, e.g., \cite[Sect~1.3, Thm.~1.2]{Zabczyk92}), because we have that
\[
 \det(\begin{bmatrix}A\mid B \end{bmatrix})=\left(\bigtimes_{k=1}^{j_0-1}(\widetilde e_k,\Psi)\right)
 \det \clV
\]
where
\[\begin{bmatrix}A\mid B \end{bmatrix}\coloneqq\begin{bmatrix}B &AB& \dots& A^{j_0-2}B\end{bmatrix}\]
and
$\clV$ is the Vandermonde matrix whose entries are
\[
 \clV_{(i,j)}=\widetilde \alpha_i^{j-1}.
\]
Hence
\[\det(\begin{bmatrix}A\mid B \end{bmatrix})
=\left(\bigtimes_{k=1}^{j_0-1}(\widetilde e_k,\Psi)\right)
\left(\bigtimes_{1\le i<j\le j_0-1}(\widetilde \alpha_j-\widetilde \alpha_i)\right)\ne0.
\]

Therefore, we can choose a control~$u$ such that~$\overline q(T,\Bigcdot)=0$, which implies $q(T,\Bigcdot)=0$. Then, we take
the concatenated
control defined as:
$u^c(t)$ if~$t\in[0,T]$, and~$u^c(t)=0$ for $t>T$. For time~$t\ge T$ we have that~$q(t,\Bigcdot)=0$ and
\[
  \norm{y(t,\Bigcdot)}{L^2(\Omega)}^2=\norm{Q(t,\Bigcdot)}{L^2(\Omega)}^2=
 \rme^{-\widetilde \alpha_{j_0}(t-T)} \norm{Q(T,\Bigcdot)}{L^2(\Omega)},\qquad t\ge T.
 \]
 Since, by definition of~$j_0$, we have that~$\widetilde \alpha_{j_0}>0$, it follows that~$\norm{y(t,\Bigcdot)}{L^2(\Omega)}^2$
 converges exponentially to zero, as~$t\to+\infty$. That is, $u^c$ is a stabilizing (open-loop) control.
\end{proof}

 The proofs of Propositions~\ref{P:NOTstabil1Act} and~\ref{P:stabil1Act}
 above are quite intuitive for the considered concrete system. Shorter proofs would follow from applications of the Fattorini criterium for stability that we find for example in~\cite{BadTakah14}.
At this point we would like to refer the reader also to the work by Hautus in~\cite[Sect.~3]{Hautus70} for further comments on controllability and stabilizability of more general finite-dimensional autonomous systems (including both continuous-time and discrete-time cases). See also~\cite[Sect.~6.5]{TucsnakWeiss09} for a version of the Hautus test for the observability of linear  abstract  infinite-dimensional systems (here recall also the duality between observability and controllability).

\subsection{Proof of Proposition~\ref{P:denseBall}.}\label{sS:proofP:denseBall}
Let us fix an arbitrary~$\delta>0$ and let~$h\in \fkS_H$ be in the unit ball of~$H$.
Since~$X$ is dense in~$H$, we can choose
 $\overline h\in X\setminus\{0\}$ such that~$\norm{\overline h-h}{H}\le\frac{\delta}{2}$. Now,
 for~$\widetilde h\coloneqq\norm{\overline h}{H}^{-1}\overline h\in \fkS_H$ we find
 \begin{align}
 \norm{\widetilde h-h}{H}
 &\le\norm{\norm{\overline h}{H}^{-1}\overline h-\overline h}{H}+\norm{\overline h-h}{H}
 \le\norm{\norm{\overline h}{H}^{-1}-1}{\bbR}\norm{\overline h}{H}+\tfrac{\delta}{2}\notag\\
 &=\norm{\overline h}{H}^{-1}\norm{1-\norm{\overline h}{H}}{\bbR}\norm{\overline h}{H}+\tfrac{\delta}{2}
 =\norm{\norm{h}{H}-\norm{\overline h}{H}}{\bbR}+\tfrac{\delta}{2}
\le\norm{h-\overline h}{H}+\tfrac{\delta}{2}=\delta.\notag
 \end{align}
Hence we can conclude that~$X\bigcap \fkS_H$ is dense in~$\fkS_H$.
\qed

\subsection{Proof of Proposition~\ref{P:cont.tswitch-L2}}\label{sS:proofP:cont.tswitch-L2}
We start by defining
\[
 \overline t_j\coloneqq\max\{\tau_j,\sigma_j\}, \qquad \underline t_j\coloneqq\min\{\tau_j,\sigma_j\},\qquad 0\le j\le K,
\]
and by writing, with~$g\coloneqq f_\tau-f_\sigma$,
\begin{equation}\label{swiL2-dec}
\norm{f_\tau-f_\sigma}{L^2((a,b),X)}^2=\sum_{j=1}^{K}\int_{\overline t_{j-1}}^{\overline t_{j}} \norm{f_\tau(t)-f_\sigma(t)}{ X}^2\,\ed t
=\sum_{j=1}^{K}\int_{\overline t_{j-1}}^{\overline t_{j}} \norm{g(t)}{ X}^2\,\ed t.
\end{equation}

We proceed by Induction.  Firstly, we find that
\begin{align}
 \int_{a}^{\overline t_{1}}\norm{g(t)}{X}^2\,\ed t
&=\int_{\overline t_{0}}^{\overline t_{1}}\norm{g(t)}{X}^2\,\ed t
=\int_{\underline t_{1}}^{\overline t_{1}} \norm{g(t)}{X}^2\,\ed t
\le\clR\clX^2, \label{swiL2-ind-base}
\end{align}
where in the last inequality we used the fact that  $g(t)=f_\tau(t)-f_\sigma(t)=\phi_{1}-\phi_{1}=0$
for~$ t \in [\overline   t_{0},\underline t_{1})$.

Next, we assume that for a given~$i\in\{1,2,\dots,K-1\}$ we have
\begin{equation}\label{swiL2-ind-hypo}
\int_{a}^{\overline t_{i}}\norm{g(t)}{X}^2\,\ed t\le i\clR\clX^2. \tag{$\fkH$}
\end{equation}
Then we obtain
\begin{align}
 \int_{a}^{\overline t_{i+1}}\norm{g(t)}{X}^2\,\ed t
 &\le i\clR\clX^2
 +\int_{\overline t_{i}}^{\overline t_{i+1}}\norm{g(t)}{X}^2\,\ed t,\notag
 \end{align}
 which implies that
 \begin{align}
 \int_{a}^{\overline t_{i+1}}\norm{g(t)}{X}^2\,\ed t
 &\le i\clR\clX^2
 +\int_{\overline t_{i}}^{\underline t_{i+1}}\norm{g(t)}{X}^2\,\ed t
 +\int_{\underline t_{i+1}}^{\overline t_{i+1}}\norm{g(t)}{X}^2\,\ed t,\quad\mbox{if}\quad
 \overline t_{i}<\underline t_{i+1}\le\overline t_{i+1},\notag
 \intertext{and}
 \int_{a}^{\overline t_{i+1}}\norm{g(t)}{X}^2\,\ed t
 &\le i\clR\clX^2
  +\int_{\underline t_{i+1}}^{\overline t_{i+1}}\norm{g(t)}{X}^2\,\ed t,\quad\mbox{if}\quad
 \underline t_{i+1}\le\overline t_{i}\le\overline t_{i+1},\notag
 \end{align}

Observe that, if~$\overline t_{i}<\underline t_{i+1}$, then $g(t)=f_\tau(t)-f_\sigma(t)=\phi_{i+1}-\phi_{i+1}=0$,
for $t\in(\overline t_{i},\underline t_{i+1})\subseteq(\tau_i,\tau_{i+1})\bigcap(\sigma_i,\sigma_{i+1})$. Therefore, in either case we have
\begin{align}\label{swiL2-ind-thesis}
 \int_{a}^{\overline t_{i+1}}\norm{g(t)}{X}^2\,\ed t
 &\le i\clR\clX^2
  +\int_{\underline t_{i+1}}^{\overline t_{i+1}}\norm{g(t)}{X}^2\,\ed t\le i\clR\clX^2+\clR\clX^2=(i+1)\clR\clX^2.\tag{$\fkT$}
 \end{align}
Hence, assumption~\eqref{swiL2-ind-hypo} implies~\eqref{swiL2-ind-thesis}, which together with~\eqref{swiL2-ind-base} imply,
by Induction,
\begin{equation}\label{switch-L2-end}
\int_{a}^{\overline t_{j}}\norm{g(t)}{X}^2\,\ed t\le j\clR\clX^2,\quad\mbox{for all}\quad j\in\{1,2,\dots,K\}.
\end{equation}
In particular, since~$\overline t_{K}=b$, for~$j=K$ we obtain~$\norm{f_\tau-f_\sigma}{L^2((a,b),X)}\le K^\frac{1}{2}\clR^\frac{1}{2}\clX$.
\qed

\bigskip\noindent
\textbf{Acknowledgments.} K. Kunisch was supported in part by the ERC advanced grant 668998 (OCLOC) under the EU’s H2020 research program. S. Rodrigues gratefully acknowledges partial support from the Austrian Science Fund (FWF): P 33432-NBL.


\end{document}